\numberwithin{equation}{section}
\theoremstyle{plain}
\newtheorem{theorem}{Theorem}[section]
\newtheorem{lemma}[theorem]{Lemma}
\newtheorem{proposition}[theorem]{Proposition}
\newtheorem{corollary}[theorem]{Corollary}
\newtheorem{conjecture}[theorem]{Conjecture}
\theoremstyle{definition}
\newtheorem{question}[theorem]{Question}
\begin{document}

\title[Centralizing Traces and Lie Triple Isomorphisms]
{Centralizing Traces and Lie Triple Isomorphisms on Triangular
Algebras}

\author{Xinfeng Liang, Zhankui Xiao and Feng Wei}

\address{Liang: School of Mathematical Sciences, Huaqiao University,
Quanzhou, Fujian, 362021, P. R. China}

\email{lxfrd@hqu.edu.cn}

\address{Xiao: School of Mathematical Sciences, Huaqiao University,
Quanzhou, Fujian, 362021, P. R. China}

\email{zhkxiao@gmail.com}

\address{Wei: School of Mathematics, Beijing
Institute of Technology, Beijing, 100081, P. R. China}

\email{daoshuo@hotmail.com}

\begin{abstract}
Let $\mathcal{T}$ be a triangular algebra over a commutative ring
$\mathcal{R}$ and $\mathcal{Z(T)}$ be the center of $\mathcal{T}$.
Suppose that ${\mathfrak q}\colon \mathcal{T}\times
\mathcal{T}\longrightarrow \mathcal{T}$ is an $\mathcal{R}$-bilinear
mapping and that ${\mathfrak T}_{\mathfrak q}\colon:
\mathcal{T}\longrightarrow \mathcal{T}$ is a trace of
$\mathfrak{q}$. We describe the form of ${\mathfrak T}_{\mathfrak
q}$ satisfying the condition $[{\mathfrak T}_{\mathfrak
q}(T), T]\in \mathcal{Z(T)}$ for all
$T\in \mathcal{T}$. The question of when ${\mathfrak T}_{\mathfrak
q}$ has the proper form will be addressed. Using the aforementioned
trace function, we establish sufficient conditions for each Lie
triple isomorphism on $\mathcal{T}$ to be almost standard. As
applications we characterize Lie triple isomorphisms of triangular
matrix algebras and nest algebras. Some further research topics related to
current work are proposed at the end of this article.
\end{abstract}

\subjclass[2000]{47L35, 15A78, 16W25}

\keywords{Centralizing trace, Lie triple isomorphism, commuting
trace, triangular algebra, nest algebra}

\thanks{The work of the second author is supported by the Mathematical Tianyuan Fundamental of NSFC (Grant No. 11226068).}

\maketitle

\section{Introduction}
\label{xxsec1}

Let $\mathcal{R}$ be a commutative ring with identity, $\mathcal{A}$
be a unital algebra over $\mathcal{R}$ and $\mathcal{Z(A)}$ be the
center of $\mathcal{A}$. Let us denote the commutator or the Lie
product of the elements $a, b\in \mathcal{A}$ by $[a, b]=ab-ba$.
Recall that an $\mathcal{R}$-linear mapping ${\mathfrak f}:
\mathcal{A}\longrightarrow \mathcal{A}$ is said to be
\textit{semi-centralizing} if either $[{\mathfrak f}(a), a]\in
\mathcal{Z(A)}$ or ${\mathfrak f}(a)a+a {\mathfrak f}(a)\in
\mathcal{Z(A)}$ for all $a\in \mathcal{A}$. Further, the mapping
$\mathfrak{f}$ is said to be \textit{centralizing} if $[{\mathfrak
f}(a), a]\in \mathcal{Z(A)}$ for all $a\in \mathcal{A}$. The mapping
$\mathfrak{f}$ is said to be \textit{skew-centralizing} if
${\mathfrak f}(a)a+a{\mathfrak f}(a)\in \mathcal{Z(A)}$ for all
$a\in \mathcal{A}$. In particular, the mapping $\mathfrak{f}$ is
said to be \textit{commuting} if $[{\mathfrak f}(a), a]=0$ for all
$a\in \mathcal{A}$. The mapping $\mathfrak{f}$ is said to be
\textit{skew-commuting} if ${\mathfrak f}(a)a+a{\mathfrak f}(a)=0$
for all $a\in \mathcal{A}$. When we investigate the above-mentioned
mappings, the principal task is to describe their forms. This is
demonstrated by various works, see \cite{BenkovicEremita1,
Bresar1, Bresar2, Bresar3, Bresar4, BresarSemrl, Cheung1, Cheung2,
LeeWongLinWang, LiWei, MarcouxSourour1, Mayne, Posner, Semrl,
Sourour, XiaoWei1, XiaoWei2}. We encourage the reader to read the
well-written survey paper \cite{Bresar4}, in which the author
presented the development of the theory of semi-centralizing
mappings and their applications in details.

Let $\mathcal{R}$ be a commutative ring with identity, $\mathcal{A}$
be a unital algebra over $\mathcal{R}$ and $\mathcal{Z(A)}$ be the
center of $\mathcal{A}$. Recall that an $\mathcal{R}$-linear mapping
${\mathfrak f}: \mathcal{A}\longrightarrow \mathcal{A}$ is said to
be \textit{centralizing} if $[{\mathfrak f}(a), a]\in
\mathcal{Z(A)}$ for all $a\in \mathcal{A}$. Let $n$ be a positive
integer and $\mathfrak{q}\colon \mathcal{A}^n\longrightarrow
\mathcal{A}$ be an $n$-linear mapping. The mapping ${\mathfrak
T}_{\mathfrak q}\colon \mathcal{A}\longrightarrow \mathcal{A}$
defined by ${\mathfrak T}_{\mathfrak q}(a)={\mathfrak q}(a, a,
\cdots, a)$ is called a \textit{trace} of ${\mathfrak q}$. We say
that a centralizing trace ${\mathfrak T}_{\mathfrak q}$ is
\textit{proper} if it can be written as
$$
{\mathfrak T}_{\mathfrak q}(a)=z a^n+\mu_1(a)a^{n-1}+\cdots+\mu_{n-1}(a)a+\mu_n(a)
$$
for all $a\in \mathcal{A}$, where $z\in \mathcal{Z(A)}$ and $\mu_i\ (1\leq i\leq n)$ is a mapping from $\mathcal{A}$ into
$\mathcal{Z(A)}$ and every $\mu_i\ (1\leq i\leq n)$ is in fact a trace
of an $i$-linear mapping ${\mathfrak q}_i$ from $\mathcal{A}^i$ into
$\mathcal{Z(A)}$. Let $n=1$ and ${\mathfrak f}\colon \mathcal{A}\longrightarrow
\mathcal{A}$ be an $\mathcal{R}$-linear mapping. In this case, an
arbitrary trace ${\mathfrak T}_{\mathfrak f}$ of ${\mathfrak f}$
exactly equals to itself. Moreover, if a centralizing trace
${\mathfrak T}_{\mathfrak f}$ of ${\mathfrak f}$ is proper, then it
has the form
$$
{\mathfrak T}_{\mathfrak f}(a)\equiv z a \hspace{4pt} {\rm
mod}\hspace{2pt}\mathcal{Z(A)}, \hspace{8pt} \forall a\in
\mathcal{A},
$$
where $z\in \mathcal{Z(A)}$. Let us see the case of $n=2$. Suppose
that ${\mathfrak g}\colon \mathcal{A}\times
\mathcal{A}\longrightarrow \mathcal{A}$ is an $\mathcal{R}$-bilinear
mapping. If a centralizing trace ${\mathfrak T}_{\mathfrak g}$ of
${\mathfrak g}$ is proper, then it is of the form
$$
{\mathfrak T}_{\mathfrak g}(a)\equiv z a^2+\mu(a)a \hspace{4pt} {\rm
mod}\hspace{2pt}\mathcal{Z(A)}, \hspace{8pt} \forall a\in A,
$$
where $z\in \mathcal{Z(A)}$ and $\mu$ is an $\mathcal{R}$-linear
mapping from $\mathcal{A}$ into $\mathcal{Z(A)}$. It was Bre\v{s}ar who
initiated the study of commuting traces and centralizing traces of
bilinear mappings in his series of works \cite{Bresar1, Bresar2,
Bresar3, Bresar4, BresarSemrl}, where he investigated the structure
of commuting traces and centralizing traces of (bi-)linear mappings
on prime rings. It has turned out that in certain rings, in
particular, prime rings of characteristic different from $2$ and
$3$, every centralizing trace of a biadditive mapping is commuting.
Moreover, every centralizing mapping of a prime ring of
characteristic not $2$ is of the proper form and is actually
commuting. Lee et al further generalized Bre\v{s}ar's results by
showing that each commuting trace of an arbitrary multilinear
mapping on a prime ring also has the proper form
\cite{LeeWongLinWang}.

Cheung in \cite{Cheung2} studied commuting mappings of triangular
algebras (e.g., of upper triangular matrix algebras and nest
algebras). He determined the class of triangular algebras for which
every commuting mapping is proper. Xiao and Wei \cite{XiaoWei1}
extended Cheung's result to the generalized matrix algebra case.
Motivated by the
results of Bre\v{s}ar and Cheung, Benkovi\v{c} and Eremita
\cite{BenkovicEremita1} considered commuting traces of bilinear
mappings on a triangular algebra $
\left[\smallmatrix A & M\\
O & B \endsmallmatrix \right]$. They gave conditions under which
every commuting trace of a triangular algebra $
\left[\smallmatrix A & M\\
O & B \endsmallmatrix \right]$ is proper. In view of the above
works, it is natural and necessary to characterize centralizing
traces of (multi-)linear mappings on triangular algebras. One of the
main aims of this article is to provide a sufficient condition for
each centralizing trace of an arbitrary bilinear mapping on a
triangular algebra $
\left[\smallmatrix A & M\\
O & B \endsmallmatrix \right]$ to be proper.

Another important purpose of this article is to address the Lie
triple isomorphisms problem of triangular algebras. At his 1961 AMS
Hour Talk, Herstein proposed many problems concerning the structure
of Jordan and Lie mappings in associative simple and prime rings
\cite{Herstein}. The renowned Herstein's Lie-type mappings research
program was formulated since then. The involved Lie mappings mainly
include Lie isomorphisms, Lie triple isomorphisms, Lie derivations
and Lie triple derivations et al. Given a commutative ring
$\mathcal{R}$ with identity and two associative
$\mathcal{R}$-algebras $\mathcal{A}$ and $\mathcal{B}$, one define a
\textit{Lie triple isomorphism} from $\mathcal{A}$ into
$\mathcal{B}$ to be an $\mathcal{R}$-linear bijective mapping
${\mathfrak l}$ satisfying the condition
$$
{\mathfrak l}([[a, b],c])=[[{\mathfrak l}(a), {\mathfrak
l}(b)],{\mathfrak l}(c)] \hspace{8pt} \forall a, b, c\in
\mathcal{A}.
$$
For example, an isomorphism or a negative of an anti-isomorphism of
one algebra onto another is also a Lie isomorphism. Furthermore,
every Lie isomorphism and every Jordan isomorphism are Lie triple isomorphisms. One can ask
whether the converse is true in some special cases. That is, does
every Lie triple isomorphism between certain associative algebras
arise from isomorphisms and anti-isomorphisms in the sense of modulo
mappings whose range is central ? Recall that a Lie isomorphism
$\mathfrak{l}\colon A\longrightarrow B$ is \textit{standard} if
$$
\mathfrak{l}=\mathfrak{m}+\mathfrak{n}, \eqno(\clubsuit)
$$
where $\mathfrak{m}$ is an isomorphism or the negative of an anti-isomorphism from
$\mathcal{A}$ onto $\mathcal{B}$ and $\mathfrak{n}: \mathcal{A} \longrightarrow \mathcal{Z(B)}$ is an
$\mathcal{R}$-linear mapping annihilating all commutators.
We say that a Lie triple isomorphism
$\mathfrak{l}\colon A\longrightarrow B$ is \textit{standard} if
$$
\mathfrak{l}=\pm\mathfrak{m}+\mathfrak{n}, \eqno(\spadesuit)
$$
where $\mathfrak{m}$ is an isomorphism or an anti-isomorphism from
$\mathcal{A}$ onto $\mathcal{B}$ and $\mathfrak{n}: \mathcal{A} \longrightarrow \mathcal{Z(B)}$ is an
$\mathcal{R}$-linear mapping annihilating all second commutators.

The resolution of Herstein's Lie isomorphisms problem in matrix
algebra background has been well-known for a long time. Hua
\cite{Hua} proved that every Lie automorphism of the full matrix
algebra $\mathcal{M}_n(\mathcal{D})(n\geq 3)$ over a division ring
$\mathcal{D}$ is of the standard form $(\clubsuit)$. This result was
extended to the nonlinear case by Dolinar \cite{Dolinar} and was
further refined by \v{S}emrl \cite{Semrl}. Dokovi\'{c}
\cite{Dokovic} showed that every Lie automorphism of upper
triangular matrix algebras $\mathcal{T}_n(\mathcal{R})$ over a
commutative ring $\mathcal{R}$ without nontrivial idempotents has
the standard form as well. Marcoux and Sourour
\cite{MarcouxSourour1} classified the linear mappings preserving
commutativity in both directions (i.e., $[x,y] = 0$ if and only if
$[\mathfrak{f}(x), \mathfrak{f}(y)]=0$) on upper triangular matrix
algebras $\mathcal{T}_n(\mathbb{F})$ over a field $\mathbb{F}$. Such
a mapping is either the sum of an algebra automorphism of
$\mathcal{T}_n(\mathbb{F})$ (which is inner) and a mapping into the
center $\mathbb{F}I$, or the sum of the negative of an algebra
anti-automorphism and a mapping into the center $\mathbb{F}I$. The
classification of the Lie automorphisms of
$\mathcal{T}_n(\mathbb{F})$ is obtained as a consequence.
Benkovi\v{c} and Eremita \cite{BenkovicEremita1} applied
the theory of commuting traces to study the Lie isomorphisms on a
triangular algebra. They provided sufficient conditions
under which every commuting trace of triangular algebra $
\left[\smallmatrix A & M\\
O & B \endsmallmatrix \right]$ is proper. It also turns out that under some mild
assumptions, each Lie isomorphism of $
\left[\smallmatrix A & M\\
O & B \endsmallmatrix \right]$ has the standard form $(\clubsuit)$.
Calder\'{o}n Mart\'{i}n and Mart\'{i}n Gonz\'{a}lez observed that
every Lie triple isomorphism of the full matrix algebra
$\mathcal{M}_n(\mathbb{C})$ over the complex field $\mathbb{C}$ is of
the standard form $(\spadesuit)$ \cite{CalderonGonzalez3}.
Simultaneously, Lie triple isomorphisms between rings and between
(non-)self-adjoint operator algebras have received a fair amount of
attentions. The involved rings
and operator algebras include (semi-)prime rings, the algebra of
bounded linear operators, $C^\ast$-algebras, von Neumann algebras,
$H^\ast$-algebras, nest algebras, reflexive algebras and so on, see \cite{CalderonGonzalez1,
CalderonGonzalez2, CalderonGonzalez3, CalderonHaralampidou, Lu1, Lu2,
MarcouxSourour2, Mathieu, Miers1, Miers2, Miers3, QiHou1, QiHou2,
Semrl, Sourour, WangLu, YuLu, ZhangZhang}.

This is the second paper in a series of three that we are planning on
this topic. The first paper was dedicated to studying, in more
details, commuting traces and Lie isomorphisms on generalized matrix
algebras \cite{XiaoWei2}. This article is organized as following.
Section $2$ contains the definition of triangular algebra and some
classical examples. In Section $3$ we provide sufficient conditions
for each centralizing trace of arbitrary bilinear mappings on a
triangular algebra $
\left[\smallmatrix A & M\\
O & B \endsmallmatrix \right]$ to be proper (Theorem
\ref{xxsec3.4}). And then we apply this result to describe the
centralizing traces of bilinear mappings on certain classical
triangular algebras. In Section $4$ we will give sufficient
conditions under which every Lie triple isomorphism from a
triangular algebra into another one has the almost standard form (Theorem
\ref{xxsec4.4}). As corollaries of Theorem \ref{xxsec4.4},
characterizations of Lie triple isomorphisms on several kinds of
triangular algebras are obtained. The last section contains some
potential future research topics related to our current work.

\section{Preliminaries}\label{xxsec2}

Let $\mathcal{R}$ be a commutative ring with identity. Let $A$ and $B$
be unital algebras over $\mathcal{R}$. Recall that an $(A, B)$-bimodule $M$ is
\textit{loyal} if $aMb=0$ implies that $a=0$ or $b=0$ for any $a\in
A, b\in B$. Clearly, each loyal $(A, B)$-bimodule $M$ is faithful as
a left $A$-module and also as a right $B$-module.

Let $A, B$ be unital associative algebras over $\mathcal{R}$ and $M$
be a unital $(A,B)$-bimodule, which is faithful as a left $A$-module
and also as a right $B$-module. We denote the {\em triangular algebra}
consisting of $A, B$ and $M$ by
$$
\mathcal{T}=\left[
\begin{array}
[c]{cc}%
A & M\\
0 & B\\
\end{array}
\right] .
$$
Then $\mathcal{T}$ is an associative and noncommutative
$\mathcal{R}$-algebra. The center $\mathcal{Z(T)}$ of $\mathcal{T}$ is (see \cite[Proposition 3]{Cheung2})
$$
\mathcal{Z(T)}=\left\{ \left[
\begin{array}
[c]{cc}%
a & 0\\
0 & b
\end{array}
\right] \vline \hspace{3pt} am=mb,\ \forall\ m\in M \right\}.
$$
Let us define two natural $\mathcal{R}$-linear projections
$\pi_A:\mathcal{T}\rightarrow A$ and $\pi_B:\mathcal{T}\rightarrow
B$ by
$$
\pi_A: \left[
\begin{array}
[c]{cc}%
a & m\\
0 & b\\
\end{array}
\right] \longmapsto a \quad \text{and} \quad \pi_B: \left[
\begin{array}
[c]{cc}%
a & m\\
0 & b\\
\end{array}
\right] \longmapsto b.
$$
It is easy to see that $\pi_A \left(\mathcal{Z(T)}\right)$ is a
subalgebra of ${\mathcal Z}(A)$ and that $\pi_B(\mathcal{Z(T)})$ is
a subalgebra of ${\mathcal Z}(B)$. Furthermore, there exists a
unique algebraic isomorphism $\tau\colon
\pi_A(\mathcal{Z(T)})\longrightarrow \pi_B(\mathcal{Z(T)})$ such
that $am=m\tau(a)$ for all $a\in \pi_A(\mathcal{Z(T)})$ and for all
$m\in M$.

Let $1$ (resp. $1^\prime$) be the identity of the algebra $A$ (resp.
$B$), and let $I$ be the identity of the triangular algebra
$\mathcal{T}$. We will use the following notations:
$$
P=\left[
\begin{array}
[c]{cc}%
1 & 0\\
0 & 0\\
\end{array}
\right], \hspace{8pt} Q=I-P=\left[
\begin{array}
[c]{cc}%
0 & 0\\
0 & 1^\prime\\
\end{array}
\right]
$$
and
$$
\mathcal{T}_{11}=P{\mathcal T}P, \hspace{6pt}
\mathcal{T}_{12}=P{\mathcal T}Q, \hspace{6pt}
\mathcal{T}_{22}=Q{\mathcal T}Q.
$$
Thus the triangular algebra $\mathcal{T}$ can be written as
$$
\mathcal{T}=P{\mathcal T}P+P{\mathcal T}Q+Q{\mathcal T}Q
=\mathcal{T}_{11}+\mathcal{T}_{12}+\mathcal{T}_{22}.
$$
$\mathcal{T}_{11}$ and $\mathcal{T}_{22}$ are subalgebras of
$\mathcal{T}$ which are isomorphic to $A$ and $B$, respectively.
$\mathcal{T}_{12}$ is a $(\mathcal{T}_{11},
\mathcal{T}_{22})$-bimodule which is isomorphic to the $(A,
B)$-bimodule $M$. It should be remarked that $\pi_A(\mathcal{Z(T)})$
and $\pi_B(\mathcal{Z(T)})$ are isomorphic to $P\mathcal{Z(T)}P$ and
$Q\mathcal{Z(T)}Q$, respectively. Then there is an algebra
isomorphism $\tau\colon P\mathcal{Z(T)}P\longrightarrow
Q\mathcal{Z(T)}Q$ such that $am=m\tau(a)$ for all $m\in
P\mathcal{T}Q$.

Let us list some classical examples of triangular algebras and
matrix algebras which will be revisited in the sequel (Section
\ref{xxsec3}, Section \ref{xxsec4} and Section\ref{xxsec5}). Since
these examples have already been presented in many papers, we just
state their titles without any introduction. We refer the reader to
\cite{BenkovicEremita1, LiWei, XiaoWei1} for more details.
\begin{enumerate}
\item[(\rm a)] Upper and lower triangular matrix algebras;
\item[(\rm b)] Block upper and lower triangular matrix algebras;
\item[(\rm c)] Hilbert space nest algebras;
\item[(\rm d)] Full matrix algebras;
\item[(\rm e)] Inflated algebras.
\end{enumerate}

\section{Centralizing Traces of Triangular Algebras}
\label{xxsec3}

In this section we will establish sufficient conditions for each
commuting trace of arbitrary bilinear mappings on a triangular
algebra $
\left[\smallmatrix A & M\\
O & B \endsmallmatrix \right]$ to be proper (Theorem
\ref{xxsec3.4}). Consequently, we are able to describe centralizing
traces of bilinear mappings on upper triangular matrix algebras and nest algebras. The most
important fact is that Theorem \ref{xxsec3.4} will be used to
characterize Lie triple isomorphisms from a triangular algebra into
another in Section \ref{xxsec4}.

We now list some basic facts related to triangular algebras, which
can be found in \cite[Section 2]{BenkovicEremita1}.

\begin{lemma}\label{xxsec3.1}
Let $M$ be a loyal $(A,B)$-bimodule and let $f, g\colon M
\rightarrow A$ be arbitrary mappings. Suppose $f(m)n+g(n)m=0$ for
all $m, n\in M$. If $B$ is noncommutative, then $f=g=0$.
\end{lemma}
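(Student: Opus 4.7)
The plan is to exploit the bimodule structure by substituting scaled variables into the identity $f(m)n+g(n)m=0$ and then to leverage loyalty of $M$ together with noncommutativity of $B$ to collapse everything. The noncommutativity of $B$ suggests that the useful object to produce is something of the form $aM[b,b']=0$ with $a$ built from $f$ or $g$, since $M$ being loyal will then force $a=0$.

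First I would replace $m$ by $mb$ (which lies in $M$ since $M$ is a right $B$-module). This gives $f(mb)n+g(n)(mb)=0$, and by the bimodule associativity $g(n)(mb)=(g(n)m)b$. Substituting $g(n)m=-f(m)n$ from the original identity yields the key intermediate relation
\begin{equation*}
f(mb)\,n=f(m)\,nb\qquad \text{for all } m,n\in M,\ b\in B.
\end{equation*}
Next I would apply this relation in two different ways to the expression $f(mb)(nb')$ with $b'\in B$: on the one hand, using the relation with $n$ replaced by $nb'$, it equals $f(m)(nb'b)$; on the other hand, pulling $b'$ outside and then applying the relation gives $f(m)\,nbb'$. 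Comparing the two evaluations forces $f(m)\,n\,(bb'-b'b)=0$, that is $f(m)\,M\,[b,b']=0$ for all $m\in M$ and all $b,b'\in B$.

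Now I would invoke loyalty: since $f(m)\,M\,[b,b']=0$, loyalty of the $(A,B)$-bimodule $M$ says that either $f(m)=0$ or $[b,b']=0$. Noncommutativity of $B$ supplies some pair $b,b'$ with $[b,b']\neq 0$, and I conclude $f(m)=0$ for every $m\in M$, i.e.\ $f=0$. Feeding this back into the original identity gives $g(n)m=0$ for all $m,n$, whence $g(n)M=0$; one more application of loyalty (using that $M$ is faithful as a left $A$-module, which is built into loyalty) yields $g=0$.

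The main obstacle I expect is the ``two-way'' application step that produces the commutator $[b,b']$: one must be careful to correctly distinguish the role of $b$ (the inner substitution that built the relation) from that of $b'$ (the outer substitution), and to use bimodule associativity to shuffle the scalars past $f(m)$. Everything else is bookkeeping and a direct appeal to Lemma-free properties already established, so once the commutator relation $f(m)M[b,b']=0$ is in hand the conclusion is immediate from loyalty.
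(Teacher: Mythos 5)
Your proof is correct, and it is essentially the argument behind this lemma in the cited source \cite[Section 2]{BenkovicEremita1} (the paper itself only quotes the statement): substitute $mb$ to get $f(mb)n=f(m)nb$, evaluate $f(mb)(nb')$ two ways to obtain $f(m)M[b,b']=0$, and finish with loyalty and the faithfulness it implies. All steps, including the fact that no additivity of $f,g$ is ever needed, check out.
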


\begin{lemma}\label{xxsec3.2}
Let $\mathcal{T}=\left[\smallmatrix A & M\\
O & B \endsmallmatrix \right]$ be a triangular algebra with a loyal $(A,B)$-bimodule $M$,
$\lambda\in \pi_B(\mathcal{Z(T)})$ and $b\in B$ be a nonzero
element. If $\lambda b=0$, then $\lambda=0$
\end{lemma}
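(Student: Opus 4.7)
The plan is to use the fact that $\pi_B(\mathcal{Z(T)})$ is not an arbitrary subset of $B$: every element of it is the image under the canonical isomorphism $\tau$ of some element of $\pi_A(\mathcal{Z(T)})$, and these two pieces are linked by the bimodule action $am = m\tau(a)$ on $M$. So I would first pull $\lambda$ back to the left-hand side via this isomorphism, then exploit loyalty of $M$ to annihilate the pulled-back element, and finally convert that back into information about $\lambda$ using faithfulness.

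More concretely, write $\lambda = \tau(\mu)$ for the unique $\mu \in \pi_A(\mathcal{Z(T)})$, so that $\mu m = m \lambda$ for every $m \in M$. Right-multiplying this identity by $b$ and using the hypothesis $\lambda b = 0$ yields
\[
\mu m b \;=\; m\lambda b \;=\; 0 \qquad \text{for all } m \in M,
\]
that is, $\mu M b = 0$. Since $M$ is loyal and $b \neq 0$, this forces $\mu = 0$. Feeding this back into $\mu m = m\lambda$ gives $M\lambda = 0$, and since a loyal bimodule is in particular faithful as a right $B$-module (as recorded in the paragraph just before the lemma), we conclude $\lambda = 0$.

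I do not anticipate a real obstacle here: the only subtlety is recognizing that centrality of $\lambda$ in the bottom corner must be used through the pairing $\mu m = m\lambda$, since loyalty is a statement about the sandwich $aMb$ rather than about $B$ acting alone on $M$. Once that pairing is in hand, loyalty handles one side of the factorization and faithfulness handles the other.
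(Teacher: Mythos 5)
Your argument is correct and is essentially the standard one the paper implicitly relies on (it cites Benkovi\v{c}--Eremita for this fact rather than proving it): pull $\lambda$ back to $\mu=\tau^{-1}(\lambda)$, deduce $\mu Mb=0$ from $\mu m b=m\lambda b=0$, apply loyalty to kill $\mu$, and then use faithfulness of $M$ as a right $B$-module to conclude $\lambda=0$. No gaps.
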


\begin{lemma}\label{xxsec3.3}
Let $\mathcal{T}=\left[\smallmatrix A & M\\
O & B \endsmallmatrix \right]$ be a triangular algebra with a loyal $(A,B)$-bimodule $M$.
Then the center $\mathcal{Z(T)}$ of $\mathcal{T}$ is a domain.
\end{lemma}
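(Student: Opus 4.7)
The plan is to exploit the explicit description of $\mathcal{Z(\mathcal{T})}$ given above together with the loyalty hypothesis on $M$. Any central element of $\mathcal{T}$ has the diagonal form $\left[\smallmatrix a & 0\\ 0 & b\endsmallmatrix\right]$ with $am=mb$ for every $m\in M$, so $\mathcal{Z(\mathcal{T})}$ is automatically a commutative $\mathcal{R}$-subalgebra of $\mathcal{T}$, and I only need to rule out nonzero zero-divisors.

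Accordingly, I would take two central elements $z_i=\left[\smallmatrix a_i & 0\\ 0 & b_i\endsmallmatrix\right]$ with $z_1z_2=0$ and aim to show $z_1=0$ or $z_2=0$. From $z_1z_2=0$ we obtain $a_1a_2=0$ and $b_1b_2=0$. The key trick is to insert an arbitrary $m\in M$: using the compatibility conditions $a_2m=mb_2$ one gets
\[
0=a_1a_2m=a_1mb_2\qquad\text{for all }m\in M,
\]
i.e.\ $a_1Mb_2=0$. Loyalty of $M$ then forces $a_1=0$ or $b_2=0$.

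To close the argument I would convert whichever vanishing I obtain into the vanishing of the whole central element, again via loyalty. If $a_1=0$, then $mb_1=a_1m=0$ for all $m$, so $1_A\cdot M\cdot b_1=0$ and loyalty gives $b_1=0$ (since $1_A\neq 0$), whence $z_1=0$. Symmetrically, if $b_2=0$, then $a_2m=mb_2=0$ for all $m$, so $a_2\cdot M\cdot 1_B=0$ and loyalty gives $a_2=0$, whence $z_2=0$. This completes the proof.

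No real obstacle is anticipated here: the statement is essentially a direct consequence of the loyalty definition, and the only small technical point is remembering to translate a vanishing on one side of the diagonal into the vanishing on the other via the compatibility relation $am=mb$ and a second application of loyalty.
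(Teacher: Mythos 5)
Your proof is correct, and it is essentially the standard argument: the paper does not prove this lemma itself but cites it from Benkovi\v{c}--Eremita, where the proof is exactly your computation $0=a_1a_2m=a_1mb_2$ followed by two applications of loyalty. The only point worth noting is that the final step uses that $1_A\neq 0$ (resp.\ $1_B\neq 0$), which holds under the standing assumption that $A$ and $B$ are nonzero unital algebras.
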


We are in position to state the main theorem of this section.

\begin{theorem}\label{xxsec3.4}
Let $\mathcal{T}=\left[\smallmatrix A & M\\
O & B \endsmallmatrix \right]$ be a $2$-torsion free triangular algebra over the commutative
ring $\mathcal{R}$ and ${\mathfrak q}\colon \mathcal{T}\times
\mathcal{T}\longrightarrow \mathcal{T}$ be an $\mathcal{R}$-bilinear
mapping. If
\begin{enumerate}
\item[(1)] each commuting linear mapping on $A$ or $B$ is proper,
\item[(2)] $\pi_A(\mathcal{Z(T)})={\mathcal Z}(A) \neq A $ and
$\pi_B(\mathcal{Z(T)})={\mathcal Z}(B)\neq B$,
\item[(3)] $M$ is loyal,
\end{enumerate}
then every centralizing trace ${\mathfrak T}_{\mathfrak q}:
\mathcal{T}\longrightarrow \mathcal{T}$ of ${\mathfrak q}$ is
proper.
\end{theorem}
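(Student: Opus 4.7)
The plan is to adapt the Benkovi\v{c}--Eremita analysis of commuting traces on triangular algebras, with an extra polarization step to extract the ``central obstruction'' present in the centralizing (rather than commuting) hypothesis. Throughout, I write $T=x+m+y$ with $x\in\mathcal{T}_{11}$, $m\in\mathcal{T}_{12}$, $y\in\mathcal{T}_{22}$, and decompose every value of $\mathfrak{q}$ via the Peirce decomposition. First I would polarize $[\mathfrak{T}_{\mathfrak{q}}(T),T]\in\mathcal{Z(T)}$ by substituting $T\mapsto T+\lambda I$ and comparing the parts of $\lambda$-degree $1$ and $2$ (using $2$-torsion-freeness and evaluation at $\lambda=\pm 1$ to separate them): this gives $[\mathfrak{q}(I,I),T]\in\mathcal{Z(T)}$ and $[\mathfrak{q}(T,I)+\mathfrak{q}(I,T),T]\in\mathcal{Z(T)}$ for every $T$. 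Specializing the first with $T=P$ (where $\mathcal{Z(T)}$ is supported on the diagonal blocks) forces the $\mathcal{T}_{12}$-component of $\mathfrak{q}(I,I)$ to vanish, providing the prospective central leading coefficient $z\in\mathcal{Z(T)}$.

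Next I would feed the pure Peirce corners $T=x$, $T=y$, and the off-diagonal element $T=m$ into the centralizing identity to read off blockwise constraints. At $T=x\in\mathcal{T}_{11}$ the $\mathcal{T}_{12}$-block of $[\mathfrak{T}_{\mathfrak{q}}(x),x]$ vanishes automatically, while the nondiagonal entries of the centralizing element force, via loyalty of $M$ (Lemma~\ref{xxsec3.1}, Lemma~\ref{xxsec3.2}) and the identification $\pi_A(\mathcal{Z(T)})=\mathcal{Z}(A)$ from hypothesis~(2), that the ``$A$-corner trace'' $x\mapsto P\mathfrak{q}(x,x)P$ is a commuting trace on $A$ modulo $\mathcal{Z}(A)$. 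A further polarization of this corner trace (fixing one slot of $\mathfrak{q}$ and letting the other vary) extracts a genuinely linear commuting map on $A$, to which hypothesis~(1) applies to yield the proper form $za^2+\mu(a)a+\nu(a)$ on the $A$-corner; the hypothesis $\mathcal{Z}(A)\neq A$ is what rules out the degenerate case in which the extracted map is central. The $B$-corner is handled symmetrically.

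The remaining mixed Peirce components of $\mathfrak{q}$ are then extracted by substituting $T=x+m$, $T=m+y$, and finally the generic $T=x+m+y$, using hypotheses~(2) and~(3) together with the noncommutativity of $A$ and $B$ implied by $\mathcal{Z}(A)\neq A$ and $\mathcal{Z}(B)\neq B$ to cancel off-diagonal coupling terms via Lemma~\ref{xxsec3.1}. The main obstacle I anticipate is this final assembly step: after obtaining partial formulas on each corner, one must glue them so that the many off-diagonal cross-terms organize into a single contribution $\mu(T)T$ modulo $\mathcal{Z(T)}$, rather than producing non-central residues. Here the loyalty of $M$ and the domain property of $\mathcal{Z(T)}$ (Lemma~\ref{xxsec3.3}) are crucial for cancelling stray terms, while $\pi_A(\mathcal{Z(T)})=\mathcal{Z}(A)$ and $\pi_B(\mathcal{Z(T)})=\mathcal{Z}(B)$ are needed to lift the scalars produced corner by corner to genuinely central elements of $\mathcal{T}$ compatible with the linking isomorphism $\tau\colon P\mathcal{Z(T)}P\to Q\mathcal{Z(T)}Q$.
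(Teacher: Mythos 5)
Your overall strategy differs from the paper's: the paper first shows that under hypotheses (1)--(3) every \emph{centralizing} trace is in fact \emph{commuting} (this is the content of Lemmas \ref{xxsec3.5}--\ref{xxsec3.10}) and then simply invokes the known result that commuting traces on such triangular algebras are proper (\cite[Theorem 3.1]{BenkovicEremita1}); you instead try to assemble the proper form directly, corner by corner. That route is not impossible in principle, but as written it has two genuine gaps. First, hypothesis (1) concerns commuting \emph{linear} mappings on $A$ or $B$, not commuting traces of bilinear mappings. Your claim that polarizing the corner trace $x\mapsto P\mathfrak{q}(x,x)P$ ``extracts a genuinely linear commuting map on $A$'' is false: linearizing $[f(a,a),a]=0$ yields $[f(a,b)+f(b,a),a]+[f(a,a),b]=0$, which does not make $a\mapsto f(a,b)+f(b,a)$ commuting for fixed $b$. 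Hence you cannot deduce the proper form $za^2+\mu(a)a+\nu(a)$ on the $A$-corner from hypothesis (1); the theorem deliberately does not assume that commuting traces of bilinear maps on $A$ or $B$ are proper. The paper applies hypothesis (1) only to the honestly linear maps $a_1\mapsto f_{13}(a_1,a_3)$ and $a_2\mapsto g_{23}(a_2,a_3)$ with $a_3$ fixed (Lemma \ref{xxsec3.8}).

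Second, and more seriously, the entire difficulty of the theorem is concentrated in the mixed components, where centralizing genuinely differs from commuting: the centralizing identity only gives, for instance, $[g_{11}(a_1,a_1),a_2]=\tau([f_{12}(a_1,a_2),a_1])\in\mathcal{Z}(B)$ and $[f_{22}(a_2,a_2),a_1]\in\mathcal{Z}(A)$, and these central values must be shown to vanish. The paper achieves this through a long chain of substitutions (relations (3.13)--(3.30)) culminating in the identity $a_1[f_{22}(a_2,a_2),a_1]a_3=a_3[a_2,g_{11}(a_1,a_1)]a_2$, which exhibits a diagonal element of $\mathcal{Z(T)}$; commuting with an arbitrary $b_2\in B$ and invoking loyalty via Lemma \ref{xxsec3.2} then forces $[g_{11}(a_1,a_1),a_2]=0$, and similarly for $f_{22}$, $f_{23}$, $g_{13}$ (Lemmas \ref{xxsec3.9} and \ref{xxsec3.10}). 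Your proposal relegates exactly this to a ``final assembly step'' and asserts that loyalty and the domain property of $\mathcal{Z(T)}$ will ``cancel stray terms'' without indicating how. Since this upgrade from centralizing to commuting is the theorem's actual content, the proposal as it stands does not contain a proof.
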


For convenience, let us write $A_{1}={A}$, $A_{2}=B$ and $A_{3}=M$.
We denote the unity of $A_{1}$ by $1$ and the unity of $A_{2}$ by
$1'$. Suppose that $\mathfrak{T_q}$ is an arbitrary trace of the
$\mathcal{R}$-bilinear mapping $\mathfrak{q}$. Then there exist
bilinear mappings $f_{ij}\colon A_{i}\times A_{j} \rightarrow
A_{1}$, $g_{ij}\colon A_{i}\times A_{j} \rightarrow A_{2}$ and
$h_{ij}\colon A_{i}\times A_{j} \rightarrow A_{3}$ ($1\leqslant
i\leqslant j\leqslant 3$) such that
$$
\mathfrak{T_q}\colon \left[\begin{array}{cc}
a_{1} & a_{3}\\
 & a_{2}\end{array}\right]\mapsto
 \left[\begin{array}{cc}
F(a_{1},a_{2},a_{3}) & H(a_{1},a_{2},a_{3})\\
 & G(a_{1},a_{2},a_{3})\end{array}\right],
$$
where
$$
F(a_{1},a_{2},a_{3})=\sum_{1\leqslant i\leqslant j\leqslant 3}f_{ij}(a_{i},a_{j}),
$$
$$
G(a_{1},a_{2},a_{3})=\sum_{1\leqslant i\leqslant j\leqslant 3}g_{ij}(a_{i},a_{j}),
$$
$$
H(a_{1},a_{2},a_{3})=\sum_{1\leqslant i\leqslant j\leqslant 3}h_{ij}(a_{i},a_{j}).
$$
Since $\mathfrak{T_q}$ is centralizing, we have
$$
\left[\left[
\begin{array}[c]{cc}%
F & H\\
& G\end{array}\right],\left[
\begin{array}[c]{cc}%
a_{1} & a_{3}\\
& a_{2}\end{array}
\right]\right]=\left[\begin{array}[c]{cc}%
[F,a_{1}] & Fa_{3}+Ha_{2}-a_{1}H-a_{3}G\\
& [G,a_{2}]\end{array}\right]\in \mathcal{Z(T)}. \eqno(3.1)
$$

Now we divide the proof of Theorem \ref{xxsec3.4} into a series of
lemmas for comfortable reading.

\begin{lemma}\label{xxsec3.5}
Let $K: A_{2}\times A_{2}\rightarrow A_{3}$ (resp. $K: A_{1}\times
A_{1}\rightarrow A_{3}$) be an $\mathcal{R}$-bilinear mapping. If
$K(x,x)x=0$ {\rm (}resp. $xK(x,x)=0$ {\rm )} for all $x\in A_2$ {\rm
(}resp. for all $x\in A_1${\rm )}, then $K(x,x)=0$.
\end{lemma}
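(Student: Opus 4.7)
The plan is to polarize the identity $K(x,x)x=0$ twice, solve the resulting symmetric bilinear equation by specializing the identity element of $A_2$, and use the hypothesis once more to pin down a single constant in $M$. Write $S(x):=K(x,x)$ and let $B(x,y):=K(x,y)+K(y,x)$ be the associated symmetric bilinear polar, so that $S(x+y)=S(x)+B(x,y)+S(y)$.

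First I would apply the standard polarization trick: replacing $x$ by $x+y$ and then by $x-y$ in $S(x)x=0$, and combining the two resulting identities via the $2$-torsion free hypothesis to separate the degrees, I expect to extract
$$
S(x)y=-B(x,y)x,\qquad \forall\,x,y\in A_2.
$$
Linearizing this identity once more in $x$ (substituting $x\mapsto x+z$ and canceling the original equation for $x$ and for $z$) should produce a cocycle-type relation
$$
B(x,z)y+B(x,y)z+B(y,z)x=0,\qquad \forall\,x,y,z\in A_2.
$$
Setting $z=1'$ lets me solve for $B$: writing $\phi(y):=B(1',y)\in M$, which is automatically $\mathcal{R}$-linear, one finds $B(x,y)=-\phi(x)y-\phi(y)x$. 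Specializing $x=1'$ in this formula produces the compatibility relation $2\phi(y)=-\phi(1')y$ for every $y\in A_2$.

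To finish I would plug back into $S(x)=-B(x,1')x$. Expanding and using $\phi(1')x=-2\phi(x)$ collapses the expression to $S(x)=-\phi(x)x$. The original hypothesis $S(x)x=0$ then reads $\phi(x)x^2=0$. Multiplying $2\phi(x)=-\phi(1')x$ on the right by $x^2$ turns this into $\phi(1')x^3=0$ for all $x\in A_2$, and taking $x=1'$ (so $x^3=1'$) forces $\phi(1')=0$. Consequently $2\phi\equiv 0$, and $2$-torsion freeness gives $\phi\equiv 0$, whence $S(x)=K(x,x)=0$. The ``resp.''\ case for $K\colon A_1\times A_1\to A_3$ with $xK(x,x)=0$ is entirely parallel, with left multiplication and the identity $1$ of $A_1$ replacing right multiplication and $1'$.

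The main difficulty I anticipate is not any one calculation but organizing the two rounds of polarization cleanly enough that the problem collapses onto the single element $\phi(1')\in M$; once that is in hand, its vanishing comes almost for free by substituting $x=1'$ into the resulting cubic identity. It is worth noting that loyalty of $M$ plays no role in this lemma; only the $2$-torsion-free hypothesis on $\mathcal{T}$ and the presence of a unit in $A_2$ (resp.\ $A_1$) are used.
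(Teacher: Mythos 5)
Your proof is correct, and every step checks out: the two-stage polarization, the cocycle identity $B(x,z)y+B(x,y)z+B(y,z)x=0$, the formula $B(x,y)=-\phi(x)y-\phi(y)x$, and the final cubic relation $\phi(1')x^{3}=0$ all follow as you indicate; in particular, routing the endgame through $\phi(1')x^{3}=0$ evaluated at $x=1'$ is a good choice, since the more obvious specialization $y=1'$ in $2\phi(y)=-\phi(1')y$ only yields $3\phi(1')=0$ and would require $3$-torsion-freeness. That said, the paper reaches the conclusion in four lines by polarizing directly at the unit: substituting $x+1'$ and $x-1'$ into $K(x,x)x=0$ gives $K(x,x)=-\bigl(K(1',x)+K(x,1')\bigr)(1'+x)$ and $K(x,x)=\bigl(K(1',x)+K(x,1')\bigr)(1'-x)$, whence $2\bigl(K(1',x)+K(x,1')\bigr)=0$ and then $K(x,x)=0$. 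Everything in your argument after the first polarization is recovered by simply taking $y=1'$ in $S(x)y=-B(x,y)x$, so the cocycle relation and the explicit solution for $B$ are more machinery than the lemma needs, though they do no harm. Your closing remark --- that loyalty of $M$ plays no role and that only the unit of $A_2$ (resp.\ $A_1$) together with $2$-torsion-freeness is used --- matches the paper's proof exactly.
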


\begin{proof}
Setting $x=1'$, we obtain that $K(1',1')=0$. Replacing $x$ by $x+1'$
in $K(x,x)x=0$, we get
$$
K(x,x)=-(K(1',x)+K(x,1'))(1'+x). \eqno(3.2)
$$
Substituting $x-1'$ for $x$ in $K(x,x)x=0$, we arrive at
$$
K(x,x)=(K(1',x)+K(x,1'))(1'-x).  \eqno(3.3)
$$
Combining the above two relations gives $K(1',x)+K(x,1')=0$. Thus
$K(x,x)=0$.
\end{proof}

\begin{lemma}\label{xxsec3.6}
$H(a_{1},a_{2},a_{3})=h_{13}(a_{1},a_{3})+h_{23}(a_{2},a_{3})+
h_{33}(a_{3},a_{3})$.
\end{lemma}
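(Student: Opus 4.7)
The plan is to exploit the centralizing condition $(3.1)$, which forces the commutator $[{\mathfrak T}_{\mathfrak q}(T), T]$ to lie in $\mathcal{Z(T)}$ and therefore to have zero $(1,2)$-block, since central elements of $\mathcal{T}$ are block-diagonal. This yields the off-diagonal identity
$$F a_{3} + H a_{2} - a_{1} H - a_{3} G = 0, \qquad (\ast)$$
and I will use $(\ast)$ to eliminate, one by one, the three summands $h_{11}(a_{1},a_{1})$, $h_{22}(a_{2},a_{2})$, and $h_{12}(a_{1},a_{2})$ that are not allowed to appear in $H$.

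First, setting $a_{3} = 0$ in $(\ast)$ kills the terms involving $F$ and $G$ and reduces the identity to $H(a_{1}, a_{2}, 0)\, a_{2} = a_{1}\, H(a_{1}, a_{2}, 0)$, where $H(a_{1}, a_{2}, 0) = h_{11}(a_{1},a_{1}) + h_{12}(a_{1},a_{2}) + h_{22}(a_{2},a_{2})$. Specializing further to $a_{2}=0$ gives $a_{1} h_{11}(a_{1}, a_{1}) = 0$ for every $a_{1}\in A_{1}$, and Lemma~\ref{xxsec3.5} in its $A_{1}$ version forces $h_{11}(a_{1}, a_{1}) = 0$. Symmetrically, taking $a_{1}=0$ gives $h_{22}(a_{2},a_{2})\, a_{2} = 0$ for every $a_{2}\in A_{2}$, and Lemma~\ref{xxsec3.5} in its $A_{2}$ version forces $h_{22}(a_{2},a_{2}) = 0$.

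What survives after these reductions is the bilinear identity
$$h_{12}(a_{1}, a_{2})\, a_{2} = a_{1}\, h_{12}(a_{1}, a_{2}) \qquad (a_{1}\in A_{1},\ a_{2}\in A_{2}),$$
and this is the step I expect to be the main obstacle, since neither Lemma~\ref{xxsec3.5} nor a loyalty-type argument annihilates $h_{12}$ directly: the multipliers $a_{1}$ on the right and $a_{2}$ on the left vary independently. My plan is to polarize by substituting $a_{1}\mapsto a_{1}+1$, expanding by bilinearity, and subtracting the original identity; combined with the auxiliary relation $h_{12}(1, a_{2})\, a_{2} = h_{12}(1, a_{2})$ obtained by first putting $a_{1}=1$, this should yield
$$h_{12}(a_{1}, a_{2}) = -\,a_{1}\, h_{12}(1, a_{2}).$$
Setting $a_{1}=1$ in this last relation gives $2\, h_{12}(1,a_{2}) = 0$, and the $2$-torsion freeness of $\mathcal{T}$ then forces $h_{12}(1,a_{2}) = 0$, so that $h_{12}\equiv 0$. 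Together with the vanishing of $h_{11}$ and $h_{22}$, this is exactly the decomposition of $H$ claimed in the lemma.
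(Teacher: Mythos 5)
Your argument is correct and follows essentially the same route as the paper: extract the $(1,2)$-block identity $(3.4)$ from the centralizing condition, kill $h_{11}$ and $h_{22}$ via Lemma \ref{xxsec3.5}, and then eliminate $h_{12}$ by polarizing with the identity element and using $2$-torsion freeness. The only (cosmetic) difference is in the last step: the paper first substitutes $-a_1$ for $a_1$ to obtain $a_1h_{12}(a_1,a_2)=0$ and then shifts $a_1\mapsto a_1+1$, whereas you shift $a_1\mapsto a_1+1$ directly and finish by setting $a_1=1$; both computations check out.
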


\begin{proof}
It follows from the matrix relation $(3.1)$ that
$$
Fa_{3}+Ha_{2}-a_{1}H-a_{3}G=0. \eqno(3.4)
$$
Let us take $a_{1}=0$ and $a_{2}=0$ into $(3.4)$. Then $(3.1)$
implies that
$$
f_{33}(a_{3},a_{3})a_{3}=a_{3}g_{33}(a_{3},a_{3}) \eqno(3.5)
$$
for all $a_3\in A_3$. Let us choose $a_1=0$ and $a_3=0$ in $(3.4)$.
Then $0=Ha_{2}=h_{22}(a_{2}, a_{2})a_{2}$ for all $a_{2}\in A_{2}$.
In view of Lemma \ref{xxsec3.5}, we have $h_{22}(a_{2},a_{2})=0$.
Similarly, putting $a_{2}=0$ and $a_{3}=0$ in $(3.4)$ yields
$h_{11}(a_{1},a_{1})=0$ for all $a_{1}\in A_{1}$. Furthermore,
setting $a_3=0$ in $(3.4)$, we see that
$$
(h_{12}(a_1,a_2)a_2-a_1h_{12}(a_1,a_2))=0
$$
for all $a_1\in A_1, a_2\in A_2$. Replacing $a_1$ by $-a_1$ in the
above relation and comparing the obtained two relations gives
$a_1h_{12}(a_1,a_2)=0$ for all $a_1\in A_1, a_2\in A_2$. In
particular, $h_{12}(1,a_2)=0$ for all $a_2\in A_2$. Substituting
$a_1+1$ for $a_1$ in $a_1h_{12}(a_1,a_2)=0$ leads to
$h_{12}(a_1,a_2)=0$ for all $a_1\in A_1, a_2\in A_2$. Therefore
$$
H(a_{1},a_{2},a_{3})=h_{13}(a_{1},a_{3})+h_{23}(a_{2},a_{3})+ h_{33}(a_{3},a_{3})
$$
for all $a_1\in A_1, a_2\in A_2, a_3\in A_3$.
\end{proof}

\begin{lemma}\label{xxsec3.7}
With notations as above, we have
\begin{enumerate}
\item[(1)]
$a_1\mapsto f_{11}(a_1,a_1)$ is a commuting trace,\\
$a_1\mapsto f_{13}(a_1,a_3)$ is a commuting linear mapping for each $a_3\in A_3$,\\
$a_2\mapsto g_{22}(a_2,a_2)$ is a commuting trace,\\
$a_2\mapsto g_{23}(a_2,a_3)$ is a commuting linear mapping for each
$a_3\in A_3$,
\item[(2)]
$[g_{11}(a_1,a_1),a_2]=\tau([f_{12}(a_{1},a_{2}),a_{1}])\in {\mathcal Z}(A_{2})$, \\
$[g_{12}(a_1,a_2),a_2]=\tau([f_{22}(a_{2},a_{2}),a_{1}])\in {\mathcal Z}(A_{2})$, \\
$[g_{13}(a_1,a_3),a_2]=\tau([f_{23}(a_{2},a_{3}),a_{1}])\in
{\mathcal Z}(A_{2})$,
\item[(3)]
$f_{33}(a_3, a_3)\in Z(A_{1})$ and $g_{33}(a_3, a_3)\in {\mathcal
Z}(A_{2})$.
\end{enumerate}
\end{lemma}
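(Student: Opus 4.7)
The plan is to evaluate the centralizing identity $(3.1)$ at elements $T\in\mathcal{T}$ whose blocks are systematically set to zero, combine this with the isomorphism $\tau\colon \pi_A(\mathcal{Z(T)})\to\pi_B(\mathcal{Z(T)})$, and then polarize in the remaining free variables. Two observations drive the whole argument. First, since $(3.1)$ lies in $\mathcal{Z(T)}$ and by hypothesis (2) we have $\pi_A(\mathcal{Z(T)})=\mathcal{Z}(A)$ and $\pi_B(\mathcal{Z(T)})=\mathcal{Z}(B)$, we necessarily have $\tau([F,a_1])=[G,a_2]$; in particular $[G,a_2]=0$ forces $[F,a_1]=0$ by injectivity of $\tau$. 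Second, the $2$-torsion-free hypothesis will let us recover a single diagonal trace $f_{ii}(x,x)$ from its symmetrization $f_{ii}(x,y)+f_{ii}(y,x)$ upon setting $y=x$.

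Specializing $(3.1)$ at $T$ with $a_2=a_3=0$ and applying this $\tau$-trick gives $[f_{11}(a_1,a_1),a_1]=0$; the symmetric choice $a_1=a_3=0$ gives $[g_{22}(a_2,a_2),a_2]=0$. These are the first and third assertions of~(1). Taking instead $a_2=0$ (respectively $a_1=0$) with all other variables free, and subtracting the two identities just obtained, yields the pivot relations
$$
[f_{13}(a_1,a_3)+f_{33}(a_3,a_3),a_1]=0 \quad \text{and} \quad [g_{23}(a_2,a_3)+g_{33}(a_3,a_3),a_2]=0.
$$
For (3), polarize the first relation in $a_3$: replacing $a_3$ by $a_3+a_3'$ and subtracting the instances at $a_3$ and $a_3'$, bilinearity kills all $f_{13}$-terms and leaves $[f_{33}(a_3,a_3')+f_{33}(a_3',a_3),a_1]=0$. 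Setting $a_3'=a_3$ and invoking $2$-torsion freeness yields $f_{33}(a_3,a_3)\in\mathcal{Z}(A_1)$; the analogous manipulation of the second pivot gives $g_{33}(a_3,a_3)\in\mathcal{Z}(A_2)$. Substituting (3) back into the pivot relations produces $[f_{13}(a_1,a_3),a_1]=0$ and $[g_{23}(a_2,a_3),a_2]=0$, completing~(1).

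For (2), evaluate $(3.1)$ at $T$ with $a_3=0$; here $H=0$ by Lemma \ref{xxsec3.6}, and using the commuting identities from (1) the constraint $\tau([F,a_1])=[G,a_2]$ reduces to
$$
\tau\bigl([f_{12}(a_1,a_2)+f_{22}(a_2,a_2),a_1]\bigr) = [g_{11}(a_1,a_1)+g_{12}(a_1,a_2),a_2].
$$
Polarizing in $a_2$, specializing $a_2'=a_2$, and dividing by $2$ isolates $\tau([f_{22}(a_2,a_2),a_1])=[g_{12}(a_1,a_2),a_2]$, the second equation of~(2); subtracting it from the displayed relation gives the first. Finally, evaluating at the fully general $T=\left[\smallmatrix a_1 & a_3\\ 0 & a_2 \endsmallmatrix\right]$ and using every identity proved so far to cancel terms on both sides of $\tau([F,a_1])=[G,a_2]$, one is left with exactly $\tau([f_{23}(a_2,a_3),a_1])=[g_{13}(a_1,a_3),a_2]$, the third equation of~(2).

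The main obstacle is the polarization step that isolates the pure $f_{33}$ and $f_{22}$ diagonal pieces: one must verify that bilinearity indeed annihilates every cross-term from $f_{13}$ (respectively $f_{12}$) so that the $2$-torsion-free hypothesis can be invoked on a single symmetric summand. The remainder of the argument is disciplined bookkeeping of the chain of identities.
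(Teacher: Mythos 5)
Your proposal is correct and follows essentially the same route as the paper: extract the diagonal constraint $\tau([F,a_1])=[G,a_2]$ from the centralizing condition $(3.1)$, specialize by setting blocks of $T$ to zero, and separate the individual summands by polarization together with $2$-torsion-freeness. The only cosmetic difference is that the paper performs the separation via the sign substitutions $a_3\mapsto -a_3$ and $a_1\mapsto -a_1$ (a parity decomposition) rather than your additive polarizations $a_3\mapsto a_3+a_3'$ and $a_2\mapsto a_2+a_2'$, which amount to the same thing.
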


\begin{proof}
By the relation $(3.1)$ we know that
$$
\tau([F,a_{1}])=[G,a_{2}]. \eqno(3.6)
$$
Let us take $a_1=0$ in $(3.6)$. Then
$$
[g_{22}(a_2,a_2)+g_{23}(a_2,a_3)+g_{33}(a_3,a_3),a_2]=0 \eqno(3.7)
$$
for all $a_2\in A_2, a_3\in A_3$. Replacing $a_3$ by $-a_3$ in
$(3.7)$ we get
$$
[g_{22}(a_2,a_2)+g_{33}(a_3,a_3),a_2]=0 \eqno(3.8)
$$
for all $a_2\in A_2, a_3\in A_3$. Putting $a_3=0$ in $(3.7)$ and
combining $(3.7)$ and $(3.8)$, we obtain
$$
[g_{22}(a_2,a_2),a_2]=0,\quad [g_{23}(a_2,a_3),a_2]=0,\quad [g_{33}(a_3,a_3),a_2]=0
$$
for all $a_2\in A_2, a_3\in A_3$. In a similar way, we have
$$
[f_{11}(a_1,a_1),a_1]=0,\quad [f_{33}(a_3,a_3),a_1]=0,\quad [f_{13}(a_1,a_3),a_1]=0.
$$

Setting $a_{3}=0$ in $(3.6)$, we arrive at
$$
\tau([f_{12}(a_1,a_2)+f_{22}(a_2,a_2),a_1])=[g_{11}(a_1,a_1)+g_{12}(a_1,a_2),a_2]
\eqno(3.9)
$$
for all $a_1\in A_1, a_2\in A_2$. Replacing $a_1$ by $-a_1$ in
$(3.9)$ and then comparing the obtained relation with $(3.9)$, we
get
$$
\tau([f_{22}(a_{2},a_{2}),a_{1}])=[g_{12}(a_{1},a_{2}),a_{2}]\in
{\mathcal Z}(A_{2})  \eqno(3.10)
$$
and
$$
\tau([f_{12}(a_{1},a_{2}),a_{1}])=[g_{11}(a_{1},a_{1}),a_{2}]\in
{\mathcal Z}(A_{2})  \eqno(3.11)
$$
for all $a_{1}\in A_{1}, a_{2}\in A_{2}$. In view of $(3.6), (3.10),
(3.11)$ we conclude
$$
\tau([f_{23}(a_{2},a_{3}),a_{1}])=[g_{13}(a_{1},a_{3}),a_{2}]\in
{\mathcal Z}(A_{2})
$$
for all $a_{1}\in A_{1}, a_{2}\in A_{2}, a_{3}\in A_{3}$.
\end{proof}

\begin{lemma}\label{xxsec3.8}
There exist a linear mapping $\xi: A_3\rightarrow {\mathcal Z}(A_2)$
and a bilinear mapping $\eta: A_2\times A_3\rightarrow {\mathcal
Z}(A_2)$ such that $g_{23}(a_2,a_3)=\xi(a_3)a_2+\eta(a_2,a_3)$.
\end{lemma}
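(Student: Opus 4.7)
The plan is to exploit Lemma~\ref{xxsec3.7}(1) together with hypothesis~(1) of Theorem~\ref{xxsec3.4}. For each fixed $a_3\in A_3$, the linear mapping $\phi_{a_3}\colon A_2\to A_2$ defined by $\phi_{a_3}(a_2)=g_{23}(a_2,a_3)$ is commuting. By hypothesis~(1), $\phi_{a_3}$ is proper, so there exists $z(a_3)\in\mathcal{Z}(A_2)$ and an $\mathcal{R}$-linear mapping $\nu_{a_3}\colon A_2\to\mathcal{Z}(A_2)$ with
$$
g_{23}(a_2,a_3)=z(a_3)\,a_2+\nu_{a_3}(a_2).
$$
The remaining task is to promote $z$ to a linear mapping $\xi\colon A_3\to\mathcal{Z}(A_2)$ in the variable $a_3$, after which the formula $\eta(a_2,a_3):=g_{23}(a_2,a_3)-\xi(a_3)a_2$ will automatically produce a bilinear $\eta$ with values in $\mathcal{Z}(A_2)$.

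The main obstacle is that the decomposition above is \emph{a priori} not unique, so additivity/scalar linearity of $g_{23}$ in $a_3$ does not directly transfer to $z$. To overcome this, I would track the defect: using bilinearity of $g_{23}$ in $a_3$, set
$$
c(a_3,a_3'):=z(a_3+a_3')-z(a_3)-z(a_3')\in\mathcal{Z}(A_2),
$$
and observe that $c(a_3,a_3')\,a_2\in\mathcal{Z}(A_2)$ for every $a_2\in A_2$. Since $c(a_3,a_3')$ is central, this forces $c(a_3,a_3')\,[a_2,b]=[\,c(a_3,a_3')a_2,\,b\,]=0$ for all $a_2,b\in A_2$. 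Now assumption~(2) guarantees $\mathcal{Z}(A_2)\neq A_2$, so $A_2$ is noncommutative and some commutator $[a_2,b]$ is nonzero; then Lemma~\ref{xxsec3.2} applied with $\lambda=c(a_3,a_3')\in\pi_B(\mathcal{Z(T)})$ and the nonzero element $[a_2,b]$ yields $c(a_3,a_3')=0$. The same argument with scalar multiplication yields $z(\alpha a_3)=\alpha z(a_3)$.

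Thus $\xi:=z$ is $\mathcal{R}$-linear. Setting $\eta(a_2,a_3):=g_{23}(a_2,a_3)-\xi(a_3)\,a_2=\nu_{a_3}(a_2)$, we have $\eta(a_2,a_3)\in\mathcal{Z}(A_2)$ by construction, linearity in $a_2$ comes from the proper form of $\phi_{a_3}$, and linearity in $a_3$ follows from the bilinearity of $g_{23}$ combined with the just-established linearity of $\xi$. This delivers the required decomposition, with the noncommutativity of $A_2$ (hypothesis~(2)) and the cancellation property of Lemma~\ref{xxsec3.2} doing the essential work.
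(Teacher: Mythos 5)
Your proposal is correct and follows essentially the same route as the paper: apply Lemma \ref{xxsec3.7}(1) and hypothesis (1) to get the proper form of $a_2\mapsto g_{23}(a_2,a_3)$ for each fixed $a_3$, then kill the additivity defect of the central coefficient by commuting with an element of $A_2$ and invoking the noncommutativity of $A_2$ together with Lemma \ref{xxsec3.2}. The only cosmetic difference is that you isolate the defect $c(a_3,a_3')$ explicitly before taking commutators, whereas the paper works directly with the identity $(\xi(a_3+b_3)-\xi(a_3)-\xi(b_3))a_2+\cdots=0$; the substance is identical.
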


\begin{proof}
Since $a_2\mapsto g_{23}(a_2,a_3)$ is a commuting linear mapping for
each $a_3\in A_3$, then by the hypothesis $(1)$ there exist mappings
$\xi: A_{3}\rightarrow {\mathcal Z}(A_2)$ and $\eta: A_2\times
A_3\rightarrow {\mathcal Z}(A_2)$ such that
$$
g_{23}(a_2,a_3)=\xi(a_3)a_{2}+\eta(a_2,a_3),
$$
where $\eta$ is $\mathcal{R}$-linear in the first argument. Let us
show that $\xi$ is $\mathcal{R}$-linear and $\eta$ is
$\mathcal{R}$-bilinear. Clearly,
$$
\begin{aligned}
g_{23}(a_2,a_3+b_3)&=\xi(a_3+b_3)a_{2}+\eta(a_2,a_3+b_3)\\
g_{23}(a_2,a_3)+g_{23}(a_2,b_3)&=\xi(a_3)a_2+\eta(a_2,a_3)+\xi(b_3)a_2+\eta(a_2,b_3)
\end{aligned}
$$
for all $a_2\in A_2, a_3, b_3\in A_3$. So
$$
\big(\xi(a_3+b_3)-\xi(a_3)-\xi(b_3)\big)a_2
+\eta(a_{2},a_{3}+b_{3})-\eta(a_{2},a_{3})-\eta(a_{2},b_{3})=0
$$
for all $a_2\in A_2, a_3, b_3\in A_3$. Note that $\xi$ and $\eta$
map into $Z(A_{2})$. Hence
$(\xi(a_{3}+b_{3})-\xi(a_{3})-\xi(b_{3}))[a_2,b_2]=0$ for all $a_2,
b_2\in  A_2$, and $a_3, b_3\in  A_3$. Note that $A_2$ is noncommutative. Applying Lemma \ref{xxsec3.2}
yields that $\xi$ is $\mathcal{R}$-linear mapping. Consequently,
$\eta$ is $\mathcal{R}$-linear in the second argument.
\end{proof}

\begin{lemma}\label{xxsec3.9}
$f_{23}(a_2,a_3)\in {\mathcal Z}(A_1)$ and $g_{13}(a_1,a_3)\in
{\mathcal Z}(A_2)$.
\end{lemma}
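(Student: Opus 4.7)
The plan is to reduce both conclusions of the lemma to the single assertion that $\sigma(a_1,a_2,a_3):=[f_{23}(a_2,a_3),a_1]$ vanishes identically, and then to deduce this via a Leibniz-type identity combined with the loyalty of $M$. By Lemma \ref{xxsec3.7}(2), $\sigma(a_1,a_2,a_3)\in\pi_A(\mathcal{Z(T)})=\mathcal{Z}(A_1)$ and $\tau(\sigma(a_1,a_2,a_3))=[g_{13}(a_1,a_3),a_2]$. Since $\tau$ is an isomorphism onto $\pi_B(\mathcal{Z(T)})=\mathcal{Z}(A_2)$, both conclusions of the lemma are equivalent to $\sigma\equiv 0$, so it suffices to establish this.

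The key observation is that Lemma \ref{xxsec3.7}(2) applies not only to $a_1\in A_1$ but to $a_1^{2}\in A_1$ as well. By the Leibniz rule for commutators and the centrality of $\sigma(a_1,a_2,a_3)$,
$$
\sigma(a_1^{2},a_2,a_3)=[f_{23}(a_2,a_3),a_1]\,a_1+a_1\,[f_{23}(a_2,a_3),a_1]=2\,a_1\,\sigma(a_1,a_2,a_3).
$$
Since the left-hand side lies in $\mathcal{Z}(A_1)$, so does $2\,a_1\,\sigma(a_1,a_2,a_3)$. Expanding $[b_1,2a_1\sigma(a_1,a_2,a_3)]=2[b_1,a_1]\,\sigma(a_1,a_2,a_3)$ (using $\sigma\in\mathcal{Z}(A_1)$) and invoking $2$-torsion freeness yields
$$
[b_1,a_1]\,\sigma(a_1,a_2,a_3)=0 \quad\text{in }A_1\text{ for every }b_1\in A_1.
$$

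Next, transfer this to the bimodule $M$: because $\sigma(a_1,a_2,a_3)\in\pi_A(\mathcal{Z(T)})$, we have $\sigma(a_1,a_2,a_3)\,m=m\,\tau(\sigma(a_1,a_2,a_3))$ for every $m\in M$, and multiplying the $A_1$-identity on the right by $m$ produces
$$
[b_1,a_1]\,M\,\tau(\sigma(a_1,a_2,a_3))=0.
$$
Suppose, for contradiction, that $\sigma(a_1,a_2,a_3)\ne 0$ for some triple; then $\tau(\sigma)\ne 0$ since $\tau$ is an isomorphism, and the loyalty of $M$ forces $[b_1,a_1]=0$ for every $b_1\in A_1$, i.e., $a_1\in\mathcal{Z}(A_1)$. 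But then $\sigma(a_1,a_2,a_3)=[f_{23}(a_2,a_3),a_1]=0$, contradicting our choice. Hence $\sigma\equiv 0$, which proves both halves of the lemma.

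The main subtlety is spotting that Lemma \ref{xxsec3.7}(2) can be applied at $a_1^{2}$, not merely at $a_1$, so that the Leibniz identity converts the centrality of $\sigma(a_1^{2},\cdot,\cdot)$ into the annihilation relation $[b_1,a_1]\,\sigma(a_1,a_2,a_3)=0$. Once that relation is in hand, loyalty of $M$ together with the self-annihilating nature of $\sigma$ at central $a_1$ completes the argument, without any further appeal to the working identity $(*)$ of the preceding lemmas or to Lemma \ref{xxsec3.8}.
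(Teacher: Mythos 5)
Your proof is correct, and it takes a genuinely different and far more economical route than the paper's. The paper proves this lemma by working through the $(1,2)$-entry of the centralizing condition: starting from $(3.4)$ with $a_1=0$ it derives the relations $(3.12)$--$(3.21)$, invokes Lemma \ref{xxsec3.8} (hence hypothesis (1) on commuting maps of $B$) and Lemma \ref{xxsec3.1} (hence noncommutativity of $B$), and introduces the auxiliary maps $X$, $Y$, $P$ before loyalty finally kills the obstruction. You instead use only the diagonal information already packaged in Lemma \ref{xxsec3.7}(2): the single fact that $\sigma(a_1,a_2,a_3)=[f_{23}(a_2,a_3),a_1]$ lies in $\pi_A(\mathcal{Z(T)})$ for \emph{every} $a_1$ — in particular for $a_1^2$ — which via Leibniz and $2$-torsion freeness gives $[b_1,a_1]\,\sigma=0$, and then loyalty (after pushing $\sigma$ across $M$ by $\tau$) forces $\sigma\equiv 0$. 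This is essentially the classical Posner-type argument that an element whose inner derivation maps into the center must be central, provided the center has no zero divisors against $M$; indeed, taking $b_1=f_{23}(a_2,a_3)$ in your annihilation relation gives $\sigma^2=0$ directly, and Lemma \ref{xxsec3.3} would finish even faster. Each step checks out: the membership $\sigma\in\pi_A(\mathcal{Z(T)})$ is implicit in the statement of Lemma \ref{xxsec3.7}(2) (it is what makes $\tau(\sigma)$ meaningful), and the final contradiction is airtight because $a_1\in\mathcal{Z}(A_1)$ trivially forces $\sigma=0$. The only thing the paper's longer computation buys that yours does not is the stock of intermediate identities $(3.13)$--$(3.14)$, which are recycled in the proof of Lemma \ref{xxsec3.10}; so if one adopted your proof, those identities would still need to be derived there.
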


\begin{proof}
By Lemma \ref{xxsec3.7} it is enough to prove $f_{23}(a_2,a_3)\in
{\mathcal Z}(A_1)$. Setting $a_1=0$ in $(3.4)$ and using $(3.5)$, we
obtain
$$
\begin{aligned}
&\big(f_{22}(a_2,a_2)+f_{23}(a_2,a_3)\big)a_3+\big(h_{33}(a_3,a_3)+h_{23}(a_2,a_3)\big)a_2\\
&\quad -a_3\big(g_{22}(a_2,a_2)+g_{23}(a_2,a_3)\big)=0
\end{aligned}\eqno(3.12)
$$
for all $a_2\in A_2, a_3\in A_3$. Replacing $a_2$ by $-a_2$ in the
equation $(3.12)$ and then comparing with it, we get
$$
h_{23}(a_{2},a_{3})a_{2}=a_{3}g_{22}(a_{2},a_{2})-f_{22}(a_{2},a_{2})a_{3}
\eqno(3.13)
$$
and
$$
h_{33}(a_{3},a_{3})a_{2}=a_{3}g_{23}(a_{2},a_{3})-f_{23}(a_{2},a_{3})a_{3}
\eqno(3.14)
$$
for all $a_2\in A_2, a_3\in A_3$. Note that
$[g_{23}(a_2,a_3),a_2]=0$ for all $a_2\in A_2, a_3\in  A_3$.
Replacing $a_2$ by $a_2+1'$ in $[g_{23}(a_2,a_3),a_2]=0$ gives
$g_{23}(1',a_3)\in {\mathcal Z}(A_2)$. On the other hand, Lemma
\ref{xxsec3.7} shows that $f_{23}(1',a_3)\in {\mathcal Z}(A_1)$ for
all $a_3\in A_3$. Taking $a_2=1'$ in $(3.14)$ we have
$$
h_{33}(a_3,a_3)=a_3\alpha(a_3), \eqno(3.15)
$$
where $\alpha(a_3)=g_{23}(1',a_3)-\tau(f_{23}(1',a_3))\in {\mathcal
Z}(A_2)$. It follows from $(3.14)$, $(3.15)$ and Lemma
\ref{xxsec3.8} that
$$
a_3(\alpha(a_3)-\xi(a_3))a_2=\big(\tau^{-1}(\eta(a_2,a_3))-f_{23}(a_2,a_3)\big)a_3.\eqno(3.16)
$$
We denote $Y(a_3)=\alpha(a_3)-\xi(a_3)$,
$X(a_2,a_3)=\tau^{-1}(\eta(a_2,a_3))-f_{23}(a_2,a_3)$. Taking
$a_2=1'$ into $(3.16)$, we see that
$(\tau^{-1}(Y(a_3))-X(1',a_3))a_3=0$ for all $a_3\in A_3$.

We claim that
$$
Y(a_3)=\tau(X(1',a_3))\eqno(3.17)
$$
for all $a_3\in A_3$. In fact,
replacing $a_3$ by $m+n$ in $(\tau^{-1}(Y(a_3))-X(1',a_3))a_3=0$, we get
$$
(\tau^{-1}(Y(m))-X(1',m))n+(\tau^{-1}(Y(n))-X(1',n))m=0
$$
for all $m, n\in A_3$. Applying Lemma \ref{xxsec3.1} yields
$Y(m)=\tau(X(1',m))$ for all $m\in A_3$. Thus our claim follows.

Now let us rewrite the relation $(3.16)$ as
$$
a_3\tau(X(1',a_3))a_2=X(a_2,a_3)a_3\eqno(3.18)
$$
for all $a_3\in A_3$. Replacing $a_3$ by $m+n$ in $(3.18)$, we
obtain
$$
m\tau(X(1',n))a_2+n\tau(X(1',m))a_2=X(a_2,n)m+X(a_2,m)n  \eqno(3.19)
$$
for all $a_2\in A_2$, $m, n\in A_3$. Replacing $n$ by $a_1n$ in
$(19)$ and then subtracting the left multiplication of $(3.19)$ by $a_1$, we
arrive at
$$
\begin{aligned}
&m\tau(X(1',a_1n))a_2-a_1m\tau(X(1',n))a_2 \\
&=X(a_2,m)a_1n+X(a_2,a_1n)m-a_1X(a_2,m)n-a_1X(a_2,n)m
\end{aligned} \eqno(3.20)
$$
for all $a_1\in A_1$, $a_2\in A_2$ and $m, n\in A_3$. Taking $m=n$
in $(3.20)$ and using $(3.18)$, we have
$$
m\tau(X(1',a_1m))a_2=X(a_2,a_1m)m+[X(a_2,m), a_1]m \eqno(3.21)
$$
for all $a_1\in A_1, a_2\in A_2, m\in A_3$. Left multiplying $a_1$
in $(3.21)$ and considering $(3.18)$, we get $[X(a_2,a_1m),
a_1]m=a_1[X(a_2,m), a_1]m$. That is,
$$
([X(a_2,a_1m),
a_1]-a_1[X(a_2,m), a_1])m=0
$$
for all $a_1\in A_1$, $a_2\in A_2$ and $m\in A_3$. Let us write
$P(m)=[X(a_2,a_1m), a_1]-a_1[X(a_2,m), a_1]$ for some fixed $a_1,
a_2$. Then $P\colon A_3\rightarrow A_1$ is an $\mathcal{R}$-linear
mapping for each $a_1\in A_1, a_2\in A_2$, and $P(m)m=0$. A
linearization of $P(m)m=0$ shows $P(m)n+P(n)m=0$ for all $m, n\in
A_3$. In view of Lemma \ref{xxsec3.1} we know that $P(m)=0$. So
$$
[X(a_{2},a_{1}m), a_{1}]=a_{1}[X(a_{2},m), a_{1}]
$$
for all $a_1\in A_1, a_2\in A_2, m\in A_3$. Picking $b_1\in A_1$
such that $[a_1, b_1]\neq 0$, and then commuting with $b_{1}$, we
get $[a_1, b_1][X(a_2,m), a_{1}]=0$ since $[X(a_2,m), a_{1}]=[a_1,
f_{23}(a_2, m)]\in {\mathcal Z}(A_1)$ by Lemma \ref{xxsec3.7}. Thus
Lemma \ref{xxsec3.2} implies $[X(a_{2},m),a_{1}]=[a_1,
f_{23}(a_2,a_3)]=0$ and this completes the proof of the lemma.
\end{proof}

\begin{lemma}\label{xxsec3.10}
With notations as above, we have
\begin{enumerate}
\item[(1)]
$f_{22}(a_2, a_2)\in {\mathcal Z}(A_1)$ and $g_{11}(a_1,a_1)\in
{\mathcal Z}(A_2)$;
\item[(2)]
$a_1\mapsto f_{12}(a_1,a_2)$ is a commuting linear mapping for each $a_2\in A_2$,\\
$a_2\mapsto g_{12}(a_1,a_2)$ is a commuting linear mapping for each
$a_1\in A_1$.
\end{enumerate}
\end{lemma}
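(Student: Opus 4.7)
The plan has two stages. First I would reduce (2) to (1): by Lemma \ref{xxsec3.7}(2) we have $\tau([f_{22}(a_2,a_2),a_1])=[g_{12}(a_1,a_2),a_2]$ and $\tau([f_{12}(a_1,a_2),a_1])=[g_{11}(a_1,a_1),a_2]$, and since $\tau$ is an isomorphism, (1) forces both right-hand sides to vanish, which is precisely (2). The real content is therefore to prove (1).

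For $f_{22}(a_2,a_2)\in\mathcal{Z}(A_1)$ I work from equation $(3.13)$. Specializing Lemma \ref{xxsec3.7}(2) to $a_2=1'$ gives $\tau([f_{22}(1',1'),a_1])=[g_{12}(a_1,1'),1']=0$, hence $f_{22}(1',1')\in\mathcal{Z}(A_1)$; plugging this into $(3.13)|_{a_2=1'}$ produces $h_{23}(1',a_3)=a_3\beta$ with $\beta:=g_{22}(1',1')-\tau(f_{22}(1',1'))\in A_2$, so $h_{23}(1',\cdot)\colon A_3\to A_3$ is a left $A_1$-module map. Next I linearize $(3.13)$ by $a_2\mapsto a_2+1'$ and subtract $(3.13)$ at $a_2$ and at $1'$; solving for $h_{23}(a_2,a_3)$ yields
\[ h_{23}(a_2,a_3)=a_3 t(a_2)-s(a_2)a_3-a_3\beta a_2, \]
where $s(a_2):=f_{22}(a_2,1')+f_{22}(1',a_2)$ and $t(a_2):=g_{22}(a_2,1')+g_{22}(1',a_2)$. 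Feeding this into the left $A_1$-defect $U_{a_1,a_2}(a_3):=h_{23}(a_2,a_1a_3)-a_1h_{23}(a_2,a_3)$, the $A_1$-linear contributions cancel (thanks to the module-map property of $h_{23}(1',\cdot)$), leaving $U_{a_1,a_2}(a_3)=[a_1,s(a_2)]a_3$. Independently, substituting $a_3\to a_1a_3$ in $(3.13)$ and subtracting the $a_1$-left-multiple of $(3.13)$ gives $U_{a_1,a_2}(a_3)\cdot a_2=[a_1,f_{22}(a_2,a_2)]a_3$. Equating these two expressions produces
\[ [a_1,s(a_2)]\,a_3\,a_2=[a_1,f_{22}(a_2,a_2)]\,a_3, \]
and replacing $a_3$ by $a_3c_2$ and subtracting the $c_2$-right-multiple kills the right-hand side, leaving $[a_1,s(a_2)]\,a_3\,[c_2,a_2]=0$. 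Loyalty of $A_3$ then forces, for every $a_2\notin\mathcal{Z}(A_2)$, $s(a_2)\in\mathcal{Z}(A_1)$; the $\mathcal{R}$-linearity of $s$ in $a_2$ combined with hypothesis (2) ($\mathcal{Z}(A_2)\neq A_2$) extends this to all $a_2\in A_2$ by adding and subtracting a non-central shift. Substituting back gives $[a_1,f_{22}(a_2,a_2)]a_3=0$, and faithfulness of $A_3$ as a left $A_1$-module yields $f_{22}(a_2,a_2)\in\mathcal{Z}(A_1)$.

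The second assertion $g_{11}(a_1,a_1)\in\mathcal{Z}(A_2)$ is proved by the mirror argument. I first derive, by setting $a_2=0$ in $(3.4)$, applying the $a_1\mapsto -a_1$ symmetrization with $2$-torsion-freeness, and cancelling $f_{33}(a_3,a_3)a_3-a_3g_{33}(a_3,a_3)=0$ via $(3.5)$, the companion identity
\[ a_1h_{13}(a_1,a_3)=f_{11}(a_1,a_1)a_3-a_3g_{11}(a_1,a_1). \]
Then $g_{11}(1,1)\in\mathcal{Z}(A_2)$ (from $(3.11)$ at $a_1=1$) makes $h_{13}(1,\cdot)$ a right $A_2$-module map; linearizing the companion identity in $a_1\mapsto a_1+1$ yields a structural formula for $h_{13}(a_1,a_3)$, and the right $A_2$-defect $V_{a_1,a_2}(a_3):=h_{13}(a_1,a_3a_2)-h_{13}(a_1,a_3)a_2$ collapses via that formula to $a_3[t'(a_1),a_2]$ with $t'(a_1):=g_{11}(a_1,1)+g_{11}(1,a_1)$, while direct manipulation of the companion identity gives $a_1V_{a_1,a_2}(a_3)=a_3[g_{11}(a_1,a_1),a_2]$. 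Combining yields $a_1a_3[t'(a_1),a_2]=a_3[g_{11}(a_1,a_1),a_2]$; substituting $a_3\to b_1a_3$ and subtracting the $b_1$-left-multiple produces $[a_1,b_1]\,a_3\,[t'(a_1),a_2]=0$. Loyalty together with hypothesis (2) ($\mathcal{Z}(A_1)\neq A_1$) and linearity force $t'(a_1)\in\mathcal{Z}(A_2)$ for all $a_1$, whence $g_{11}(a_1,a_1)\in\mathcal{Z}(A_2)$. The main obstacle is verifying that the two independent computations of $U_{a_1,a_2}$ (and of $V_{a_1,a_2}$) really produce the claimed commutator identities and do not collapse to tautologies; the decisive input is the module-map property of $h_{23}(1',\cdot)$ (respectively $h_{13}(1,\cdot)$), which is new information extracted from linearization and depends on the centrality of $f_{22}(1',1')$ (respectively $g_{11}(1,1)$) established at the very start.
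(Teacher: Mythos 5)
Your proof is correct, but its technical core takes a genuinely different route from the paper's. The reduction of part (2) to part (1) through Lemma \ref{xxsec3.7}(2) is exactly what the paper does in its final lines. For part (1), however, the paper proves the two centrality statements \emph{together}: starting from $(3.13)$ and $(3.23)$ it builds the chain $(3.25)$--$(3.30)$ --- in particular $h_{23}(a_2,a_3a_2)=h_{23}(a_2,a_3)a_2$ via Lemma \ref{xxsec3.5}, and the coupled identity $(3.28)$ involving $f_{12}$, $g_{12}$, $h_{13}$, $h_{23}$, which is extracted from $(3.4)$ --- and then combines $(3.27)$, $(3.30)$ and $(3.10)$ into the single relation $a_1[f_{22}(a_2,a_2),a_1]a_3=a_3[a_2,g_{11}(a_1,a_1)]a_2$; this says a certain diagonal matrix lies in $\mathcal{Z(T)}$, and both conclusions are read off by commuting with elements of $A_1$ and $A_2$ and invoking Lemma \ref{xxsec3.2}. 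You instead decouple the two statements completely and prove each by a self-contained mirror argument: linearizing $(3.13)$ (resp.\ $(3.23)$) at the unit yields a closed formula for $h_{23}(a_2,\cdot)$ (resp.\ $h_{13}(a_1,\cdot)$), the module defect is computed in two independent ways, and loyalty applied to $[a_1,s(a_2)]\,a_3\,[c_2,a_2]=0$ (resp.\ $[a_1,b_1]\,a_3\,[t'(a_1),a_2]=0$) finishes the job, with the hypothesis $\mathcal{Z}(A_i)\neq A_i$ entering only through the linearity-extension step for $s$ and $t'$. Your route avoids Lemma \ref{xxsec3.5}, the coupled relation $(3.28)$ and the central-matrix step altogether, at the price of the preliminary observations $f_{22}(1',1')\in\mathcal{Z}(A_1)$ and $g_{11}(1,1)\in\mathcal{Z}(A_2)$ (obtained from Lemma \ref{xxsec3.7}(2) at the unit, and needed, via $\pi_A(\mathcal{Z(T)})=\mathcal{Z}(A)$, to make $h_{23}(1',\cdot)$ and $h_{13}(1,\cdot)$ module maps), which you correctly identify as the decisive inputs; all the computations you sketch do check out.
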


\begin{proof}
Taking $a_{2}=0$ in $(3.4)$ and using $(3.5)$, we get
$$
\begin{aligned}
&(f_{11}(a_{1},a_{1})+f_{13}(a_{1},a_{3}))a_{3}-a_{3}(g_{11}(a_{1},a_{1})+g_{13}(a_{1},a_{3}))\\
&\quad -a_{1}(h_{13}(a_{1},a_{3})+h_{33}(a_{3},a_{3}))=0
\end{aligned}\eqno(3.22)
$$
for all $a_1\in A_1, a_3\in A_3$. Note that $\mathcal{R}$ is
$2$-torsion free ring. Substituting $-a_1$ for $a_1$ in $(3.22)$, we
obtain
$$
a_1h_{13}(a_1,a_3)=f_{11}(a_1,a_1)a_3-a_3g_{11}(a_1,a_1) \eqno(3.23)
$$
for all $a_1\in A_1, a_3\in A_3$. Combining $(3.22)$ with $(3.23)$
gives
$$
a_1h_{33}(a_3,a_3)=f_{13}(a_1,a_3)a_3-a_3g_{13}(a_1,a_3) \eqno(3.24)
$$
for all $a_1\in A_1, a_3\in A_3$. On the other hand, replacing $a_3$
by $a_1a_3$ in $(3.13)$ and subtracting the left multiplication of $(3.13)$ by
$a_1$ we get
$$
(a_1h_{23}(a_2,a_3)-h_{23}(a_2,a_1a_3))a_2=[f_{22}(a_2,a_2),a_1]a_3
\eqno(3.25)
$$
for all $a_1\in A_1, a_2\in A_2, a_3\in A_3$. Replacing $a_3$ by
$a_3a_2$ in $(3.13)$ and subtracting the right multiplication of $(3.11)$ by $a_2$ we get
$h_{23}(a_2,a_3a_2)a_2=h_{23}(a_2,a_3)a_2a_2$. Let us set
$K(x,y)=h_{23}(x,a_3y)-h_{23}(x,a_3)y$, where $x,y\in A_2$. It is
easy to see that $K(x,y)\colon A_2\times A_2\rightarrow A_3$ is an
$\mathcal{R}$-bilinear mapping, and $K(a_2,a_2)a_2=0$. It follows
from Lemma \ref{xxsec3.5} that
$$
h_{23}(a_2,a_3a_2)=h_{23}(a_2,a_3)a_{2} \eqno(3.26)
$$
for all $a_2\in A_2, a_3\in A_3$. Substituting $a_3a_2$ for $a_3$ in
$(3.23)$ and then subtracting the right multiplication of $(3.23)$ by $a_2$, we
have
$$
a_{3}[g_{11}(a_{1},a_{1}),a_{2}]=a_{1}(h_{13}(a_{1},a_{3}a_{2})-h_{13}(a_{1},a_{3})a_{2})
\eqno(3.27)
$$
for all $a_1\in A_1, a_2\in A_2$ and $a_3\in A_3$. Combining the
relations $(3.13)-(3.14)$, $(3.23)-(3.24)$ together with $(3.4)$
yields
$$
a_{1}h_{23}(a_{2},a_{3})+a_{3}g_{12}(a_{1},a_{2})=h_{13}(a_{1},a_{3})a_{2}+f_{12}(a_{1},a_{2})a_{3}
\eqno(3.28)
$$
for all $a_1\in A_1, a_2\in A_2$ and $a_3\in A_3$. Replacing $a_3$
by $a_3a_2$ in $(3.28)$ and then subtracting the right multiplication of $(3.28)$ by $a_2$, we arrive at
$$
\begin{aligned}
&a_{1}(h_{23}(a_{2},a_{3})a_{2}-h_{23}(a_{2},a_{3}a_{2}))+a_{3}[g_{12}(a_{1},a_{2}),a_{2}] \\
&\quad =(h_{13}(a_{1},a_{3})a_{2}-h_{13}(a_{1},a_{3}a_{2}))a_{2}
\end{aligned} \eqno(3.29)
$$
for all $a_1\in A_1, a_2\in A_2$ and $a_3\in A_3$. Considering the
identities $(3.26)$ and $(3.29)$, we get
$$
-a_{3}[g_{12}(a_{1},a_{2}),a_{2}]=(h_{13}(a_{1},a_{3}a_{2})-h_{13}(a_{1},a_{3})a_{2})a_{2} \eqno(3.30)
$$
for all $a_1\in A_1, a_2\in A_2$ and $a_3\in A_3$. Making the right multiplication of
$(3.27)$ by $a_2$ and then subtracting the left multiplication of $(3.30)$ by
$a_1$, we obtain
$$
a_{1}a_{3}[g_{12}(a_{1},a_{2}),a_{2}]=a_{3}[a_{2},g_{11}(a_{1},a_{1})]a_{2}
$$
for all $a_1\in A_1, a_2\in A_2$ and $a_3\in A_3$. According to
$(3.10)$, we have
$$
a_{1}[f_{22}(a_{2},a_{2}),a_{1}]a_{3}=a_{3}[a_{2},g_{11}(a_{1},a_{1})]a_{2}
$$
for all $a_1\in A_1, a_2\in A_2$ and $a_3\in A_3$. Therefore
$$
\left[
\begin{array}
[c]{cc}%
a_{1}[f_{22}(a_{2},a_{2}),a_{1}] & 0\\
0 & [a_{2},g_{11}(a_{1},a_{1})]a_{2}\\
\end{array}
\right] \in \mathcal{Z(T)}.
$$
Commuting with $b_2\in A_2$, we get
$[g_{11}(a_1,a_1),a_2][a_2,b_2]=0$. Then Lemma \ref{xxsec3.2}
implies $g_{11}(a_1,a_1)\in {\mathcal Z}(A_{2})$ and hence
$a_1\mapsto f_{12}(a_1,a_2)$ is a commuting linear mapping for each
$a_2\in A_2$ by Lemma \ref{xxsec3.7}. Similarly, we have
$f_{22}(a_2, a_2)\in {\mathcal Z}(A_1)$ and $a_2\mapsto
g_{12}(a_1,a_2)$ is a commuting linear mapping for each $a_1\in
A_1$.
\end{proof}

{\noindent}{\bf Proof of Theorem 3.4.} Let ${\mathfrak q}\colon
\mathcal{T}\times \mathcal{T}\longrightarrow \mathcal{T}$ be an
arbitrary $\mathcal{R}$-bilinear mapping of $\mathcal{T}$. It
follows from Lemma \ref{xxsec3.7}, Lemma \ref{xxsec3.9} and Lemma
\ref{xxsec3.10} that every centralizing trace of $\mathfrak{q}$ is
commuting. Then the desired result can be obtained by \cite[Theorem
3.1]{BenkovicEremita1}. \qed \vspace{2mm}

An algebra $\mathcal{A}$ over a commutative ring $\mathcal{R}$ is said to be {\em central}
over $\mathcal{R}$ if $\mathcal{Z(A)}=\mathcal{R}1$. The following technical lemma
will be used to deal with the centralizing traces of upper triangular matrix algebras.

\begin{lemma}\label{xxsec3.11}
Let $\mathcal{T}=\left[\smallmatrix \mathcal{R} & M\\
O & B \endsmallmatrix \right]$ be a $2$-torsion free triangular algebra over the commutative
ring $\mathcal{R}$ and ${\mathfrak q}\colon \mathcal{T}\times
\mathcal{T}\longrightarrow \mathcal{T}$ be an $\mathcal{R}$-bilinear
mapping. Suppose that $B$ is noncommutative and both
$\mathcal{T}$ and $B$ are central over
$\mathcal{R}$. If
\begin{enumerate}
\item[(1)] each commuting linear mapping on $B$ is proper,
\item[(2)] for any $r\in \mathcal{R}$ and $m\in M$,
$rm=0$ implies $r=0$ or $m=0$,
\item[(3)] there exist $m_0\in M$ and $b_0\in B$ such that
$m_0b_0$ and $m_0$ are linearly independent over $\mathcal{R}$,
\end{enumerate}
then each centralizing trace ${\mathfrak T}_{\mathfrak q}:
\mathcal{T}\longrightarrow \mathcal{T}$ of ${\mathfrak q}$ is
proper.
\end{lemma}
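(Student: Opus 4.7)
The plan is to follow the proof of Theorem~\ref{xxsec3.4} step by step, with two simultaneous modifications: (i) exploit the commutativity of $A_1=\mathcal{R}$ to trivialize every step that invokes noncommutativity of $A_1$, and (ii) replace each use of Lemma~\ref{xxsec3.1} or Lemma~\ref{xxsec3.2} (which relied on the loyalty of $M$) by a surrogate argument built from hypotheses~(2) and~(3). Writing $\mathfrak{T}_{\mathfrak{q}}$ in block form produces the same bilinear pieces $f_{ij}, g_{ij}, h_{ij}$ as in Section~\ref{xxsec3}, and since $A_1=\mathcal{R}$, every $f_{ij}(\cdot,\cdot)\in A_1$ lies in $\mathcal{Z}(A_1)=A_1$ automatically. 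In particular the conclusions $f_{22}(a_2,a_2)\in\mathcal{Z}(A_1)$, $f_{23}(a_2,a_3)\in\mathcal{Z}(A_1)$ and the commuting-linear-mapping assertions for $f_{11},f_{12},f_{13}$ are trivially true; the relation $\tau([f_{23}(a_2,a_3),a_1])=[g_{13}(a_1,a_3),a_2]$ of Lemma~\ref{xxsec3.7} then immediately gives $g_{13}(a_1,a_3)\in\mathcal{Z}(A_2)$, bypassing the step at the end of Lemma~\ref{xxsec3.9} that required a noncentral element of $A_1$.

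For the loyalty replacements, any equation of the form $h(m)n+h(n)m=0$ with $h\colon M\to\mathcal{R}$ (which is how Lemma~\ref{xxsec3.1} enters in our setting, since all the relevant mappings now land in $A_1=\mathcal{R}$) admits a direct treatment: setting $n=m$ and using $2$-torsion freeness gives $h(m)m=0$, and hypothesis~(2) then forces $h\equiv 0$. Any equation of the form $\rho c=0$ with $\rho\in\mathcal{Z}(A_2)=\mathcal{R}\cdot 1'$ and $c\in B$ nonzero (where the original proof would have invoked Lemma~\ref{xxsec3.2}) is handled by picking $m\in M$ with $mc\neq 0$, which exists by the right-faithfulness of $M$ as a $B$-module that is built into the definition of triangular algebra; then $\rho(mc)=(\rho m)c=m(\rho c)=0$, and hypothesis~(2) forces $\rho=0$. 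With these two surrogates in hand, the chain of arguments in Lemmas~\ref{xxsec3.5}--\ref{xxsec3.10} goes through and reduces every centralizing trace of $\mathfrak{q}$ to a commuting trace.

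Finally, one would apply Theorem~3.1 of~\cite{BenkovicEremita1} to conclude properness from commutingness; the role of hypothesis~(3) is precisely to substitute for loyalty in this final extraction of the central leading coefficient $z\in\mathcal{R}I=\mathcal{Z(T)}$. Concretely, the pair $(m_0,b_0)$ provided by~(3) yields $\mathcal{R}$-linear relations in $M$ of the form $\alpha m_0+\beta m_0 b_0=0$ from which the linear independence of $m_0$ and $m_0 b_0$ singles out the scalars uniquely, playing the role loyalty plays in the original BE argument. The main obstacle is precisely this final step: one must either check that the proof of Theorem~3.1 of~\cite{BenkovicEremita1} requires only the weaker surrogate provided by hypotheses~(2) and~(3) rather than full loyalty, or construct the decomposition $\mathfrak{T}_{\mathfrak{q}}(T)=zT^2+\mu_1(T)T+\mu_2(T)$ by hand, identifying $z\in\mathcal{Z(T)}$ and the traces $\mu_i\colon\mathcal{T}\to\mathcal{Z(T)}$ from the normal forms of $f_{ij},g_{ij},h_{ij}$ already derived in the previous steps.
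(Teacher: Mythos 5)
Your reduction to the commuting case is far more laborious than it needs to be, and the step you yourself flag as ``the main obstacle'' is a genuine gap that the proposal does not close. On the first point: there is no need to re-run Lemmas \ref{xxsec3.5}--\ref{xxsec3.10} with surrogate arguments. Since $F$ takes values in $A_1=\mathcal{R}$, which is commutative, the relation $(3.1)$ gives $[F,a_1]=0$ identically; a central element of $\mathcal{T}$ whose $(1,1)$-block vanishes must satisfy $m\,[G,a_2]=0$ for all $m\in M$, and faithfulness of $M$ as a right $B$-module (built into the definition of a triangular algebra) forces $[G,a_2]=0$. The off-diagonal entry of any element of $\mathcal{Z(T)}$ is already zero, so the whole commutator in $(3.1)$ vanishes and ${\mathfrak T}_{\mathfrak q}$ is commuting. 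This is exactly the paper's two-line argument, and it uses none of hypotheses (1)--(3).

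The real content of the lemma is the second step, and there your proposal stops short. You propose to apply \cite[Theorem 3.1]{BenkovicEremita1}, but that theorem assumes $M$ is loyal, which is not a hypothesis here; conditions (2) and (3) are strictly weaker substitutes. You correctly identify this mismatch but then leave it unresolved, offering only the alternatives ``check that the BE proof needs only the weaker hypotheses'' or ``construct the decomposition by hand,'' neither of which is carried out. In particular, your sketch of how $(m_0,b_0)$ ``singles out the scalars uniquely'' is not an argument: you never produce the relations $\alpha m_0+\beta m_0b_0=0$ from the identities satisfied by $f_{ij},g_{ij},h_{ij}$, nor extract the central coefficient $z$ from them. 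The paper closes this step by citing \cite[Lemma 3.2]{BenkovicEremita1}, which is precisely the statement that every \emph{commuting} trace on a triangular algebra of the form $\left[\smallmatrix \mathcal{R} & M\\ O & B \endsmallmatrix \right]$ is proper under hypotheses (1)--(3) (with $\mathcal{T}$ and $B$ central and $B$ noncommutative). Without either invoking that result or supplying its proof, your argument establishes only that the centralizing trace is commuting, not that it is proper.
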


\begin{proof}
We use the same notations of Theorem \ref{xxsec3.4}. Since $A_1=\mathcal{R}$ is commutative,
then the equation $(3.1)$ shows that $[F, a_1]=0$ and hence $[G, a_2]=0$. Therefore the
centralizing trace ${\mathfrak T}_{\mathfrak q}$ is commuting. Now the desired result
follows from \cite[Lemma 3.2]{BenkovicEremita1}.
\end{proof}

\begin{corollary}\label{xxsec3.12}
Let $\mathcal{R}$ be a $2$-torsion free commutative domain
and $\mathcal{T}_n(\mathcal{R})(n\geq 2)$ be the algebra of all $n\times n$ upper triangular matrices over $\mathcal{R}$. Suppose that ${\mathfrak q}\colon \mathcal{T}_n(\mathcal{R})\times
\mathcal{T}_n(\mathcal{R})\longrightarrow \mathcal{T}_n(\mathcal{R})$ is an $\mathcal{R}$-bilinear
mapping. Then every centralizing trace ${\mathfrak T}_{\mathfrak q}:
\mathcal{T}_n(\mathcal{R})\longrightarrow \mathcal{T}_n(\mathcal{R})$ of ${\mathfrak q}$ is
proper.
\end{corollary}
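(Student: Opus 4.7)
The plan is to split the argument according to whether $n = 2$ or $n \geq 3$. Since $\mathcal{R}$ is a commutative domain, one checks that $\mathcal{Z}(\mathcal{T}_k(\mathcal{R})) = \mathcal{R} I_k$ for every $k \geq 2$, so all the algebras in play are central over $\mathcal{R}$.

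For $n \geq 3$ I would apply Lemma \ref{xxsec3.11} to the decomposition
$$
\mathcal{T}_n(\mathcal{R}) = \left[\begin{array}{cc} \mathcal{R} & M \\ 0 & \mathcal{T}_{n-1}(\mathcal{R}) \end{array}\right]
$$
with $M \cong \mathcal{R}^{n-1}$ the module of first-row entries above the diagonal, equipped with the natural $(\mathcal{R}, \mathcal{T}_{n-1}(\mathcal{R}))$-bimodule structure via matrix multiplication. The corner $B = \mathcal{T}_{n-1}(\mathcal{R})$ is noncommutative because $n - 1 \geq 2$. Hypothesis (1) of Lemma \ref{xxsec3.11} — properness of commuting linear mappings on $\mathcal{T}_{n-1}(\mathcal{R})$ — is covered by Cheung's classification of commuting maps on triangular matrix algebras \cite{Cheung2}; hypothesis (2) is immediate because $\mathcal{R}$ is a domain and $M$ is $\mathcal{R}$-free; and for hypothesis (3) I would take $m_0 = (1, 0, \ldots, 0) \in M$ together with $b_0 = E_{12} \in \mathcal{T}_{n-1}(\mathcal{R})$, so that $m_0 b_0 = (0, 1, 0, \ldots, 0)$ is visibly $\mathcal{R}$-linearly independent from $m_0$.

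The case $n = 2$ requires a direct argument, and I expect it to be the main obstacle: Lemma \ref{xxsec3.11} cannot be used because $B = \mathcal{R}$ is commutative, and Theorem \ref{xxsec3.4} cannot be used because $\pi_A(\mathcal{Z(T)}) = \mathcal{R} = A$. However, in $\mathcal{T}_2(\mathcal{R})$ every commutator $[T_1, T_2]$ lies in $\mathcal{R} E_{12}$, whereas $\mathcal{Z}(\mathcal{T}_2(\mathcal{R})) = \mathcal{R} I_2$, and these two submodules intersect trivially. Consequently the centralizing condition on $\mathfrak{T}_{\mathfrak{q}}$ automatically collapses to the commuting condition $[\mathfrak{T}_{\mathfrak{q}}(T), T] = 0$. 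Writing $T = a E_{11} + b E_{22} + c E_{12}$ and expanding $\mathfrak{T}_{\mathfrak{q}}(T)$ as an $\mathcal{R}$-valued quadratic form in $(a, b, c)$ using the basis $\{E_{11}, E_{22}, E_{12}\}$, the commuting identity reduces to a single polynomial relation of degree at most three in $(a, b, c)$ whose coefficients can be matched monomial by monomial to produce $z \in \mathcal{Z(T)}$ together with the scalar traces $\mu_1$ and $\mu_2$ that exhibit $\mathfrak{T}_{\mathfrak{q}}$ in the required proper form.
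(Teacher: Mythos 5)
Your proposal is correct and follows essentially the route the paper intends: the paper omits the proof, deferring to Benkovi\v{c}--Eremita's Corollary 3.4, whose argument is exactly your split --- Lemma \ref{xxsec3.11} applied to the decomposition $\left[\begin{smallmatrix}\mathcal{R} & \mathcal{R}^{n-1}\\ 0 & \mathcal{T}_{n-1}(\mathcal{R})\end{smallmatrix}\right]$ for $n\geq 3$ (your verification of hypotheses (1)--(3) is right), plus a separate treatment of $n=2$, where your observation that every commutator in $\mathcal{T}_2(\mathcal{R})$ lies in $\mathcal{R}E_{12}$ and hence centralizing collapses to commuting is the correct reduction. The one phrase to tighten is ``matched monomial by monomial'': since $\mathcal{R}$ may be a finite domain, the resulting relation $(F-G)c=(a-b)H$ is an identity of functions rather than of formal polynomials, so the coefficients should be extracted by the standard substitutions ($c=0$, then $a$ or $b$ equal to $0$, $\pm 1$, $2$, etc.), using $2$-torsion-freeness and the domain hypothesis; carrying this out does produce the proper form, in fact with $z=0$ and $\mu$, $\nu$ read off from the $(1,2)$- and $(2,2)$-entries.
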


\begin{proof}
The proof is similar with that of \cite[Corollary 3.4]{BenkovicEremita1} and hence we omit it here.
\end{proof}

Applying Theorem \ref{xxsec3.4} and \cite[Corollary 3.5]{BenkovicEremita1}
yields

\begin{corollary}\label{xxsec3.13}
Let $\textbf{\rm H}$ be a Hilbert space, $\mathcal{N}$ be a nest of $\textbf{\rm H}$ and ${\mathcal Alg}(\mathcal{N})$ be the nest algebra associated with $\mathcal{N}$. Suppose that ${\mathfrak q}\colon {\mathcal Alg}(\mathcal{N})\times
{\mathcal Alg}(\mathcal{N})\longrightarrow {\mathcal Alg}(\mathcal{N})$ is an $\mathcal{R}$-bilinear
mapping. Then every centralizing trace ${\mathfrak T}_{\mathfrak q}:
{\mathcal Alg}(\mathcal{N})\longrightarrow {\mathcal Alg}(\mathcal{N})$ of ${\mathfrak q}$ is
proper.
\end{corollary}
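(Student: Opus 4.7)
My plan is to realize ${\mathcal Alg}(\mathcal{N})$ as a triangular algebra and then invoke Theorem~\ref{xxsec3.4}. First I would dispose of the trivial-nest case $\mathcal{N}=\{0,\textbf{\rm H}\}$, in which ${\mathcal Alg}(\mathcal{N})=B(\textbf{\rm H})$ is a centrally closed prime algebra of characteristic zero, so every centralizing trace of a bilinear mapping is already proper by the classical Bre\v{s}ar theory for prime rings. For the remaining case, I would pick some $N\in\mathcal{N}$ with $0\neq N\neq\textbf{\rm H}$, let $P$ be the orthogonal projection onto $N$, and use the decomposition
$$
{\mathcal Alg}(\mathcal{N})=\left[
\begin{array}{cc}
{\mathcal Alg}(\mathcal{N}_1) & B(N^\perp,N)\\
O & {\mathcal Alg}(\mathcal{N}_2)
\end{array}
\right],
$$
where $\mathcal{N}_1$ and $\mathcal{N}_2$ denote the nests induced on $N$ and $N^\perp$, respectively. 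The task then reduces to verifying the three hypotheses of Theorem~\ref{xxsec3.4} for this triangular presentation.

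For hypothesis $(1)$, \cite[Corollary~3.5]{BenkovicEremita1} states exactly that every commuting linear mapping on a nest algebra is proper, which covers both corners ${\mathcal Alg}(\mathcal{N}_1)$ and ${\mathcal Alg}(\mathcal{N}_2)$. For hypothesis $(2)$, the standard fact that the center of any nest algebra on a Hilbert space of dimension at least two is $\mathbb{C}I$ yields $\pi_A(\mathcal{Z(T)})=\mathcal{Z}(A)=\mathbb{C}I_N$ and analogously for $B$; since a nest algebra on a Hilbert space of dimension at least two is noncommutative, we also have $\mathcal{Z}(A)\neq A$ and $\mathcal{Z}(B)\neq B$. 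For hypothesis $(3)$, loyalty of $M=B(N^\perp,N)$ follows by testing the relation $aMb=0$ against rank-one operators $\xi\eta^{\ast}$ with $\xi\in N$ and $\eta\in N^\perp$, which forces $a=0$ or $b=0$ by the density of such rank-one operators in the bimodule.

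The principal obstacle is the degenerate subcase in which one of the corners is forced to be commutative, namely when every choice of nontrivial $N\in\mathcal{N}$ satisfies $\dim N=1$ or $\dim N^\perp=1$. Whenever the nest admits $N$ with both $\dim N\geq 2$ and $\dim N^\perp\geq 2$, one simply picks such an $N$ and the argument above applies verbatim. Otherwise, I would instead apply Lemma~\ref{xxsec3.11} with $\mathcal{R}=\mathbb{C}$ playing the role of the commutative corner: the noncommutativity and centrality of the other corner follow from the structure of the nest, and the existence of $m_0\in M$ and $b_0\in B$ with $m_0,m_0 b_0$ linearly independent over $\mathbb{C}$ is immediate from the supply of rank-one operators. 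Combining these cases completes the argument.
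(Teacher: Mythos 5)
Your overall strategy --- split off a nontrivial nest projection, present ${\mathcal Alg}(\mathcal{N})$ as a triangular algebra, and check the hypotheses of Theorem \ref{xxsec3.4} --- is exactly the route the paper takes; the paper simply compresses the decomposition and the verification of hypotheses (1)--(3) into the citation of \cite[Corollary 3.5]{BenkovicEremita1}. Your checks in the generic case (some $N\in\mathcal{N}$ with $\dim N\geq 2$ and $\dim N^{\perp}\geq 2$): loyalty via rank-one operators, $\mathcal{Z}({\mathcal Alg}(\mathcal{N}_i))=\mathbb{C}I$, and noncommutativity of the corners, are all correct.

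The soft spot is your handling of the degenerate cases. Lemma \ref{xxsec3.11} requires the upper-left corner to be $\mathcal{R}$ \emph{and} the lower-right corner $B$ to be noncommutative, and this fails in two situations your fallback claims to cover: (i) $\dim \textbf{\rm H}=2$ with a nontrivial nest, where ${\mathcal Alg}(\mathcal{N})\cong \mathcal{T}_2(\mathbb{C})$ and both corners are commutative --- here neither Theorem \ref{xxsec3.4} nor Lemma \ref{xxsec3.11} applies and you must invoke Corollary \ref{xxsec3.12} (or argue directly); and (ii) nests whose only nontrivial elements have codimension one, e.g. $\mathcal{N}=\{0,K,\textbf{\rm H}\}$ with $\dim K\geq 2$ and $\dim K^{\perp}=1$, where the commutative corner is forced to sit at the bottom right, so Lemma \ref{xxsec3.11} as stated does not apply and you need its transposed analogue (true by passing to the opposite algebra and noting that centralizing traces and properness transfer under anti-isomorphisms, but this should be said). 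Finally, disposing of the trivial nest by ``classical Bre\v{s}ar theory for prime rings'' is too quick when $\dim \textbf{\rm H}=2$: $B(\textbf{\rm H})=\mathcal{M}_2(\mathbb{C})$ satisfies the standard identity of degree $4$, which is exactly the excluded case in the prime-ring theorems on traces of biadditive maps, so that case requires the refinement in \cite{BresarSemrl} or a separate computation. These are all patchable, but as written the proposal does not yet cover every nest.
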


\section{Lie Triple Isomorphisms on Triangular Algebras}\label{xxsec4}

\begin{lemma}\label{xxsec4.1}
Let $\mathcal{R}$ be $2$-torsion free. Then the triangular algebra
$\mathcal{T}=\left[\smallmatrix A & M\\
O & B \endsmallmatrix \right]$ does not contain nonzero central Jordan ideals.
\end{lemma}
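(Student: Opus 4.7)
The plan is to argue by contradiction. Suppose $J$ is a nonzero central Jordan ideal of $\mathcal{T}$, i.e., an $\mathcal{R}$-submodule with $J \subseteq \mathcal{Z(T)}$ satisfying $z\circ t := zt+tz \in J$ for every $z\in J$, $t\in \mathcal{T}$. Pick any nonzero $z\in J$. Using the description of $\mathcal{Z(T)}$ recalled in Section~\ref{xxsec2}, write
\[
z = \begin{bmatrix} a & 0 \\ 0 & b \end{bmatrix}, \qquad am = mb \text{ for all } m\in M.
\]
Note that at least one of $a,b$ must be nonzero.

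Next I would test the Jordan ideal property against the strictly upper triangular elements. For an arbitrary $m\in M$, take $t = \left[\smallmatrix 0 & m\\ O & 0\endsmallmatrix\right]$. Then
\[
z\circ t \;=\; \begin{bmatrix} 0 & am+mb \\ 0 & 0 \end{bmatrix} \;=\; \begin{bmatrix} 0 & 2am \\ 0 & 0 \end{bmatrix}
\]
must lie in $J\subseteq \mathcal{Z(T)}$. Since elements of $\mathcal{Z(T)}$ have zero $(1,2)$-entry, this forces $2am=0$. Because $\mathcal{R}$ is $2$-torsion free and $M$ is an $\mathcal{R}$-module, we obtain $am=0$ for all $m\in M$.

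Finally, I would invoke the faithfulness built into the definition of a triangular algebra. Since $M$ is faithful as a left $A$-module, $aM=0$ gives $a=0$. Then from the centrality condition $am=mb$ we get $mb=0$ for all $m\in M$, and the faithfulness of $M$ as a right $B$-module yields $b=0$. Hence $z=0$, contradicting the choice of $z$. This completes the argument, and there is really no hard step: the only place where a hypothesis is used nontrivially is the passage from $2am=0$ to $am=0$, which is exactly what the $2$-torsion free assumption is for, while the rest is the definition of $\mathcal{Z(T)}$ and the standing faithfulness of $M$.
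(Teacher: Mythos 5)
Your argument is correct and is essentially the paper's own proof: both write a central element as $\left[\smallmatrix a & 0\\ 0 & b\endsmallmatrix\right]$ with $am=mb$, form its Jordan product with $\left[\smallmatrix 0 & m\\ 0 & 0\endsmallmatrix\right]$, and conclude $2am=0$, hence $am=0$ by $2$-torsion freeness, hence $a=0$ (and then $b=0$) by faithfulness of $M$. Your write-up is just slightly more explicit about the right-module half of the faithfulness argument.
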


\begin{proof}
Let $\mathcal{J}$ be a central Jordan ideal of $\mathcal{T}$.
Suppose that $\left[\smallmatrix \alpha & 0\\
0 & \tau(\alpha) \endsmallmatrix \right]\in \mathcal{J}$. Hence
$$
\left[\begin{array}{cc}
\alpha & 0\\
& \tau(\alpha)\end{array}\right]\circ \left[\begin{array}{cc}
0 & m\\
& 0\end{array}\right]=\left[\begin{array}{cc}
0 & \alpha m+m\tau(\alpha)\\
& 0\end{array}\right]
$$
for all $m\in M$. This implies that $2\alpha M=0$ and so
$\alpha=0=\left[\smallmatrix \alpha & 0\\
0 & \tau(\alpha) \endsmallmatrix \right]$.
\end{proof}

\begin{theorem}\label{xxsec4.2}
Let $\mathcal{T}=\left[\smallmatrix A & M\\
O & B \endsmallmatrix \right]$ and $\mathcal{T^\prime}=\left[\smallmatrix A^\prime & M^\prime\\
O & B^\prime \endsmallmatrix \right]$ be two triangular algebras over a commutative ring
$\mathcal{R}$ with $\frac{1}{2}\in \mathcal{R}$ and let
$\mathfrak{l}:\mathcal{T}\longrightarrow \mathcal{T^\prime}$ be a
Lie triple isomorphism. If
\begin{enumerate}
\item[(1)] each centralizing trace of a bilinear mapping on $\mathcal{T^\prime}$ is proper,
\item[(2)] at least one of $A, B$ and at least one of  $A^\prime, B^\prime$ are noncommutative,
\item[(3)] $M^\prime$ is loyal,
\end{enumerate}
then $\mathfrak{l}=\pm \mathfrak{m}+\mathfrak{n}$, where
$\mathfrak{m}\colon \mathcal{T}\rightarrow \mathcal{T'}$ is a Jordan
homomorphism, $\mathfrak{m}$ is one-to-one, and $\mathfrak{n}\colon
\mathcal{T}\longrightarrow \mathcal{Z(T^\prime)}$ is a linear
mapping vanishing on each second commutator. Moreover, if
$\mathcal{T^\prime}$ is central over $\mathcal{R}$, then
$\mathfrak{m}$ is onto.
\end{theorem}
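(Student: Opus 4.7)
The plan is to reduce the problem to a centralizing-trace question on $\mathcal{T}'$ and then invoke Theorem~\ref{xxsec3.4} via hypothesis~(1). Since $\mathfrak{l}$ is $\mathcal{R}$-linear and bijective, so is $\mathfrak{l}^{-1}$, and the formula
$$\mathfrak{q}(x',y')\;=\;\mathfrak{l}\bigl(\mathfrak{l}^{-1}(x')\,\mathfrak{l}^{-1}(y')+\mathfrak{l}^{-1}(y')\,\mathfrak{l}^{-1}(x')\bigr)$$
defines an $\mathcal{R}$-bilinear mapping on $\mathcal{T}'$ whose trace is $T_{\mathfrak{q}}(\mathfrak{l}(x))=2\mathfrak{l}(x^2)$. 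Because $[x,x^2]=0$, the Lie triple identity applied to $[[x,x^2],c]$ yields $[[\mathfrak{l}(x),\mathfrak{l}(x^2)],\mathfrak{l}(c)]=0$ for every $c\in\mathcal{T}$; surjectivity of $\mathfrak{l}$ then gives $[\mathfrak{l}(x),\mathfrak{l}(x^2)]\in\mathcal{Z}(\mathcal{T}')$, so $T_{\mathfrak{q}}$ is a centralizing trace. Hypothesis~(1) therefore produces a central element $\varepsilon\in\mathcal{Z}(\mathcal{T}')$, an $\mathcal{R}$-linear $\lambda\colon\mathcal{T}'\to\mathcal{Z}(\mathcal{T}')$, and a trace $\rho$ of a bilinear map into $\mathcal{Z}(\mathcal{T}')$ such that (after dividing by~$2$, which uses $\tfrac{1}{2}\in\mathcal{R}$)
$$\mathfrak{l}(x^2)\;=\;\varepsilon\,\mathfrak{l}(x)^2+\lambda(\mathfrak{l}(x))\mathfrak{l}(x)+\rho(\mathfrak{l}(x)).$$

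Polarizing $x\mapsto x+y$ and cancelling the diagonal terms gives
$$\mathfrak{l}(xy+yx)\;=\;\varepsilon\bigl(\mathfrak{l}(x)\mathfrak{l}(y)+\mathfrak{l}(y)\mathfrak{l}(x)\bigr)+\lambda(\mathfrak{l}(x))\mathfrak{l}(y)+\lambda(\mathfrak{l}(y))\mathfrak{l}(x)+(\text{central}).$$
Iterating this identity on the triple Jordan expression $2xyx=x(xy+yx)+(xy+yx)x-(x^2y+yx^2)$ and comparing with the Lie triple identity applied directly to $[[x,y],x]=2xyx-x^2y-yx^2$ (which produces an independent, central-free formula for $\mathfrak{l}(xyx)$), I obtain an equation of the shape $(\varepsilon^2-1)\Phi(x,y)\in\mathcal{Z}(\mathcal{T}')$ in which $\Phi$ has a generically nonzero $M'$-component. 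Loyalty of $M'$, the noncommutativity hypothesis~(2), and the fact that $\mathcal{Z}(\mathcal{T}')$ is a domain (Lemma~\ref{xxsec3.3}) then force $\varepsilon^2=1$, whence $\varepsilon=\pm 1$.

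With $\varepsilon\in\{\pm1\}$ fixed, I choose the central correction $\mathfrak{n}\colon\mathcal{T}\to\mathcal{Z}(\mathcal{T}')$ so that $\mathfrak{m}:=\varepsilon(\mathfrak{l}-\mathfrak{n})$ satisfies $\mathfrak{m}(x^2)=\mathfrak{m}(x)^2$; polarization then yields $\mathfrak{m}(xy+yx)=\mathfrak{m}(x)\mathfrak{m}(y)+\mathfrak{m}(y)\mathfrak{m}(x)$, so $\mathfrak{m}$ is a Jordan homomorphism and $\mathfrak{l}=\pm\mathfrak{m}+\mathfrak{n}$. Since every Jordan homomorphism preserves second commutators, comparing $\mathfrak{m}([[a,b],c])=[[\mathfrak{m}(a),\mathfrak{m}(b)],\mathfrak{m}(c)]$ with the Lie triple identity for $\mathfrak{l}$ gives $\mathfrak{n}([[a,b],c])=0$. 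For injectivity, if $\mathfrak{m}(x)=0$ then $\mathfrak{l}(x)=\mathfrak{n}(x)\in\mathcal{Z}(\mathcal{T}')$, so Lie triple forces $[[x,a],b]=0$ for all $a,b\in\mathcal{T}$; a direct computation inside the triangular algebra $\mathcal{T}$ shows that any such $x$ must itself be central, so $\ker\mathfrak{m}$ is a central Jordan ideal and vanishes by Lemma~\ref{xxsec4.1}. When $\mathcal{T}'$ is central, $\mathcal{Z}(\mathcal{T}')=\mathcal{R}I'$, and after an admissible central readjustment of $\mathfrak{n}$ ensuring $\mathfrak{m}(I)=\pm I'$, the identity $\mathfrak{m}(rI)=\pm rI'$ places $\mathfrak{n}(\mathcal{T})\subseteq\mathfrak{m}(\mathcal{T})$, so surjectivity of $\mathfrak{l}$ upgrades to surjectivity of $\mathfrak{m}$.

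The principal obstacle will be showing $\varepsilon^2=1$ globally: the polarized identity pins down $\varepsilon$ only up to the central corrections coming from $\lambda$ and $\rho$, and a careful iteration of the identity together with the loyalty of $M'$ (used through Lemmas~\ref{xxsec3.1} and~\ref{xxsec3.2}) is required to cancel those corrections and extract a clean, uniform global sign.
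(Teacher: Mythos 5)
Your overall skeleton matches the paper's: reduce to the centralizing trace $y\mapsto\mathfrak{l}(\mathfrak{l}^{-1}(y)^2)$, invoke hypothesis (1) to get $\mathfrak{l}(x^2)=\varepsilon\,\mathfrak{l}(x)^2+\lambda(\mathfrak{l}(x))\mathfrak{l}(x)+\rho(\mathfrak{l}(x))$, correct $\mathfrak{l}$ by a central linear map, and pin down the sign. But there is a genuine gap at the decisive step. You write that, once $\varepsilon=\pm1$, you ``choose the central correction $\mathfrak{n}$ so that $\mathfrak{m}:=\varepsilon(\mathfrak{l}-\mathfrak{n})$ satisfies $\mathfrak{m}(x^2)=\mathfrak{m}(x)^2$.'' No linear choice of $\mathfrak{n}$ can accomplish this: taking $\mathfrak{n}(x)=-\tfrac{1}{2}\varepsilon\lambda(\mathfrak{l}(x))$ kills the term proportional to $\mathfrak{l}(x)$, but leaves a residual central defect $\mathfrak{m}(x^2)-\mathfrak{m}(x)^2=\varepsilon\rho(\mathfrak{l}(x))-\varepsilon\mathfrak{n}(x^2)-\mathfrak{n}(x)^2\in\mathcal{Z}(\mathcal{T}')$ that is quadratic in $x$ and not removable by any further linear recalibration. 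Proving that this symmetric bilinear central defect $\varepsilon(x,y)=\mathfrak{m}(x\circ y)-\mathfrak{m}(x)\circ\mathfrak{m}(y)$ actually vanishes --- not merely lies in the center --- is the technical heart of the theorem: the paper devotes roughly two pages to it, computing $\mathfrak{m}(x\circ(x\circ y))$ and $\mathfrak{m}((xyx)\circ x)$ in two ways, specializing to elements $\mathfrak{l}(x_0)=\left[\smallmatrix a_1&0\\0&0\endsmallmatrix\right]$, $u_0=\left[\smallmatrix a_2&m\\0&0\endsmallmatrix\right]$ with $a_1[a_1,a_2]a_1\neq 0$, and using loyalty of $M'$ together with Lemmas \ref{xxsec3.2} and \ref{xxsec3.3}. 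Your proposal asserts this away.

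Two further points. First, you never establish $\varepsilon\neq 0$; the paper needs this (proved via the polynomial identity $[[x^2,y],[x,y]]$ and hypothesis (2)) both to make the correction meaningful and to pass from $\varepsilon^3(\varepsilon^2-1)=0$ to $\varepsilon=\pm1$ in the domain $\mathcal{Z}(\mathcal{T}')$. Second, your plan derives $\varepsilon^2=1$ \emph{before} knowing $\mathfrak{m}$ is a Jordan homomorphism, by comparing two formulas for $\mathfrak{l}(xyx)$; at that stage every such formula is contaminated by the unknown central defect, so the clean relation $(\varepsilon^2-1)\Phi(x,y)\in\mathcal{Z}(\mathcal{T}')$ you posit is not available. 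The paper's order of operations --- first kill the Jordan defect, then use $[[x,y],z]=x\circ(y\circ z)-y\circ(x\circ z)$ to get $(\varepsilon^2-1)[[\mathfrak{m}(x),\mathfrak{m}(y)],\mathfrak{m}(z)]\in\mathcal{Z}(\mathcal{T}')$ --- is what makes the sign argument work. You correctly sense that the central corrections are the obstacle, but you locate the difficulty in the sign determination rather than where it actually lives, in the vanishing of the Jordan defect.
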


\begin{proof}
For arbitrary $x,z\in \mathcal{T}$, it is easy to see that
$\mathfrak{l}$ satisfies
$[[\mathfrak{l}(x^{2}),\mathfrak{l}(x)],\mathfrak{l}(z)]=\mathfrak{l}([[x^{2},x],z])=0$.
Since $\mathfrak{l}$ is onto,
$[\mathfrak{l}(x^{2}),\mathfrak{l}(x)]\in \mathcal{Z(T^\prime)}$ for
all $x\in \mathcal{T}$. Replacing $x$ by $\mathfrak{l}^{-1}(y)$, we
get $[\mathfrak{l}(\mathfrak{l}^{-1}(y)^{2}),y]\in
\mathcal{Z(T^\prime)}$ for all $y\in \mathcal{T^\prime}$. This means
that the mapping
$\mathfrak{T_q}(y)=\mathfrak{l}(\mathfrak{l}^{-1}(y)^{2})$ is
centralizing. Since $\mathfrak{T_q}$ is also a trace of the bilinear
mapping $\mathfrak{q}\colon \mathcal{T'}\times \mathcal{T^\prime}
\longrightarrow \mathcal{T^\prime}$,
$\mathfrak{q}(y,z)=\mathfrak{l}(\mathfrak{l}^{-1}(y)\mathfrak{l}^{-1}(z))$,
by the hypothesis $(1)$ there exist $\lambda\in
\mathcal{Z(T^\prime)}$, a linear mapping $\mu_{1}:
\mathcal{T^\prime}\longrightarrow \mathcal{Z(T^\prime)}$, and a
trace $\nu_{1}: \mathcal{T^\prime}\longrightarrow
\mathcal{Z(T^\prime)}$ of a bilinear mapping such that
$$
\mathfrak{l}(\mathfrak{l}^{-1}(y)^{2})=\lambda
y^{2}+\mu_{1}(y)y+\nu_{1}(y) \eqno(4.1)
$$
for all $y\in \mathcal{T'}$. Let $\mu=\mu_1\mathfrak{l}$ and
$\nu=\nu_1\mathfrak{l}$. Then $\mu$ and $\nu$ are mappings of
$\mathcal{T}$ into $\mathcal{Z(T^\prime)}$ and $\mu$ is linear.
Hence $(4.1)$ can be rewritten as
$$
\mathfrak{l}(x^{2})=\lambda\mathfrak{l}(x)^{2}+\mu(x)\mathfrak{l}(x)+\nu(x)
\eqno(4.2)
$$
for all $x\in \mathcal{T}$. We conclude that $\lambda\neq 0$.
Otherwise, we have $\mathfrak{l}(x^{2})-\mu(x)\mathfrak{l}(x)\in
\mathcal{Z(T^\prime)}$ by $(4.2)$ and hence
$$
\begin{aligned}
\mathfrak{l}([[x^2,y],[x,y]])
&=[[\mathfrak{l}(x^2),\mathfrak{l}(y)],\mathfrak{l}([x,y])]\\
&=[[\mu(x)\mathfrak{l}(x),\mathfrak{l}(y)],\mathfrak{l}([x,y])] \\
&=\mu(x)[[\mathfrak{l}(x),\mathfrak{l}(y)],\mathfrak{l}([x,y])]\\
&=\mu(x)\mathfrak{l}([[x,y],[x,y]])\\
&=0
\end{aligned}
$$
for all $x,y\in \mathcal{T}$. Consequently, $[[x^2,y],[x,y]]=0$ for
all $x,y\in \mathcal{T}$. According to our assumption this
contradicts with \cite[Lemma 2.7]{BenkovicEremita1}. Thus
$\lambda\neq 0$.

Now we define a linear mapping $\mathfrak{m}\colon
\mathcal{T}\rightarrow \mathcal{T'}$ by
$$
\mathfrak{m}(x)=\lambda\mathfrak{l}(x)+\frac{1}{2}\mu(x) \eqno(4.3)
$$
for the $x\in \mathcal{T}$. Of course, $\mathfrak{m}$ is a linear
mapping. Our goal is to show that $\mathfrak{m}$ is a Jordan
homomorphism. In view of $(4.2)$ and $(4.3)$, we have
$$
\mathfrak{m}(x^{2})=\lambda\mathfrak{l}(x^{2})+\frac{1}{2}\mu(x)=
\lambda^{2}\mathfrak{l}(x)^{2}+\lambda\mu(x)\mathfrak{l}(x)+\lambda\nu(x)+\frac{1}{2}\mu(x^{2}),
$$
while
$$
\mathfrak{m}(x)^{2}=(\lambda\mathfrak{l}(x)+\frac{1}{2}\mu(x))^{2}
=\lambda^{2}\mathfrak{l}(x)^{2}+\lambda\mu(x)\mathfrak{l}(x)+\frac{1}{4}\mu(x)^{2}.
$$
Comparing the above two identities we get
$$
\mathfrak{m}(x^{2})-\mathfrak{m}(x)^{2}\in \mathcal{Z(T^\prime)}
\eqno(4.4)
$$
for all $x\in \mathcal{T}$. Linearizing $(4.4)$ we obtain
$$
\mathfrak{m}(x\circ y)-\mathfrak{m}(x)\circ \mathfrak{m}(y)\in
\mathcal{Z(T^\prime)}
$$
for all $x, y\in \mathcal{T}$. Define the mapping $\varepsilon:
\mathcal{T}\times \mathcal{T}\rightarrow \mathcal{Z(T')}$ by
$$
\varepsilon(x,y)=\mathfrak{m}(x\circ y)-\mathfrak{m}(x)\circ
\mathfrak{m}(y). \eqno(4.5)
$$
Clearly, $\varepsilon$ is a symmetric bilinear mapping. Of course,
$\mathfrak{m}$ is a Jordan homomorphism if and only if
$\varepsilon(x, y)=0$ for all $x, y\in \mathcal{T}$. For any $x,y\in
\mathcal{T}$, let us put $W=\mathfrak{m}(x\circ (x\circ y))$. By
$(4.5)$ we have
$$
\begin{aligned}
W&=\mathfrak{m}(x)\mathfrak{m}(x\circ y)+\mathfrak{m}(x\circ y)\mathfrak{m}(x)+\varepsilon(x,x\circ y) \\
&=\mathfrak{m}(x)\{\mathfrak{m}(x)\circ \mathfrak{m}(y)+\varepsilon(x,y)\}+[\mathfrak{m}(x)\circ\mathfrak{m}(y)+\varepsilon(x,y)]\mathfrak{m}(x)+\varepsilon(x,x\circ y) \\
&=\mathfrak{m}(x)^{2}\mathfrak{m}(y)+2\mathfrak{m}(x)\mathfrak{m}(y)\mathfrak{m}(x)+\mathfrak{m}(y)\mathfrak{m}(x)^{2}+2\varepsilon(x,y)\mathfrak{m}(x)+\varepsilon(x,x\circ
y).
\end{aligned}
$$
On the other hand
$$
\begin{aligned}
W&=2\mathfrak{m}(xyx)+\mathfrak{m}(x^{2}\circ y) \\
&=2\mathfrak{m}(xyx)+\mathfrak{m}(x^{2})\circ \mathfrak{m}(y)+\varepsilon(x^{2},y) \\
&=2\mathfrak{m}(xyx)+[\mathfrak{m}(x^{2})+\frac{1}{2}\varepsilon(x,x)]\mathfrak{m}(y) \\
&\quad +\mathfrak{m}(y)[\mathfrak{m}(x^{2})+\frac{1}{2}\varepsilon(x,x)]+\varepsilon(x^{2},y) \\
&=2\mathfrak{m}(xyx)+\mathfrak{m}(x)^{2}\mathfrak{m}(y)+\mathfrak{m}(y)\mathfrak{m}(x)^{2} \\
&\quad +\varepsilon(x,x)\mathfrak{m}(y)+\varepsilon(x^{2},y).
\end{aligned}
$$
Comparing the above two relations gives
$$
\begin{aligned}
\mathfrak{m}(xyx)&=\mathfrak{m}(x)\mathfrak{m}(y)\mathfrak{m}(x)+\varepsilon(x,y)\mathfrak{m}(x)-\frac{1}{2}\varepsilon(x,x)\mathfrak{m}(y) \\
&\quad +\frac{1}{2}\varepsilon(x,x\circ
y)\mathfrak{m}(y)-\frac{1}{2}\varepsilon(x^{2},y).
\end{aligned}\eqno(4.6)
$$
By completing linearization of $(4.6)$ we obtain
$$
\begin{aligned}
\mathfrak{m}(xyz+zyx)&=\mathfrak{m}(x)\mathfrak{m}(y)\mathfrak{m}(z)+\mathfrak{m}(z)\mathfrak{m}(y)\mathfrak{m}(x)+\varepsilon(x,y)\mathfrak{m}(z) \\
&\quad +\varepsilon(z,y)\mathfrak{m}(x)-\varepsilon(x,z)\mathfrak{m}(y)+\frac{1}{2}\varepsilon(x,z\circ y) \\
&\quad +\frac{1}{2}\varepsilon(z,x\circ
y)-\frac{1}{2}\varepsilon(x\circ z,y).
\end{aligned}\eqno(4.7)
$$
Let us consider $U=\mathfrak{m}(xyx^{2}+x^{2}yx)$. By $(4.7)$ we
know that
$$
\begin{aligned}
U&=\mathfrak{m}(x)\mathfrak{m}(y)\mathfrak{m}(x^{2})+\mathfrak{m}(x^{2})\mathfrak{m}(y)\mathfrak{m}(x)+\varepsilon(x,y)\mathfrak{m}(x^{2}) \\
&\quad +\varepsilon(x^{2},y)\mathfrak{m}(x)-\varepsilon(x,x^{2})\mathfrak{m}(y)+\frac{1}{2}\varepsilon(x,x^{2}\circ y) \\
&\quad +\frac{1}{2}\varepsilon(x^{2},x\circ
y)-\frac{1}{2}\varepsilon(x^{3},y).
\end{aligned}
$$
Since
$\mathfrak{m}(x^{2})=\mathfrak{m}(x)^{2}+\frac{1}{2}\varepsilon(x,x)$,
we get
$$
\begin{aligned}
U&=\mathfrak{m}(x)\mathfrak{m}(y)\mathfrak{m}(x)^{2}+\mathfrak{m}(x)^{2}\mathfrak{m}(y)\mathfrak{m}(x)+\varepsilon(x,x)\mathfrak{m}(x)\mathfrak{m}(y) \\
&\quad +\frac{1}{2}\varepsilon(x,x)\mathfrak{m}(y)\mathfrak{m}(x)+\varepsilon(x,y)\mathfrak{m}(x)^{2}+\varepsilon(x^{2},y)\mathfrak{m}(x) \\
&\quad -\varepsilon(x,x^{2})\mathfrak{m}(y)+\frac{1}{2}\varepsilon(x,y)\varepsilon(x,x)+\frac{1}{2}\varepsilon(x,x^{2}\circ y) \\
&\quad +\frac{1}{2}\varepsilon(x^{2},x\circ y)-\varepsilon(x^{3},y).
\end{aligned}
$$
On the other hand, using $(4.5)$ and $(4.6)$ we have
$$
\begin{aligned}
U&=\mathfrak{m}((xyx)\circ x) \\
&=\mathfrak{m}(xyx)\circ \mathfrak{m}(x)+\varepsilon(xyx,x) \\
&=\mathfrak{m}(x)\mathfrak{m}(y)\mathfrak{m}(x)^{2}+\mathfrak{m}(x)^{2}\mathfrak{m}(y)\mathfrak{m}(x)+2\varepsilon(x,y)\mathfrak{m}(x)^{2} \\
&\quad -\frac{1}{2}\varepsilon(x,x)(\mathfrak{m}(y)\circ
\mathfrak{m}(x))+\varepsilon(x,x\circ
y)\mathfrak{m}(x)-\varepsilon(x^{2},y)\mathfrak{m}(x)+\varepsilon(xyx,x).
\end{aligned}
$$
Comparing the above two relations yields
$$
\begin{aligned}
&\varepsilon(x,x)\mathfrak{m}(x)\circ \mathfrak{m}(y)-\varepsilon(x,y)\mathfrak{m}(x)^{2}-\varepsilon(x,x^{2})\mathfrak{m}(y) \\
&\quad +(2\varepsilon(x^{2},y)-\varepsilon(x,x\circ
y))\mathfrak{m}(x)\in \mathcal{Z(T^\prime)}
\end{aligned}\eqno(4.8)
$$
for all $x, y\in \mathcal{T}$. In particular, if $x=y$, we obtain
$$
\varepsilon(x,x)\mathfrak{m}(x)^{2}-\varepsilon(x,x^{2})\mathfrak{m}(x)\in
\mathcal{Z(T^\prime)} \eqno(4.9)
$$
for all $x\in \mathcal{T}$. Therefore
$$
\varepsilon(x,x)[[\mathfrak{m}(x)^{2},u],[\mathfrak{m}(x),u]]=0
$$
for all $x\in \mathcal{T}, u\in \mathcal{T'}$, which can be in view
of $(4.3)$ rewritten as
$$
\lambda^{3}\varepsilon(x,x)[[\mathfrak{l}(x)^{2},u],[\mathfrak{l}(x),u]]=0.
$$
We may assume that $A'$ is noncommutative. Pick $a_1,a_2\in A'$ such
that $a_1[a_1,a_2]a_1\neq 0$ (see the proof of \cite[Lemma
2.7]{BenkovicEremita1}). Setting
$$
\mathfrak{l}(x_{0})=\left[\begin{array}{cc}
a_{1} & 0\\
& 0\end{array}\right]\quad \text{and}\quad
u_{0}=\left[\begin{array}{cc}
a_{2} & m\\
& 0\end{array}\right]
$$
for some $x_{0}\in \mathcal{T}$ and an arbitrary $m\in  M'$ in the
relation
$\lambda^{3}\varepsilon(x,x)[[\mathfrak{l}(x)^{2},u],[\mathfrak{l}(x),u]]=0$,
we arrive at
$$
\pi_{A'}(\lambda^{3}\varepsilon(x_{0},x_{0}))a_{1}[a_{1},a_{2}]a_{1}m=0
$$
for all $m\in M^\prime$. By the loyality of $M^\prime$ it follows
that
$\pi_{A'}(\lambda^{3}\varepsilon(x_{0},x_{0}))a_{1}[a_{1},a_{2}]a_{1}=0$.
Hence $\pi_{A'}(\lambda^{3}\varepsilon(x_{0},x_{0}))=0$ by Lemma
\ref{xxsec3.2}. This shows that
$\lambda^{3}\varepsilon(x_{0},x_{0})=0$. Since $\lambda\neq 0$,
$\varepsilon(x_{0},x_{0})=0$ by Lemma \ref{xxsec3.3}. Taking $\varepsilon(x_{0},x_{0})=0$ into $(4.9)$ and then making the commutator with $u_0$ we can obtain $\varepsilon(x_{0},x_{0}^{2})=0$. From $(4.8)$ we get
$$
\varepsilon(x_{0},y)\mathfrak{m}(x_{0})^2
+[-2\varepsilon(x_0^2, y)+\varepsilon(x_{0},x_{0}\circ y)]\mathfrak{m}(x_{0})\in
\mathcal{Z(T^\prime)} \eqno(4.10)
$$
for all $y\in \mathcal{T}$. Let us commute the above relation with $u_0$ and then
with $[\mathfrak{m}(x_{0}), u_0]$ in order. We will eventually observe that $\varepsilon(x_{0},y)=0$ for all $y\in \mathcal{T}$.
Then the equation $(4.10)$ shows that $2\varepsilon(x_0^2, y)\mathfrak{m}(x_{0})\in
\mathcal{Z(T^\prime)}$ for all $y\in \mathcal{T}$. Therefore $\varepsilon(x_0^2, y)[\mathfrak{m}(x_{0}), u_0]=0$
and hence $\varepsilon(x_0^2, y)=0$ for all $y\in \mathcal{T}$.

Next, we assert that $\varepsilon(x,y)=0$. Substituting $x_{0}+y$ for
$x$ by in $(4.9)$ and using the fact $\varepsilon(x_{0},y)=0$, we have
$$
\begin{aligned}
&\varepsilon(y,y)\mathfrak{m}(x_{0})^{2}+\varepsilon(y,y)\mathfrak{m}(x_{0})\circ \mathfrak{m}(y)-\varepsilon(y,(x_{0}+y)^{2})\mathfrak{m}(x_{0}) \\
&-\varepsilon(y,x_{0}\circ y)\mathfrak{m}(y)\in \mathcal{Z(T^\prime)}.
\end{aligned}
$$
On the other hand, replacing $x$ by $-x_{0}+y$ in $(4.9)$ we get
$$
\begin{aligned}
&\varepsilon(y,y)\mathfrak{m}(x_{0})^{2}-\varepsilon(y,y)\mathfrak{m}(x_{0})\circ \mathfrak{m}(y)+\varepsilon(y,(x_{0}-y)^{2})\mathfrak{m}(x_{0}) \\
&+\varepsilon(y,x_{0}\circ y)\mathfrak{m}(y)\in \mathcal{Z(T^\prime)}.
\end{aligned}
$$
Comparing the two relations it follows that
$$
\varepsilon(y,y)\mathfrak{m}(x_{0})^{2}
-\varepsilon(y,x_{0}\circ
y)\mathfrak{m}(x_{0})\in \mathcal{Z(T^\prime)}.
$$
Commuting with $u_{0}$ and then with $[\mathfrak{m}(x_{0}),u_{0}]$, in view of $(4.3)$ the above relation becomes
$$
\varepsilon(y,y)[[\mathfrak{l}(x_{0})^{2},u_{0}],[\mathfrak{l}(x_{0}),u_{0}]]=0.
\eqno(4.11)
$$
Furthermore, $\varepsilon(y,y)=0$ for all $y\in \mathcal{T}$.
Hence $\varepsilon=0$ by the symmetry of $\varepsilon$. This shows that $\mathfrak{m}$ is a Jordan
homomorphism.

We claim that $\lambda=\pm 1$. By $(4.3)$ it follows that
$$
\begin{aligned}
\lambda^{2}\mathfrak{m}([[x,y],z])&=\lambda^{3}\mathfrak{l}([[x,y],z])+\frac{1}{2}\lambda^{2}\mu([[x,y],z]) \\
&=[[\mathfrak{m}(x),\mathfrak{m}(y)],\mathfrak{m}(z)]+\frac{1}{2}\lambda^{2}\mu([[x,y],z])
\end{aligned}
$$
for all $x,y,z\in \mathcal{T}$. Moreover, we get
$$
\lambda^{2}\mathfrak{m}([[x,y],z])-[[\mathfrak{m}(x),\mathfrak{m}(y)],
\mathfrak{m}(z)]\in \mathcal{Z(T^\prime)} \eqno(4.12)
$$
for all $x,y,z\in \mathcal{T}$. Considering $(4.12)$ and using the
facts $\mathfrak{m}(x\circ y)=\mathfrak{m}(x)\circ \mathfrak{m}(y)$ and
$[[x,y],z]=x\circ (y\circ z)-y\circ (x\circ z)$, we conclude that
$$
(\lambda^{2}-1)[[\mathfrak{m}(x),\mathfrak{m}(y)],\mathfrak{m}(z)]\in
\mathcal{Z(T^\prime)}.
$$
for all $x,y,z\in \mathcal{T}$. By $(4.3)$ we know that
$\lambda^{3}(\lambda^{2}-1)\mathfrak{l}([[x,y],z])\in
\mathcal{Z(T^\prime)}$. Since $x,y,z$ are arbitrary elements in
$\mathcal{T}$ and $\mathfrak{l}$ is bijective, we eventually obtain
$\lambda^{3}(\lambda^{2}-1)=0$. Since $\lambda\neq 0$, we get
$\lambda=\pm 1$.

Let us put $\mathfrak{n}(x)=-\frac{1}{2}\mu(x)$. When $\lambda=1$, then $\mathfrak{l}=\mathfrak{m}+\mathfrak{n}$. It is easy to verify that $\mathfrak{n}([[x, y], z])=0$ for all $x, y, z\in \mathcal{T}$. Note that $\mathfrak{m}$ is a Jordan homomorphism from $\mathcal{T}$ into $\mathcal{T}^\prime$ and hence is a Lie triple homomorphism from $\mathcal{T}$ into $\mathcal{T}^\prime$. When $\lambda=-1$, then $\mathfrak{n}=\mathfrak{l}+\mathfrak{m}$ is a Lie triple homomorphism from
$\mathcal{T}$ into $\mathcal{Z(T^\prime)}$. Therefore $\mathfrak{n}([[x, y], z])=0$ for all $x, y, z\in \mathcal{T}$.

We have to prove that $\mathfrak{m}$ is one-to-one. Suppose that
$\mathfrak{m}(w)=0$ for some $w\in \mathcal{T}$. Then
$\mathfrak{l}(w)\in \mathcal{Z(T^\prime)}$ and hence $w\in
\mathcal{Z(T)}$. This implies that ${\rm ker}(\mathfrak{m})\subseteq
\mathcal{Z(T)}$. That is, ${\rm ker}(\mathfrak{m})$ is a Jordan
ideal of $\mathcal{Z(T)}$. However, by Lemma \ref{xxsec4.1} it
follows that ${\rm ker}(\mathfrak{m})=0$.

It remains to prove that $\mathfrak{m}$ is onto in case
$\mathcal{T^\prime}$ is central over $\mathcal{R}$. Let us first
show that $\mathfrak{m}(1)=1^\prime$. Since $\mathfrak{l}$ is a Lie
triple isomorphism, we have $\mathfrak{l}(1)\in
\mathcal{Z(T^\prime)}$ and hence
$\mathfrak{m}(1)=\mathfrak{l}(1)-\mathfrak{n}(1)\in \mathcal{Z(T^\prime)}$.
Note that $\mathfrak{m}$ is a Jordan homomorphism. We see that
$2\mathfrak{m}(x)=\mathfrak{m}(x\circ
1)=2\mathfrak{m}(x)\mathfrak{m}(1)$. Since $\frac{1}{2}\in
\mathcal{R}$, $(\mathfrak{m}(1)-1^{'})\mathfrak{m}(x)=0$, which can
be rewritten as $(\mathfrak{m}(1)-1^{'})\mathfrak{l}(x)\in
\mathcal{Z(T^\prime)}$. Then
$(\mathfrak{m}(1)-1')[\mathfrak{l}(x),s]=0$ for all $s\in
\mathcal{T'}$. Therefore
$(\mathfrak{m}(1)-1')[\mathcal{T'},\mathcal{T'}]=0$. Consequently,
$\pi_{A'}(\mathfrak{m}(1)-1')[A',A']=0$. This implies that
$\pi_{A'}(\mathfrak{m}(1)-1')=0$ and so $\mathfrak{m}(1)=1'$.Obviously, we may write $\mathfrak{n}(x)=f(x)1^\prime$ for some linear mapping $f: \mathcal{T}\longrightarrow \mathcal{R}$. Since $\mathfrak{m}$ is
$\mathcal{R}$-linear, we obtain that
$\mathfrak{l}(x)=\pm\mathfrak{m}(x)+f(x)1'=\mathfrak{m}(\pm
x+f(x)1)$ for all $x\in \mathcal{T}$. Consequently $\mathfrak{m}$
is onto, since $\mathfrak{l}$ is bijective. The proof of the theorem is thus completed.
\end{proof}

It would be helpful to point out that the proof just given in its first part is
a modification of that of \cite[Theorem 2]{Bresar2} and we express it explicitly here for completeness.
By a slight modification of this proof one could easily check the following
proposition holds true.

\begin{proposition}\label{xxsec4.3}
Let $\mathcal{T}$ and $\mathcal{T}^\prime$ be central unital algebras
over a field $F$ with ${\rm char}(F)\neq 2$ and $\mathfrak{l}\colon
\mathcal{T}\longrightarrow \mathcal{T}^\prime$ be a Lie triple isomorphism.
If

\begin{enumerate}
\item[{\rm(1)}] each centralizing trace of a bilinear mapping on $\mathcal{T}^\prime$ is proper,

\item[{\rm(2)}] $\mathcal{T}$ and $\mathcal{T}^\prime$ do not satisfy the polynomial identity $[[x^2, y], [x, y]]$,

\item[{\rm(3)}] $\mathcal{T}^\prime$ does not satisfy the polynomial identity $[x, [y, w]]$,
\end{enumerate}
then $\mathfrak{l}=\pm \mathfrak{m}+\mathfrak{n}$, where
$\mathfrak{m}\colon \mathcal{T}\rightarrow \mathcal{T'}$ is a Jordan
isomorphism and $\mathfrak{n}\colon
\mathcal{T}\longrightarrow \mathcal{Z(T^\prime)}$ is a linear
mapping vanishing on each second commutator.
\end{proposition}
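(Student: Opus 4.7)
The plan is to mirror the proof of Theorem~\ref{xxsec4.2}, replacing every appeal to the triangular/loyalty structure of the target by the polynomial-identity hypotheses (2) and (3). First I would apply hypothesis~(1) to the centralizing trace $\mathfrak{T_q}(y) = \mathfrak{l}(\mathfrak{l}^{-1}(y)^2)$ to obtain, exactly as in (4.2), the identity
$$\mathfrak{l}(x^2) = \lambda\,\mathfrak{l}(x)^2 + \mu(x)\mathfrak{l}(x) + \nu(x),$$
with $\lambda \in \mathcal{Z(T')} = F$, $\mu\colon \mathcal{T}\to F$ linear, and $\nu$ a scalar-valued trace. The verification that $\lambda \neq 0$ is the same computation as in Theorem~\ref{xxsec4.2}, except that the appeal to \cite[Lemma~2.7]{BenkovicEremita1} is replaced by the direct use of hypothesis~(2) on $\mathcal{T}$.

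Next I would define $\mathfrak{m}(x) = \lambda\mathfrak{l}(x) + \tfrac{1}{2}\mu(x)$ and the symmetric bilinear obstruction $\varepsilon(x,y) = \mathfrak{m}(x\circ y) - \mathfrak{m}(x)\circ \mathfrak{m}(y)$, which takes values in $F$. Following verbatim the Jordan-triple-product computations of Theorem~\ref{xxsec4.2} leads to
$$\varepsilon(y,y)\,[[\mathfrak{l}(x_0)^2, u_0], [\mathfrak{l}(x_0), u_0]] = 0.$$
At this point the original proof uses loyalty of $M'$ together with an explicit triangular choice of $x_0, u_0$; in our setting I would instead invoke hypothesis~(2) to pick $y_0, u_0 \in \mathcal{T}'$ with $[[y_0^2, u_0], [y_0, u_0]] \neq 0$ and set $x_0 = \mathfrak{l}^{-1}(y_0)$. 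Because $\varepsilon(y,y) \in F$ acts on the $F$-vector space $\mathcal{T}'$ by scalar multiplication, the displayed equation forces $\varepsilon(y,y) = 0$ and hence $\varepsilon \equiv 0$ by polarization (using ${\rm char}(F)\neq 2$). Thus $\mathfrak{m}$ is a Jordan homomorphism.

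The main obstacle is forcing $\lambda = \pm 1$. The computation leading to (4.12) is purely structural and yields in our context
$$\lambda^3(\lambda^2-1)\,[[a,b],c] \in F \quad\text{for all } a, b, c \in \mathcal{T}',$$
so the task is to rule out $\lambda^3(\lambda^2-1) \neq 0$. Assuming that scalar is a unit in $F$, we would conclude $[[a,b],c] \in F$ for every $a, b, c \in \mathcal{T}'$; expanding $[[a,b], cd] = [[a,b], c]\,d + c\,[[a,b], d]$ and using that all three quantities $[[a,b],c]$, $[[a,b],d]$, $[[a,b],cd]$ lie in $F$ produces an $F$-linear dependence among $1, c, d$. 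A short case distinction shows that either $\mathcal{T}'$ is commutative or all brackets $[[a,b],c]$ vanish; both outcomes give $[a,b] \in F$ for every $a, b \in \mathcal{T}'$, contradicting hypothesis~(3). Hence $\lambda \in \{+1,-1\}$.

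Finally, setting $\mathfrak{n} = \mathfrak{l} \mp \mathfrak{m}$, one checks as in Theorem~\ref{xxsec4.2} that $\mathfrak{n}$ is $F$-linear, ranges in $\mathcal{Z(T')}$, and annihilates every second commutator. Injectivity of $\mathfrak{m}$ is handled by the observation that $\ker(\mathfrak{m}) \subseteq \mathcal{Z(T)} = F$ is a Jordan ideal of $\mathcal{T}$: any nonzero $\alpha$ in it would satisfy $2\alpha x = \alpha\circ x \in F$ for every $x \in \mathcal{T}$, forcing $\mathcal{T} = F$, which is excluded by~(3) (inherited through $\mathfrak{l}$). Surjectivity follows from centrality of $\mathcal{T}'$ exactly as in the closing paragraph of Theorem~\ref{xxsec4.2}: prove $\mathfrak{m}(1) = 1'$ (the alternative $\mathfrak{m}(1) = 0$ forces $\mathfrak{m}\equiv 0$, contradicting $\lambda \neq 0$), write $\mathfrak{n}(x) = f(x)\cdot 1'$, and conclude $\mathfrak{l}(x) = \mathfrak{m}(\pm x + f(x)\cdot 1)$.
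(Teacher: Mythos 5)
Your proposal is correct and is essentially the argument the paper intends: the paper gives no separate proof of Proposition \ref{xxsec4.3}, stating only that it follows ``by a slight modification'' of the proof of Theorem \ref{xxsec4.2}, and your write-up supplies exactly those modifications --- replacing the loyalty/triangular choices of $x_0,u_0$ by a witness to the failure of $[[x^2,y],[x,y]]$ in $\mathcal{T}'$, using centrality so that the scalar $\varepsilon(y,y)$ annihilates a nonzero vector, and deriving the contradiction with hypothesis (3) to force $\lambda=\pm1$. The only cosmetic slips (citing (3) rather than (2) for the injectivity step, and phrasing the alternative as $\mathfrak{m}(1)=0$ rather than $\mathfrak{m}(1)-1'$ being an invertible scalar) do not affect the argument.
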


We are in a position to state the main result of this section, which follows from Theorem \ref{xxsec3.4}
and Theorem \ref{xxsec4.2}.

\begin{theorem}\label{xxsec4.4}
Let $\mathcal{T}=\left[\smallmatrix A & M\\
O & B \endsmallmatrix \right]$ and $\mathcal{T^\prime}=\left[\smallmatrix A^\prime & M^\prime\\
O & B^\prime \endsmallmatrix \right]$ be two triangular algebras over $\mathcal{R}$ with
$\frac{1}{2}\in\mathcal{R}$. Let $\mathfrak{l}\colon
\mathcal{T}\longrightarrow\mathcal{T'}$ be a Lie triple isomorphism. If
\begin{enumerate}
\item[{\rm(1)}] each commuting linear mapping on $A'$ or $B'$ is proper,
\item[{\rm(2)}] $\pi_{A'}(\mathcal{Z(T^\prime)})={\mathcal Z}(A')\neq A'$ and
$\pi_{B'}(\mathcal{Z(T^\prime)})={\mathcal Z}(B')\neq B'$,
\item[{\rm(3)}] either $A$ or $B$ is noncommutative,
\item[{\rm(4)}] $M'$ is loyal,
\end{enumerate}
then $\mathfrak{l}=\pm \mathfrak{m}+\mathfrak{n}$, where
$\mathfrak{m}\colon \mathcal{T}\rightarrow \mathcal{T'}$ is a Jordan
homomorphism, $\mathfrak{m}$ is one-to-one, and $\mathfrak{n}\colon
\mathcal{T}\longrightarrow \mathcal{Z(T^\prime)}$ is a linear
mapping vanishing on each second commutator. Moreover, if
$\mathcal{T^\prime}$ is central over $\mathcal{R}$, then
$\mathfrak{m}$ is onto.
\end{theorem}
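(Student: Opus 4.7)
The plan is to observe that Theorem \ref{xxsec4.4} is essentially the combined consequence of Theorem \ref{xxsec3.4} and Theorem \ref{xxsec4.2}, so the main task is verifying that the hypotheses of those two theorems transfer correctly from the hypotheses we are given.

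First I would apply Theorem \ref{xxsec3.4} to the target triangular algebra $\mathcal{T}^\prime$. Since $\frac{1}{2}\in\mathcal{R}$, the algebra $\mathcal{T}^\prime$ is automatically $2$-torsion free. Condition (1) of Theorem \ref{xxsec4.4} says each commuting linear mapping on $A^\prime$ or $B^\prime$ is proper, condition (2) gives $\pi_{A^\prime}(\mathcal{Z(T^\prime)})=\mathcal{Z}(A^\prime)\neq A^\prime$ and $\pi_{B^\prime}(\mathcal{Z(T^\prime)})=\mathcal{Z}(B^\prime)\neq B^\prime$, and condition (4) gives that $M^\prime$ is loyal. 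These are precisely the three hypotheses of Theorem \ref{xxsec3.4}, applied to $\mathcal{T}^\prime$. Therefore every centralizing trace of any $\mathcal{R}$-bilinear mapping $\mathfrak{q}\colon \mathcal{T}^\prime\times\mathcal{T}^\prime\to\mathcal{T}^\prime$ is proper.

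Next I would feed this into Theorem \ref{xxsec4.2}. The first hypothesis there, namely that each centralizing trace of a bilinear mapping on $\mathcal{T}^\prime$ is proper, is exactly what was just obtained. For the second hypothesis, condition (3) of Theorem \ref{xxsec4.4} directly gives that at least one of $A, B$ is noncommutative. On the target side, condition (2) forces $\mathcal{Z}(A^\prime)\neq A^\prime$ (and $\mathcal{Z}(B^\prime)\neq B^\prime$), which means $A^\prime$ (and $B^\prime$) is noncommutative; in particular at least one of $A^\prime, B^\prime$ is noncommutative. Finally, condition (4) provides loyalty of $M^\prime$, which is the third hypothesis of Theorem \ref{xxsec4.2}.

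With all hypotheses of Theorem \ref{xxsec4.2} verified, its conclusion gives the decomposition $\mathfrak{l}=\pm\mathfrak{m}+\mathfrak{n}$ where $\mathfrak{m}\colon\mathcal{T}\to\mathcal{T}^\prime$ is an injective Jordan homomorphism and $\mathfrak{n}\colon\mathcal{T}\to\mathcal{Z(T^\prime)}$ is an $\mathcal{R}$-linear mapping annihilating all second commutators. The additional surjectivity statement in Theorem \ref{xxsec4.2}, under the extra assumption that $\mathcal{T}^\prime$ is central over $\mathcal{R}$, likewise carries over. Since the entire argument is a direct chaining of the two results, there is no serious obstacle; the only point that deserves a brief sentence of justification in the write-up is the extraction of noncommutativity of $A^\prime$ (or $B^\prime$) from the centre equality in hypothesis (2), which is needed so that the noncommutativity hypothesis of Theorem \ref{xxsec4.2}(2) is satisfied on the target side.
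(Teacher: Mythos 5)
Your proposal is correct and is essentially the paper's own argument: the paper proves Theorem \ref{xxsec4.4} precisely by chaining Theorem \ref{xxsec3.4} (applied to $\mathcal{T}^\prime$) with Theorem \ref{xxsec4.2}, and your verification of the hypothesis transfer --- including the observation that $\mathcal{Z}(A^\prime)\neq A^\prime$ forces $A^\prime$ to be noncommutative and that $\tfrac{1}{2}\in\mathcal{R}$ gives $2$-torsion freeness --- is exactly the bookkeeping the paper leaves implicit.
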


Beidar, Bre\v{s}ar and Chebotar in \cite{BeidarBresarChebotar1} characterized Jordan isomorphisms of triangular matrix
algebras over a connected commutative ring and obtained the following result. Let $\mathcal{R}$ be a $2$-torsionfree commutative ring with identity $1$ and $\mathcal{T}_n(\mathcal{R})(n\geq 2)$ be the
algebra of all upper triangular $n\times n (n\geq 2)$ matrices over $\mathcal{R}$. Then $\mathcal{R}$ contains no idempotents
except $0$ and $1$ (or equivalently, $\mathcal{R}$ is a connected ring) if and only if every Jordan isomorphism of $\mathcal{T}_n(\mathcal{R})$ onto an arbitrary algebra
over $\mathcal{R}$ is either an isomorphism or an anti-isomorphism. Wong \cite{Wong} extended the previous result by proving that if $\mathcal{T}$ is a $2$-torsion free unital indecomposable triangular algebra, then every Jordan isomorphism from $\mathcal{T}$ onto another algebra is either an isomorphism
or an anti-isomorphism.

\begin{corollary}\label{xxsec4.5}
Let $\mathcal{R}$ be a commutative domain
with $\frac{1}{2}\in \mathcal{R}$ and $\mathcal{T}_n(\mathcal{R})\ (n\geq 2)$ be the algebra of all $n\times n$ upper triangular matrices over $\mathcal{R}$. If ${\mathfrak l}\colon \mathcal{T}_n(\mathcal{R})\longrightarrow \mathcal{T}_n(\mathcal{R})$ is a Lie triple isomorphism, then $\mathfrak{l}=\pm \mathfrak{m}+\mathfrak{n}$, where
$\mathfrak{m}\colon \mathcal{T}_n(\mathcal{R})$ $\longrightarrow \mathcal{T}_n(\mathcal{R})$ is an
isomorphism or an anti-isomorphism and $\mathfrak{n}\colon
\mathcal{T}_n(\mathcal{R})\longrightarrow \mathcal{R}1$ is a linear
mapping vanishing on each second commutator.
\end{corollary}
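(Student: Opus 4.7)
The plan is to apply Theorem~\ref{xxsec4.2} to $\mathfrak{l}$, with $\mathcal{T}=\mathcal{T}^\prime=\mathcal{T}_n(\mathcal{R})$ realized as a triangular algebra in a convenient way, to use Corollary~\ref{xxsec3.12} to verify the centralizing trace hypothesis, and then to upgrade the resulting Jordan homomorphism by means of the Beidar--Bre\v{s}ar--Chebotar classification (or Wong's generalization) of Jordan isomorphisms of upper triangular matrix algebras recalled just before the corollary.

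Concretely, for $n\geq 3$ I would realize $\mathcal{T}_n(\mathcal{R})$ as the triangular algebra $\left[\begin{smallmatrix} A & M \\ 0 & B \end{smallmatrix}\right]$ with $A=\mathcal{R}$, $B=\mathcal{T}_{n-1}(\mathcal{R})$, and off-diagonal bimodule $M=\mathcal{R}^{1\times(n-1)}$, and verify the hypotheses of Theorem~\ref{xxsec4.2}: hypothesis (1), that every centralizing trace of a bilinear mapping on $\mathcal{T}_n(\mathcal{R})$ is proper, is exactly Corollary~\ref{xxsec3.12}; hypothesis (2) holds because $B$ is noncommutative when $n\geq 3$; and hypothesis (3), the loyalty of $M$, is immediate from $\mathcal{R}$ being a domain. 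Since $\mathcal{Z}(\mathcal{T}_n(\mathcal{R}))=\mathcal{R}\cdot I_n$, the algebra is central over $\mathcal{R}$, so Theorem~\ref{xxsec4.2} produces $\mathfrak{l}=\pm\mathfrak{m}+\mathfrak{n}$ with $\mathfrak{m}\colon\mathcal{T}_n(\mathcal{R})\to\mathcal{T}_n(\mathcal{R})$ a bijective Jordan homomorphism and $\mathfrak{n}$ mapping into $\mathcal{R}\cdot 1$ while annihilating every second commutator.

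At that point $\mathcal{R}$, being a commutative domain, is automatically a connected ring with no nontrivial idempotents, so the Beidar--Bre\v{s}ar--Chebotar theorem (or Wong's extension) applies and forces $\mathfrak{m}$ to be either an algebra isomorphism or an algebra anti-isomorphism, which promotes the decomposition to the asserted standard form of $\mathfrak{l}$.

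The main obstacle I anticipate is the borderline case $n=2$: every triangular realization of $\mathcal{T}_2(\mathcal{R})$ has both corners equal to $\mathcal{R}$, so hypothesis (2) of Theorem~\ref{xxsec4.2} fails and the main-line argument does not apply directly. I would deal with this case separately by a short direct computation, exploiting the facts that both $[\mathcal{T}_2(\mathcal{R}),\mathcal{T}_2(\mathcal{R})]$ and the set of second commutators are contained in the rank-one $\mathcal{R}$-submodule $\mathcal{R}e_{12}$ and that $\mathcal{T}_2(\mathcal{R})$ is only three-dimensional over $\mathcal{R}$, which makes the shape of any Lie triple isomorphism completely explicit.
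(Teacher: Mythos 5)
Your proposal is correct and takes essentially the same route as the paper: for $n>2$ the paper likewise realizes $\mathcal{T}_n(\mathcal{R})$ as $\left[\smallmatrix \mathcal{R} & M_{1\times(n-1)}(\mathcal{R})\\ O & \mathcal{T}_{n-1}(\mathcal{R}) \endsmallmatrix\right]$, verifies the hypotheses of Theorem \ref{xxsec4.2} using Corollary \ref{xxsec3.12}, noncommutativity of $\mathcal{T}_{n-1}(\mathcal{R})$, and loyalty of the row module, and then upgrades the resulting Jordan isomorphism to an isomorphism or anti-isomorphism via Wong's theorem. The $n=2$ case is indeed treated separately by a direct computation of exactly the kind you outline (pinning down $\mathfrak{l}(E_{12})$ and $\mathfrak{l}(I)$, normalizing by an inner automorphism, and using that $\mathcal{R}$ is a domain to force $(x-y)^2=1$), though your sketch of that case is much less detailed than the paper's.
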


\begin{proof}
Let us first consider the case of $n=2$. Assume that $\mathfrak{l}\colon \mathcal{T}_2(\mathcal{R}) \longrightarrow \mathcal{T}_2(\mathcal{R})$ is a Lie triple isomorphism.
Denote $E_{ij}$ with $1\leq i\leq j\leq 2$ as the usual matrix unit.
Since $E_{12}=[E_{11}, [E_{11}, E_{12}]]$, we have $\mathfrak{l}(E_{12})=rE_{12}$ for some invertible element $r\in \mathcal{R}^*$. Note that
$[[\mathfrak{l}(I), \mathfrak{l}(X)], \mathfrak{l}(Y)]=0$ for all $X, Y\in \mathcal{T}_2(\mathcal{R})$, which implies that
$[\mathfrak{l}(I), \mathfrak{l}(X)]\in {\mathcal{Z}}(\mathcal{T}_2(\mathcal{R}))=\mathcal{R}I$. Hence
$\mathfrak{l}(I)\in \mathcal{R}I$.
 
We assert that there exists a linear mapping $\mathfrak{g}$ from the diagonal subalgebra $\mathcal{D}_2$ into itself and a scalar $s\in \mathcal{R}$ such that
$$
\mathfrak{l}\left(\left[
\begin{array}
[c]{cc}%
a & b\\
0 & c\\
\end{array}
\right]\right)=\mathfrak{g}\left(\left[
\begin{array}
[c]{cc}%
a & 0\\
0 & c\\
\end{array}
\right]\right)+\left[
\begin{array}
[c]{cc}%
0 & rb+s(a-c)\\
0 & 0\\
\end{array}
\right]
$$
for all $\left[\smallmatrix a & m\\
0 & b \endsmallmatrix \right]\in \mathcal{T}_2(\mathcal{R})$. In fact, we know that for arbitrary $\left[\smallmatrix a & m\\
0 & b \endsmallmatrix \right]\in \mathcal{T}_2(\mathcal{R})$, there exist $\mathcal{R}$-linear mappings $\mathfrak{f}_i, \mathfrak{g}_i, \mathfrak{h}_i\colon \mathcal{R}\longrightarrow \mathcal{R}(i=1,2,3)$ such that
$$
\mathfrak{l}\left(\left[
\begin{array}
[c]{cc}%
a & b\\
0 & c\\
\end{array}
\right]\right)=\left(\left[
\begin{array}
[c]{cc}%
\mathfrak{f}_1(a)+\mathfrak{f}_2(b)+\mathfrak{f}_3(c) & \mathfrak{g}_1(a)+\mathfrak{g}_2(b)+\mathfrak{g}_3(c)\\
0 & \mathfrak{h}_1(a)+\mathfrak{h}_2(b)+\mathfrak{h}_3(c)\\
\end{array}
\right]\right).
$$
Since $\mathfrak{l}(E_{12})=rE_{12}$, we have $\mathfrak{f}_2(b)=0=\mathfrak{h}_2(b)$. Thus
$$
\mathfrak{l}\left(\left[
\begin{array}
[c]{cc}%
a & b\\
0 & c\\
\end{array}
\right]\right)=\left(\left[
\begin{array}
[c]{cc}%
\mathfrak{f}_1(a)+\mathfrak{f}_3(c) & rb+\mathfrak{g}_1(a)+\mathfrak{g}_3(c)\\
0 & \mathfrak{h}_1(a)+\mathfrak{h}_3(c)\\
\end{array}
\right]\right).
$$
Note that $\mathfrak{g}_1(a)=a\mathfrak{g}_1(1)$ and $ \mathfrak{g}_3(c)=c\mathfrak{g}_3(1)$. 
On the other hand, it follows from the fact $\mathfrak{l}(I)\in \mathcal{R}I$ that  $\mathfrak{g}_1(1)+\mathfrak{g}_3(1)=0$. 
Let $\mathfrak{g}_1(1)=s$ and then the above arguments imply our assertion. Let us write $S=\left[\smallmatrix
r & -s\\
0 & 1 \endsmallmatrix \right]$. Then $S^{-1}=\left[\smallmatrix
r^{-1} & sr^{-1}\\
0 & 1 \endsmallmatrix \right]$. 
We define $\mathfrak{j}(T)=\mathfrak{l}(S^{-1}TS)$ for all $T\in \mathcal{T}_2(\mathcal{R})$. Then $\mathfrak{j}$ is a Lie triple isomorphism from $\mathcal{T}_2(\mathcal{R})$ into itself and
$$
\mathfrak{j}\left(\left[
\begin{array}
[c]{cc}%
a & b\\
0 & c\\
\end{array}
\right]\right)=\mathfrak{g}\left(\left[
\begin{array}
[c]{cc}%
a & 0\\
0 & c\\
\end{array}
\right]\right)+\left[
\begin{array}
[c]{cc}%
0 & b\\
0 & 0\\
\end{array}
\right].
$$
This implies that $\mathfrak{j}(E_{12})=E_{12},\ \mathfrak{j}|_{{\mathcal D}_2}=\mathfrak{g}$. 
Note that $\mathfrak{j}$ is obtained by $\mathfrak{l}$ composed with an inner automorphism. Therefore we only to
prove the triple isomorphism $\mathfrak{j}$ is of the standard form.
Suppose that $\mathfrak{j}(E_{11})=\left[\smallmatrix
x & 0\\
0 & y \endsmallmatrix \right]$. By $E_{12}=[\mathfrak{j}(E_{11}), [\mathfrak{j}(E_{11}), E_{12}]]$
it follows that $(x-y)^2=1$. Since $\mathcal{R}$ is a domain, we obtain $x-y=\pm 1$.

{\bf Case 1.} If $x-y=1$, then $\mathfrak{j}(E_{11})=E_{11}+yI$ and $\mathfrak{j}(E_{22})=\mathfrak{j}(I)-\mathfrak{j}(E_{11})=E_{22}+zI$
for some $z\in \mathcal{R}$. It is easy to verify that ${\rm det}(\mathfrak{j})=1+y+z\in \mathcal{R}^*$ as $\mathfrak{j}$ is bijective. In view of \cite[Page 103]{Dokovic}, $\mathfrak{j}$ is of the standard form.

{\bf Case 2.} When $x-y=-1$, note that $-\mathfrak{j}$ is also a triple isomorphism. Define $\mathfrak{t}(T)=\mathfrak{-j}(U^{-1}TU)$ for all $T\in \mathcal{T}_2(\mathcal{R})$, where
$U=\left[\smallmatrix
-1 & 0\\
0 & 1 \endsmallmatrix \right]$. Then
$$
\mathfrak{t}\left(\left[
\begin{array}
[c]{cc}%
a & b\\
0 & c\\
\end{array}
\right]\right)=\mathfrak{g}\left(\left[
\begin{array}
[c]{cc}%
-a & 0\\
0 & -c\\
\end{array}
\right]\right)+\left[
\begin{array}
[c]{cc}%
0 & b\\
0 & 0\\
\end{array}
\right].
$$
This implies that $\mathfrak{t}(E_{12})=E_{12},\ \mathfrak{t}|_{{\mathcal D}_2}=\mathfrak{-g}$.
Moreover, $\mathfrak{t}(E_{11})=\mathfrak{-j}(E_{11})=\left[\smallmatrix
-x & 0\\
0 & -y \endsmallmatrix \right]$. This means that $\mathfrak{t}$ satisfies the assumption of Case 1. Therefore $\mathfrak{t}$
and hence $\mathfrak{j}$ is of the standard form.

Suppose that $n >2$. We may write
$$
\mathcal{T}=\left[
\begin{array}
[c]{cc}%
\mathcal{R} & M_{1\times (n-1)}(\mathcal{R})\\
O & \mathcal{T}_{n-1}(\mathcal{R})\\
\end{array}
\right]
$$
By Corollary \ref{xxsec3.12} each centralizing trace of a bilinear mapping on $\mathcal{T}_n(\mathcal{R})$ is proper. Moreover, $\mathcal{T}_n(\mathcal{R})$ is commutative and $M_{1\times (n-1)}(\mathcal{R})$ is a loyal $(\mathcal{R}, \mathcal{T}_{n-1}(\mathcal{R}))$-bimodule. Thus the assumptions $(1)-(3)$ of Theorem \ref{xxsec4.2} hold in this case. Applying Theorem \ref{xxsec4.2} and \cite[Theorem 3.2]{Wong} yields the conclusion.
\end{proof}

In view of Proposition \ref{xxsec4.3} and \cite[Theorem 3.3]{Wong} we can show

\begin{corollary}\label{xxsec4.6}
Let $\mathcal{N}$ and $\mathcal{N^\prime}$ be nests on a Hilbert space $\textbf{\rm H}$, ${\mathcal Alg}(\mathcal{N})$ and  ${\mathcal Alg}(\mathcal{N^\prime})$ be the nest algebras associated with $\mathcal{N}$
and $\mathcal{N^\prime}$, respectively. If ${\mathfrak l}\colon {\mathcal Alg}(\mathcal{N})\longrightarrow {\mathcal Alg}(\mathcal{N^\prime})$ is a Lie triple isomorphism, then $\mathfrak{l}=\pm\mathfrak{m}+\mathfrak{n}$, where
$\mathfrak{m}\colon {\mathcal Alg}(\mathcal{N})\longrightarrow {\mathcal Alg}(\mathcal{N^\prime})$ is an isomorphism or an anti-isomorphism and $\mathfrak{n}\colon
{\mathcal Alg}(\mathcal{N})\longrightarrow \mathbb{C}1^\prime$ is a linear
mapping vanishing on each second commutator.
\end{corollary}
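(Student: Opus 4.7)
The plan is to deduce Corollary \ref{xxsec4.6} by combining Proposition \ref{xxsec4.3} (applied over the field $\mathbb{C}$) with Wong's structure theorem for Jordan isomorphisms of nest algebras, together with Corollary \ref{xxsec3.13} which already handles the centralizing-trace hypothesis.

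First I would set the ground field to be $F=\mathbb{C}$ and note that any nest algebra $\mathcal{Alg}(\mathcal{N})$ on a Hilbert space is unital and central over $\mathbb{C}$, its centre being $\mathbb{C}I$. In the degenerate case $\dim\mathbf{H}=1$ or $\mathcal{N}=\{0,\mathbf{H}\}$ with $\mathbf{H}$ trivial the statement is vacuous, so I may assume that the nests are not totally trivial; in particular each $\mathcal{Alg}(\mathcal{N})$ contains a nontrivial proper projection $N$ and the corner decomposition $N\mathcal{Alg}(\mathcal{N})N + N\mathcal{Alg}(\mathcal{N})N^{\perp} + N^{\perp}\mathcal{Alg}(\mathcal{N})N^{\perp}$ realises $\mathcal{Alg}(\mathcal{N})$ as a triangular algebra with a loyal off-diagonal bimodule.

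Next I would verify the three hypotheses of Proposition \ref{xxsec4.3}. Hypothesis~(1), that every centralizing trace of a bilinear mapping on $\mathcal{Alg}(\mathcal{N}^\prime)$ is proper, is precisely Corollary \ref{xxsec3.13}. For hypothesis~(2), the polynomial identity $[[x^2,y],[x,y]]$ is not satisfied: taking $x=N+E$ and $y=F$ where $E,F$ are rank-one operators between $N\mathbf{H}$ and $N^\perp\mathbf{H}$ (or within one of the corners, if it is infinite dimensional) produces a non-zero value, by a direct computation analogous to \cite[Lemma 2.7]{BenkovicEremita1} which was already invoked in the proof of Theorem \ref{xxsec4.2}. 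For hypothesis~(3), the centre being $\mathbb{C}I$ together with the existence of non-central commutators (for instance $[N,E]=E\neq 0$ for $E$ as above) shows that $[x,[y,w]]$ is not a polynomial identity of $\mathcal{Alg}(\mathcal{N}^\prime)$. Hence Proposition \ref{xxsec4.3} applies and yields $\mathfrak{l}=\pm\mathfrak{m}+\mathfrak{n}$ with $\mathfrak{m}\colon \mathcal{Alg}(\mathcal{N})\longrightarrow \mathcal{Alg}(\mathcal{N}^\prime)$ a Jordan isomorphism and $\mathfrak{n}\colon \mathcal{Alg}(\mathcal{N})\longrightarrow \mathbb{C}I'$ an $\mathcal{R}$-linear map killing all second commutators.

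Finally I would invoke \cite[Theorem 3.3]{Wong}, which asserts that every Jordan isomorphism between nest algebras is either an algebra isomorphism or an algebra anti-isomorphism, to upgrade $\mathfrak{m}$ to the desired form. The step I expect to require the most care is the verification of the two polynomial-identity hypotheses in Proposition \ref{xxsec4.3}: one must produce explicit operators in $\mathcal{Alg}(\mathcal{N})$ and $\mathcal{Alg}(\mathcal{N}^\prime)$ that witness the failure of $[[x^2,y],[x,y]]$ and $[x,[y,w]]$, which in turn requires ruling out the pathological cases where a nest algebra is commutative (equivalently, every element of the nest has one-dimensional increment); under the standing assumption that both algebras are genuine nest algebras on $\mathbf{H}$, the projections supplied by a nontrivial element of each nest give the required witnesses.
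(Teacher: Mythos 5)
Your overall strategy for the main case --- Corollary \ref{xxsec3.13} for hypothesis (1) of Proposition \ref{xxsec4.3}, explicit operators for the two polynomial-identity hypotheses, then \cite[Theorem 3.3]{Wong} to upgrade the Jordan isomorphism --- is exactly the paper's argument when $\dim_{\mathbb C}\textbf{\rm H}>2$. But there is a genuine gap in low dimensions. If $\dim_{\mathbb C}\textbf{\rm H}=2$ the only possibilities are ${\mathcal Alg}(\mathcal{N})\cong\mathcal{T}_2(\mathbb{C})$ or $\mathcal{M}_2(\mathbb{C})$, and \emph{both} satisfy the identity $[[x^2,y],[x,y]]\equiv 0$: in $\mathcal{T}_2(\mathbb{C})$ every commutator is strictly upper triangular, so any two commutators already commute; in $\mathcal{M}_2(\mathbb{C})$ the Cayley--Hamilton theorem gives $x^2=({\rm tr}\,x)x-(\det x)I$, hence $[x^2,y]=({\rm tr}\,x)[x,y]$ and the double commutator vanishes. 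So hypothesis (2) of Proposition \ref{xxsec4.3} fails there, and your proposed witnesses $x=N+E$, $y=F$ with rank-one $E,F$ do not produce a nonzero value (a direct check in $\mathcal{T}_2$ gives $[x^2,y]=[x,y]$). The paper must and does treat $\dim_{\mathbb C}\textbf{\rm H}=2$ by entirely different means: Corollary \ref{xxsec4.5} for $\mathcal{T}_2(\mathbb{C})$ and \cite[Theorem 3.5]{CalderonGonzalez3} for $\mathcal{M}_2(\mathbb{C})$.

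A second, related problem is your dismissal of the trivial nest $\mathcal{N}=\{0,\textbf{\rm H}\}$ as vacuous. It is not: then ${\mathcal Alg}(\mathcal{N})=\mathcal{B}(\textbf{\rm H})$, which contains no nontrivial nest projection and is not a triangular algebra, yet the corollary still asserts something about it. For $\dim_{\mathbb C}\textbf{\rm H}>2$ this case can still be pushed through Proposition \ref{xxsec4.3} (which only requires a central unital algebra), but your stated reduction to a triangular corner decomposition with a loyal off-diagonal bimodule does not apply to it, so the claim needs to be argued rather than waved away. In short: the proof is fine for $\dim_{\mathbb C}\textbf{\rm H}>2$ with nontrivial nests, but the cases $\dim_{\mathbb C}\textbf{\rm H}\le 2$ and $\mathcal{N}$ trivial require the separate arguments the paper supplies.
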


\begin{proof}
Note that the corollary trivially holds in case ${\rm dim}_{\mathbb C} \textbf{\rm H}=1$ (namely, $\mathfrak{l}={\rm id}+(\mathfrak{l}-{\rm id})$).
If  ${\rm dim}_{\mathbb C} \textbf{\rm H}=2$, we have either ${\mathcal Alg}(\mathcal{N})={\mathcal Alg}(\mathcal{N^\prime})\cong \mathcal{T}_2(\mathbb{C})$ or ${\mathcal Alg}(\mathcal{N})={\mathcal Alg}(\mathcal{N^\prime})\cong \mathcal{M}_2(\mathbb{C})$. Corollary \ref{xxsec4.5}
implies the first case, while the second case follows from \cite[Theorem 3.5]{CalderonGonzalez3}.

Suppose that ${\rm dim}_{\mathbb C} \textbf{\rm H}> 2$. Then each nest algebra is central over $\mathbb{C}$. We assert that the conditions (1)-(3) of Proposition \ref{xxsec4.3} are satisfied in this case. The condition (1) is due to Corollary \ref{xxsec3.13}. While (2) and (3) are due to \cite[Remark 2.13]{BenkovicEremita1}. Applying Proposition \ref{xxsec4.3} and \cite[Theorem 3.3]{Wong} yields the desired result.
\end{proof}

\section{Topics for Further Research}\label{xxsec5}

Although the main purpose of the current article is to study
centralizing traces and Lie triple isomorphisms of triangular
algebras, the structure of centralizing traces and Lie triple
isomorphisms of other associative algebras also has a great interest and draw
more people's our attention. In this section we will present several
potential topics for future further research.

Let us begin with the definition of generalized matrix algebras
given by a Morita context. Let $\mathcal{R}$ be a commutative ring
with identity. A \textit{Morita context} consists of two
$\mathcal{R}$-algebras $A$ and $B$, two bimodules $_AM_B$ and
$_BN_A$, and two bimodule homomorphisms called the pairings
$\Phi_{MN}: M\underset {B}{\otimes} N\longrightarrow A$ and
$\Psi_{NM}: N\underset {A}{\otimes} M\longrightarrow B$ satisfying
the following commutative diagrams:
$$
\xymatrix{ M \underset {B}{\otimes} N \underset{A}{\otimes} M
\ar[rr]^{\hspace{8pt}\Phi_{MN} \otimes I_M} \ar[dd]^{I_M \otimes
\Psi_{NM}} && A
\underset{A}{\otimes} M \ar[dd]^{\cong} \\  &&\\
M \underset{B}{\otimes} B \ar[rr]^{\hspace{10pt}\cong} && M }
\hspace{6pt}{\rm and}\hspace{6pt} \xymatrix{ N \underset
{A}{\otimes} M \underset{B}{\otimes} N
\ar[rr]^{\hspace{8pt}\Psi_{NM}\otimes I_N} \ar[dd]^{I_N\otimes
\Phi_{MN}} && B
\underset{B}{\otimes} N \ar[dd]^{\cong}\\  &&\\
N \underset{A}{\otimes} A \ar[rr]^{\hspace{10pt}\cong} && N
\hspace{2pt}.}
$$
Let us write this Morita context as $(A, B, M, N, \Phi_{MN},
\Psi_{NM})$. We refer the reader to \cite{Morita} for the basic
properties of Morita contexts. If $(A, B, M, N,$ $ \Phi_{MN},
\Psi_{NM})$ is a Morita context, then the set
$$
\left[
\begin{array}
[c]{cc}%
A & M\\
N & B\\
\end{array}
\right]=\left\{ \left[
\begin{array}
[c]{cc}%
a& m\\
n & b\\
\end{array}
\right] \vline a\in A, m\in M, n\in N, b\in B \right\}
$$
form an $\mathcal{R}$-algebra under matrix-like addition and
matrix-like multiplication, where at least one of the two bimodules
$M$ and $N$ is distinct from zero. Such an $\mathcal{R}$-algebra is
usually called a \textit{generalized matrix algebra} of order $2$
and is denoted by
$$
\mathcal{G}=\left[
\begin{array}
[c]{cc}%
A & M\\
N & B\\
\end{array}
\right].
$$
In a similar way, one can define a generalized matrix algebra of
order $n>2$. It was shown that up to isomorphism, arbitrary
generalized matrix algebra of order $n$ $(n\geq 2)$ is a generalized
matrix algebra of order 2 \cite[Example 2.2]{LiWei}. If one of the
modules $M$ and $N$ is zero, then $\mathcal{G}$ exactly degenerates
to an \textit{upper triangular algebra} or a \textit{lower
triangular algebra}. In this case, we denote the resulted upper
triangular algebra (resp. lower triangular algebra) by
$$\mathcal{T^U}=
\left[
\begin{array}
[c]{cc}%
A & M\\
O & B\\
\end{array}
\right]   \hspace{8pt} \left({\rm resp.} \hspace{4pt} \mathcal{T_L}=
\left[
\begin{array}
[c]{cc}%
A & O\\
N & B\\
\end{array}
\right]\right)
$$
Let $\mathcal{M}_n(\mathcal{R})$ be the full matrix algebra
consisting of all $n\times n$ matrices over $\mathcal{R}$. It is
worth to point out that the notion of generalized matrix algebras
efficiently unifies triangular algebras with full matrix algebras
together. The distinguished feature of our systematic work is that
we deal all questions related to (non-)linear mappings of triangular
algebras and full matrix algebras under a unified frame, which is
the admired generalized matrix algebras frame, see \cite{DuWang, LiWei,
LiWykWei, WangWang, XiaoWei1, XiaoWei2}.

Let $\mathcal{T}=\left[\smallmatrix
A & M\\
O & B \endsmallmatrix \right]$ be a $2$-torsion free triangular algebra over commutative
ring $\mathcal{R}$ and ${\mathfrak q}\colon \mathcal{T}\times
\mathcal{T}\longrightarrow \mathcal{T}$ be an $\mathcal{R}$-bilinear
mapping. Theorem \ref{xxsec3.4} shows that under some mild conditions,
every centralizing trace ${\mathfrak T}_{\mathfrak q}:
\mathcal{T}\longrightarrow \mathcal{T}$ of ${\mathfrak q}$ has
the proper form. As you see in the proof of this theorem, one of the most
key steps is that every centralizing trace ${\mathfrak T}_{\mathfrak q}:
\mathcal{T}\longrightarrow \mathcal{T}$ of ${\mathfrak q}$ is commuting.
Bre\v{s}ar in \cite{Bresar1} proved that in certain rings, in
particular, prime rings of characteristic different from $2$ and $3$, every centralizing
trace of arbitrary bilinear mapping is commuting. It is natural to ask the following question

\begin{question}\label{xxsec5.1}
Let $\mathcal{G}=\left[\smallmatrix
A & M\\
N & B \endsmallmatrix \right]$ be a generalized matrix algebra over $\mathcal{R}$ and ${\mathfrak q}\colon \mathcal{G}\times
\mathcal{G}\longrightarrow \mathcal{G}$ be an $\mathcal{R}$-bilinear
mapping. Under what conditions, every centralizing trace ${\mathfrak G}_{\mathfrak q}:
\mathcal{G}\longrightarrow \mathcal{G}$ of ${\mathfrak q}$ has
the proper form ?
\end{question}

Calder\'{o}n Mart\'{i}n and Mart\'{i}n Gonz\'{a}lez in \cite{CalderonGonzalez3} gave a characterization of Lie triple automorphisms of full matrix algebras over complex field $\mathbb{C}$. Let $\mathfrak{l}\colon \mathcal{M}_n(\mathbb{C})\longrightarrow  \mathcal{M}_n(\mathbb{C})(n>1)$ be a Lie triple automorphism. Then there exists an automorphism, an anti-automorphism, the
negative of an automorphism or the negative of an anti-automorphism $\mathfrak{m}\colon \mathcal{M}_n(\mathbb{C})\longrightarrow \mathcal{M}_n(\mathbb{C})$ such that $\mathfrak{n}=\mathfrak{l}-\mathfrak{m}$ is a linear mapping from $\mathcal{M}_n(\mathbb{C})$ onto its
center sending all second commutators to zero. In light of this result and our Theorem \ref{xxsec4.4} we propose

\begin{conjecture}\label{xxsec5.2}
Let $\mathcal{G}=\left[\smallmatrix
A & M\\
N & B \endsmallmatrix \right]$ and $\mathcal{G}^\prime=\left[\smallmatrix
A^\prime & M^\prime\\
N^\prime & B^\prime \endsmallmatrix \right]$ be generalized matrix algebras over $\mathcal{R}$ with
$1/2\in\mathcal{R}$. Let $\mathfrak{l}\colon
\mathcal{G}\longrightarrow\mathcal{G'}$ be a Lie triple isomorphism.
If
\begin{enumerate}
\item[{\rm(1)}] each commuting linear mapping on $A'$ or $B'$ is proper,
\item[{\rm(2)}] $\pi_{A'}(\mathcal{Z(G^\prime)})={\mathcal Z}(A')\neq A'$ and
$\pi_{B'}(\mathcal{Z(G^\prime)})={\mathcal Z}(B')\neq B'$,
\item[{\rm(3)}] either $A$ or $B$ is noncommutative,
\item[{\rm(4)}] $M'$ is loyal,
\end{enumerate}
then $\mathfrak{l}=\pm \mathfrak{m}+\mathfrak{n}$, where
$\mathfrak{m}\colon \mathcal{G}\rightarrow \mathcal{G}^\prime$ is a Jordan
homomorphism, $\mathfrak{m}$ is one-to-one, and $\mathfrak{n}\colon
\mathcal{G}\longrightarrow \mathcal{Z(G^\prime)}$ is a linear
mapping vanishing on each second commutator. Moreover, if
$\mathcal{G^\prime}$ is central over $\mathcal{R}$, then
$\mathfrak{m}$ is surjective.
\end{conjecture}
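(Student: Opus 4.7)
The plan is to mimic the two-step strategy behind Theorem \ref{xxsec4.4}. First, one establishes that, under hypotheses (1)--(4) of the conjecture, every centralizing trace of an $\mathcal{R}$-bilinear mapping $\mathfrak{q}\colon\mathcal{G}'\times\mathcal{G}'\to\mathcal{G}'$ is proper; this is precisely an affirmative answer to Question \ref{xxsec5.1} in the setting imposed (and is where hypothesis (1) on commuting linear mappings of $A'$ or $B'$ will enter). Second, one adapts the proof of Theorem \ref{xxsec4.2} to produce the decomposition $\mathfrak{l}=\pm\mathfrak{m}+\mathfrak{n}$, invoking this properness result in place of hypothesis (1) of Theorem \ref{xxsec4.2}.

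For step one, I would decompose $\mathcal{G}'=A'_{11}\oplus A'_{12}\oplus A'_{21}\oplus A'_{22}$ via the Peirce decomposition associated with the two idempotents, and, following Lemmas \ref{xxsec3.6}--\ref{xxsec3.10}, expand an arbitrary trace $\mathfrak{T}_\mathfrak{q}$ as a sum of bilinear pieces $f_{IJ,KL}\colon A'_{IJ}\times A'_{KL}\to A'_{PQ}$. The centralizing identity analogous to $(3.1)$ then splits into a much larger system of multilinear identities. The systematic use of Lemma \ref{xxsec3.1}, together with the loyalty of $M'$ and a dual version of Lemma \ref{xxsec3.1} suitable for $N'$, should allow one to isolate the diagonal and off-diagonal contributions and ultimately to reduce the centralizing trace to a commuting one, at which point one can invoke (or extend) the machinery of \cite[Theorem 3.1]{BenkovicEremita1} together with the generalized matrix algebra analysis of \cite{XiaoWei1, XiaoWei2} to conclude properness.

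For step two, the proof of Theorem \ref{xxsec4.2} transfers almost verbatim once properness of centralizing traces is available. The auxiliary facts that need to be re-proved in the generalized matrix setting are: an analogue of Lemma \ref{xxsec4.1} showing that $\mathcal{G}'$ contains no nonzero central Jordan ideals, which uses $\tfrac{1}{2}\in\mathcal{R}$ together with the Jordan product $\alpha\circ m=\alpha m+m\tau(\alpha)$ applied to elements of both $M'$ and $N'$; the selection of $a_1,a_2\in A'$ with $a_1[a_1,a_2]a_1\neq 0$ as in \cite[Lemma 2.7]{BenkovicEremita1} realised inside $\mathcal{G}'$ via the canonical embedding $A'\hookrightarrow A'_{11}$, combined with loyalty of $M'$ to force $\pi_{A'}(\lambda^{3}\varepsilon(x_0,x_0))=0$; and the scalar identification $\lambda=\pm 1$ via the Lie triple identity together with the already established Jordan-homomorphism property of $\mathfrak{m}$. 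The onto part in the central case requires only that $\mathfrak{m}(1_{\mathcal{G}})=1_{\mathcal{G}'}$, whose verification is identical to the triangular case once the analogue of Lemma \ref{xxsec3.2} is available for $\mathcal{G}'$.

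The main obstacle is unquestionably step one. In the triangular situation, the vanishing of the $A'_{21}$-component collapses most cross-terms in the centralizing identity and enables the efficient cascade of Lemmas \ref{xxsec3.6}--\ref{xxsec3.10}. When $N'\neq 0$, the analogue of $(3.4)$ acquires substantial additional summands, and the decisive identities $(3.13)$, $(3.14)$, $(3.23)$ and $(3.24)$ all pick up unwanted $N'$-contributions that must be controlled simultaneously with those coming from $M'$. A successful treatment will likely require a bilateral loyalty hypothesis (loyalty of both $M'$ and $N'$, or a joint Morita-loyalty condition on the pairings $\Phi_{M'N'}$ and $\Psi_{N'M'}$), which may in turn force a strengthening of hypothesis (4) in the conjecture; whether such a strengthening can be avoided altogether is, in my view, the key difficulty separating Conjecture \ref{xxsec5.2} from its triangular counterpart.
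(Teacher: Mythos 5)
You should note first that the statement you are proving is labelled a \emph{conjecture} in the paper: the authors offer no proof of it, and indeed they isolate its crux as the open Question \ref{xxsec5.1}. Your proposal does not close that gap. Step one of your plan --- properness of centralizing traces of bilinear mappings on a generalized matrix algebra --- \emph{is} Question \ref{xxsec5.1}, and your sketch of it (Peirce decomposition into four corners, a larger system of identities, ``systematic use of Lemma \ref{xxsec3.1} and a dual version for $N'$'') is a description of what would need to be done rather than an argument that it can be done. You yourself concede that the decisive identities $(3.13)$, $(3.14)$, $(3.23)$, $(3.24)$ acquire uncontrolled $N'$-terms and that a bilateral loyalty hypothesis may be forced, i.e.\ that the conjecture as stated (with only $M'$ loyal) may not even be provable by this route. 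That is an honest assessment, but it means the proposal is a reduction of one open problem to another, not a proof.

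Even granting step one, step two does not transfer quite as verbatim as you claim. The proof of Theorem \ref{xxsec4.2} uses Lemma \ref{xxsec3.3} (the center of a triangular algebra with loyal $M$ is a domain) to pass from $\lambda^{3}\varepsilon(x_{0},x_{0})=0$ to $\varepsilon(x_{0},x_{0})=0$ and later from $\lambda^{3}(\lambda^{2}-1)=0$ to $\lambda=\pm 1$; you would need to verify that $\mathcal{Z(G')}$ is still a domain under loyalty of $M'$ alone when $N'\neq 0$, and you do not address this. Likewise, the computations in which one commutes $(4.9)$ and $(4.10)$ with $u_{0}$ and then with $[\mathfrak{m}(x_{0}),u_{0}]$ rely on the $(2,1)$-corner of every element being zero, so that the resulting element is strictly upper triangular and loyalty of $M'$ kills it; with $N'\neq 0$ the commutators produce nonzero $(2,1)$-entries that loyalty of $M'$ does not control. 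So the concrete missing ideas are: (i) a proof (or refutation) of Question \ref{xxsec5.1} under hypotheses (1)--(4), and (ii) analogues of Lemmas \ref{xxsec3.2}, \ref{xxsec3.3} and of the $u_{0}$-commutator extraction argument that remain valid in the presence of a nonzero $N'$. Until these are supplied, what you have is a plausible programme, consistent with the authors' own stated expectations, but not a proof.
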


More recently, some researchers extend the result about Lie isomorphisms between nest algebras on Hilbert spaces by Marcoux and
Sourour \cite{MarcouxSourour2} to the Banach space case, see \cite{QiHou2} and \cite{WangLu}. Therefore it is deserved to pay much more attention to centralizing traces and Lie triple isomorphisms of nest algebras on Banach spaces.

Basing on Corollary \ref{xxsec3.13} we have the following question.

\begin{question}\label{xxsec5.3}
Let $\textbf{\rm X}$ be a Banach space, $\mathcal{N}$ be a nest of $\textbf{\rm X}$ and ${\mathcal Alg}(\mathcal{N})$ be the nest algebra associated with $\mathcal{N}$. Suppose that ${\mathfrak q}\colon {\mathcal Alg}(\mathcal{N})\times
{\mathcal Alg}(\mathcal{N})\longrightarrow {\mathcal Alg}(\mathcal{N})$ is an $\mathcal{R}$-bilinear
mapping. Then every centralizing trace ${\mathfrak T}_{\mathfrak q}:
{\mathcal Alg}(\mathcal{N})\longrightarrow {\mathcal Alg}(\mathcal{N})$ of ${\mathfrak q}$ is
proper.
\end{question}

Furthermore, similiar to Corollary \ref{xxsec4.6} we conjecture

\begin{conjecture}\label{xxsec5.4}
Let $\mathcal{N}$ and $\mathcal{N^\prime}$ be nests on a Banach space $\textbf{\rm X}$, ${\mathcal Alg}(\mathcal{N})$ and  ${\mathcal Alg}(\mathcal{N^\prime})$ be the nest algebras associated with $\mathcal{N}$
and $\mathcal{N^\prime}$, respectively. If ${\mathfrak l}\colon {\mathcal Alg}(\mathcal{N})\longrightarrow {\mathcal Alg}(\mathcal{N^\prime})$ is a Lie triple isomorphism, then $\mathfrak{l}=\pm\mathfrak{m}+\mathfrak{n}$, where
$\mathfrak{m}\colon {\mathcal Alg}(\mathcal{N})\longrightarrow {\mathcal Alg}(\mathcal{N^\prime})$ is an isomorphism or an anti-isomorphism and $\mathfrak{n}\colon
{\mathcal Alg}(\mathcal{N})\longrightarrow \mathbb{C}1^\prime$ is a linear
mapping vanishing on each second commutator.
\end{conjecture}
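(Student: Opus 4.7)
The plan is to reduce Conjecture \ref{xxsec5.4} to the structural machinery already developed, in exact parallel with the Hilbert-space argument given in Corollary \ref{xxsec4.6}, but with the two essentially analytic ingredients replaced by their Banach-space analogues. First, dispose of the degenerate cases: if $\dim_{\mathbb{C}} \textbf{\rm X}\le 2$, then $\mathcal{Alg}(\mathcal{N})$ is either $\mathbb{C}$, $\mathcal{T}_2(\mathbb{C})$, or $\mathcal{M}_2(\mathbb{C})$, and the conclusion follows from the trivial identity decomposition, Corollary \ref{xxsec4.5}, and \cite[Theorem 3.5]{CalderonGonzalez3}, respectively. So assume $\dim_{\mathbb{C}} \textbf{\rm X}>2$, in which case every nest algebra in question is central over $\mathbb{C}$.

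The next step is to realize each $\mathcal{Alg}(\mathcal{N})$ as a triangular algebra in the sense of Section \ref{xxsec2}. For this, one picks a nontrivial element $N\in \mathcal{N}$ which is topologically complemented in $\textbf{\rm X}$; writing $\textbf{\rm X}=N\oplus N^c$ and letting $P$ denote the associated projection, the corner decomposition gives
\[
\mathcal{Alg}(\mathcal{N})=\left[
\begin{array}{cc}
A & M\\
O & B
\end{array}\right],
\]
where $A=P\,\mathcal{Alg}(\mathcal{N})P$, $B=(I-P)\,\mathcal{Alg}(\mathcal{N})(I-P)$ are themselves nest algebras on the complementary factors, and $M=P\,\mathcal{Alg}(\mathcal{N})(I-P)$. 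For $\dim_\mathbb{C}\textbf{\rm X}>2$ one can arrange that $A$ or $B$ is noncommutative, the centers coincide with $\mathbb{C}$ times the identity of each corner, and $M$ is loyal because $\mathcal{B}(N^c,N)$ contains enough rank-one operators to separate nonzero elements on either side.

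Once $\mathcal{Alg}(\mathcal{N}^\prime)$ has been placed in this framework, one would invoke a resolution of Question \ref{xxsec5.3} to know that every centralizing trace of a bilinear mapping on $\mathcal{Alg}(\mathcal{N}^\prime)$ is proper, and then apply Proposition \ref{xxsec4.3} (whose polynomial identity hypotheses are easily verified on a nest algebra of sufficient dimension) to obtain a decomposition $\mathfrak{l}=\pm\mathfrak{m}+\mathfrak{n}$ with $\mathfrak{m}$ a Jordan isomorphism and $\mathfrak{n}$ vanishing on second commutators. The final step is to invoke a Banach-space analogue of \cite[Theorem 3.3]{Wong}, asserting that every Jordan isomorphism between nest algebras on Banach spaces is an (algebra) isomorphism or an anti-isomorphism; this yields the desired standard form.

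The main obstacle is thus two-fold. The harder of the two is the affirmative answer to Question \ref{xxsec5.3}: the Hilbert-space proof uses Corollary \ref{xxsec3.13}, which rests on the triangular decomposition, the loyalty of $M$, and the fact that each commuting linear mapping on a nest algebra on a Hilbert space is proper. Transferring the last ingredient to Banach space nest algebras is delicate, since the standard proofs use rank-one operators and commutant arguments that are cleanest in Hilbert space. The second obstacle is the Jordan-isomorphism classification, which for Hilbert space nest algebras is by now standard but for Banach space nest algebras requires extending Wong's indecomposability argument; here the existence of the complemented element $N\in \mathcal{N}$ chosen above is what supplies the idempotent structure needed. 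Once both ingredients are in place, the splicing into the framework of Theorem \ref{xxsec4.4} / Proposition \ref{xxsec4.3} is essentially formal and mirrors Corollary \ref{xxsec4.6} verbatim.
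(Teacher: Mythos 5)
The statement you are trying to prove is stated in the paper as Conjecture \ref{xxsec5.4}; the paper offers no proof of it, and indeed poses Question \ref{xxsec5.3} precisely because the authors do not know how to supply the key ingredient. Your proposal is therefore not comparable to a proof in the paper, and, more importantly, it is not a proof at all: it is a conditional reduction. You yourself flag the two load-bearing gaps, namely (i) an affirmative resolution of Question \ref{xxsec5.3} (properness of centralizing traces of bilinear mappings on Banach-space nest algebras, which in the Hilbert-space case rests on Corollary \ref{xxsec3.13} and ultimately on the properness of commuting linear maps on nest algebras), and (ii) a Banach-space analogue of \cite[Theorem 3.3]{Wong} classifying Jordan isomorphisms of such algebras. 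Until both are established, the argument does not close; splicing Proposition \ref{xxsec4.3} between two unproven statements yields a conjecture, not a theorem.

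There is also a structural gap earlier in your reduction that you pass over too quickly. Your triangular decomposition requires a nontrivial element $N\in\mathcal{N}$ that is topologically complemented in $\textbf{\rm X}$. In a Hilbert space every closed subspace is complemented, so the corner decomposition of Corollary \ref{xxsec4.6} is automatic; in a general Banach space a closed subspace need not be complemented, and a nest may contain no complemented nontrivial element at all (and for the trivial nest $\mathcal{N}=\{0,\textbf{\rm X}\}$ there is no nontrivial element to choose, with $\mathcal{Alg}(\mathcal{N})=\mathcal{B}(\textbf{\rm X})$ not triangular). So even the first step of placing $\mathcal{Alg}(\mathcal{N}')$ into the framework of Section \ref{xxsec2} is not available in the stated generality. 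This is precisely the kind of obstruction that makes the Banach-space case a genuine open problem rather than a routine transcription of the Hilbert-space argument.
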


\bigskip


\end{document}